\newtheorem{theorem}{Theorem}[section]
\theoremstyle{definition}
\newtheorem{definition}[theorem]{Definition}
\newtheorem{example}[theorem]{Example}
\newtheorem{proposition}[theorem]{Proposition}
\newtheorem{corollary}[theorem]{Corollary}
\theoremstyle{remark}
\newtheorem{remark}[theorem]{Remark}
\numberwithin{equation}{section}
\begin{document}

\title{$\phi$-$\delta$-$S$-primary hyperideals }

\author{Mahdi Anbarloei}
\address{Department of Mathematics, Faculty of Sciences,
Imam Khomeini International University, Qazvin, Iran.
}

\email{m.anbarloei@sci.ikiu.ac.ir}


\subjclass[2020]{  16Y20, 16Y99, 20N15   }


\keywords{  $\phi$-$\delta$-$S$-primary hyperideals, strongly $\phi$-$\delta$-$S$-primary hyperideals, $\phi$-$S$-primary hyperideals, $\delta$-$S$-primary hyperideals.}

\begin{abstract}
Among many generalizations of primary hyperideals, weakly $n$-ary primary hyperideals and  $n$-ary $S$-primary hyperideals have been studied recently. Let $S$ be an $n$-ary multiplicative set of a commutative Krasner $(m,n)$-hyperring $K$ and,  $\phi$ and $\delta$ be reduction and expansion functions of hyperideals of $K$, respectively.  The purpose of this paper is to introduce $n$-ary $\phi$-$\delta$-$S$-primary hyperideals which serve as an extension of $S$-primary hyperideals with the help of  $\phi$ and $\delta$. We present some main results and examples explaining the sructure of this concept. We examine the relations of $n$-ary $S$-primary hyperideals with other classes of hyperideals and give some ways to connect them.
Moreover, we give some characterizations of this notion on direct product of commutative Krasner $(m, n)$-hyperrings.  
\end{abstract}
\maketitle

\section{Introduction}

Algebraic hyperstructures are a suitable  extension of classical algebraic
structures, allowing for the study of multi-valued operations and their consequences in a variety of mathematical fields. It was  Marty, who firstly introduced hyperstructure theory in 1934, during a presentation at the 8th Scandinavian Mathematical Congress \cite{s1}. Many researchers have investigated various hyperstructures \cite {amer2,  s3, davvaz1, davvaz2, s4, s10, jian}. Basic illustrations and results of $n$-ary hyperstructures can be seen in  \cite{s9, l2, l3,  rev1}.  The notion of Krasner $(m,n)$-hyperrings as an intermediate class of  the $(m,n)$-hyperrings and Krasner hyperrings  was introduced and studied  in \cite{d1}.  This algebraic structure involves an $m$-ary hyperoperation  and an $n$-ary  operation, satisfying certain axioms that generalize those of traditional rings. Some studies  focused on exploring the properties of Krasner $(m,n)$-hyperrings are found in   \cite{ sorc1,asadi,ma,nour,cons}. 
Primary hyperideals plays  a key role in the study of commutative hyperrings, providing a framework for comprehending the structure and properties of hyperideals in relation to prime hyperideals and other algebraic constructs.  The $n$-ary prime and $n$-ary primary hyperideals were studied in context of Krasner $(m,n)$-hyperrings in \cite{sorc1} and \cite{rev2}. Then, in \cite{mah3}, the idea of unifying this concepts in one frame was proposed.  The concep of $n$-ary $S$-primary hyperideals extending  the notion of $n$-ary primary hyperideals  by incorporating multiplicative subsets,  was defined and studied  in \cite{mah6}. Davvaz et al. investigated weakly $(k,n)$-absorbing  primary hyperideals as a generalization of  primary hyperideals in \cite{davvazz}. 
The notion of weakly $S$-primary hyperideals was discussed  in \cite{mah8}. The study often builds on previous work regarding weakly $n$-ary primary hyperideals and other related notions.

 The above mentioned algebraic structures motivated and inspired us to construct a broader extension of $S$-primary hyperideals with the help of reduction and expansion functions. In this paper,  we present the idea of $n$-ary $\phi$-$\delta$-$S$-primary hyperideals in a commutative Krasner $(m,n)$-hyperring $K$ where $\phi$  are $\delta$ are reduction and expansion functions, respectively, and $S$ is a multiplicative subset of $K$. Among many
results in this paper, we prove that if $P$ is an $n$-ary $\phi$-$\delta$-$S$-primary hyperideal of $K$ associated to $s \in S$ such that $rad(\phi(P)) \subseteq \phi(rad(P))$ and $rad(\delta(P)) \subseteq \delta(rad(P))$, then $rad(P)$ is an $n$-ary $\phi$-$\delta$-$S$-primary hyperideal of $K$ associated to $s \in S$, in Theorem \ref{1}. Example \ref{ignor}  shows that the conditions $``rad(\phi(P)) \subseteq \phi(rad(P))"$ and $``rad(\delta(P)) \subseteq \delta(rad(P))"$ in this theorem can not be ignored. In Theorem \ref{8}, we conclude that if $P$ is  a strongly  $n$-ary $\phi$-$\delta$-$S$-primary hyperideal of $K$ associated to $s \in S$ but is not an $n$-ary $\delta$-$S$-primary hyperideal, then $g(P^{(n)}) \subseteq \phi(P)$. However,  Example \ref{madar} indicates that a proper hyperideal $P$ of $K$ may not be $n$-ary $\phi$-$\delta$-$S$-primary when it holds $g(P^{(n)}) \subseteq \phi(P)$. In Theorem \ref{15}, under mentioned condictions, we obtain that  a hyperideal $P$ is an $n$-ary $\phi$-$\delta$-$S$-primary hyperideal of $K$ associated to $s \in S$ if and only if $S^{-1}P$ is an $n$-ary $\phi_S$-$\delta_S$-primary hyperideal of $S^{-1}K$ and $S^{-1}P \cap K =(P : s)$.
Finally, some characterizations of this concept are given on direct product of Krasner $(m,n)$-hyperrings.

\section{Some basic definitions}
In what follow, we recall some  basic terms and definitions  that need in our work. 
Let $K$ be a non-empty set and   $P^*(K)=\{H \ \vert H \subseteq K\}$. A mapping $f$  from $K\times K \times \cdots \times K$  into $ P^*(K)$ where $K$ appears $n$ times, is called an $n$-ary hyperoperation, and $n$ is called
the arity of this hyperoperation. In this case, an algebraic system $(K, f)$ is called an $n$-ary hypergroupoid or an $n$-ary hypersystems. By  using the  notation $u^j_i$ for the sequence $u_i, u_{i+1},\cdots, u_j$,   $f(u_1,\cdots, u_i, v_{i+1},\cdots, v_j, w_{j+1},\cdots, w_n)$ is written as $f(u^i_1, v^j_{i+1}, w^n_{j+1})$.  We write the expression in the form  $f(u^i_1, v^{(j-i)}, w^n_{j+1})$ if $v_{i+1} =\cdots= v_j = v$.  The  symbol $u^j_i$ is empty when $j< i$. Moreover,  $u_1,\cdots,\widehat{u_i},\cdots,\widehat{u_j}, \cdots,u_n$ indicates that $u_i$ and $u_j$ are eliminated from the sequence $u_1^n$. We  define $f(H^n_1)= \bigcup \{f(u^n_1) \ \vert \ u_i \in H_i, i = 1,\cdots, n \}$ where $\varnothing \neq H_i \subseteq K$ for each $i \in \{1,\cdots,n\}$.
\begin{definition}
\cite{d1} A system  $(K, f, g)$, or simply $K$, is called a commutative Krasner $(m, n)$-hyperring with a scalar identity $1_K$ if it has the following propertices:

\begin{itemize} 
\item[\rm{{\bf 1.}}]~ $(K, f$) is a canonical $m$-ary hypergroup, 

\item[\rm{{\bf 2.}}]~ $(K, g)$ is a $n$-ary semigroup, 

\item[\rm{{\bf 3.}}]~ $g(u^{i-1}_1, f(v^m _1 ), u^n _{i+1}) = f(g(u^{i-1}_1, v_1, u^n_{ i+1}),\cdots, g(u^{i-1}_1, v_m, u^n_{ i+1}))$ for each   $i \in \{1,\cdots,n\}$ and $u^{i-1}_1 , u^n_{ i+1}, v^m_ 1 \in K$. 

\item[\rm{{\bf 4.}}]~ $g(0, u^n _2) = g(u_2, 0, u^n _3) = \cdots = g(u^n_ 2, 0) = 0$ for each $u^n_ 2 \in K$, 

\item[\rm{{\bf 5.}}]~ $f(u_1^n) = f(u_{\sigma(1)}^{\sigma(n)})$ for each $ \sigma \in \mathbb{S}_n$, the group of all permutations of $\{1, \cdots, n\}$ and for each $u_1^n \in K$, 

\item[\rm{{\bf 6.}}]~ $g(u,1_K^{(n-2)})=u$ for all $u \in K$.
\end{itemize}
\end{definition}
Throughout this paper, $K$ is  a commutative Krasner $(m,n)$-hyperring with  identity  $1_K$.
\begin{definition}
Assume that $\varnothing \neq P \subseteq K$. $P$ refers to 
\begin{itemize} 
\item[\rm{{\bf i.}}]~  a subhyperring of $K$ if $(P, f, g)$ is a Krasner $(m, n)$-hyperring. 
\item[\rm{{\bf ii.}}]~  a hyperideal of $K$ if $(P, f)$ is an $m$-ary subhypergroup
of $(K, f)$ and $g(u^{i-1}_1, P, u_{i+1}^n) \subseteq P$, for  each $u^n _1 \in K$ and $i \in \{1,\cdots,n\}$.
\end{itemize} 
\end{definition}

\begin{definition} \cite{sorc1} 
For every element $u \in K$, the hyperideal generated by  $u$  is denoted by $\langle a \rangle$ and defined as $\langle u \rangle=g(K,u,1^{(n-2)})=\{g(a,u,1_K^{(n-2)}) \ \vert \ a \in K\}.$
Moreover, we  define $(P:u)=\{a \in K \ \vert \ g(a,u,1_K^{(n-2)}) \in P\}$ for each hyperideal $P$  of $A$. 
\end{definition}

\begin{definition} \cite{sorc1}
An element $u \in K$ is  invertible if there exists $v \in K$ such that $1_K=g(u,v,1_K^{(n-2)})$. 
\end{definition}
\begin{definition}
\cite{sorc1} A proper hyperideal  $P$ of $K$ refers to an  $n$-ary  prime hyperideal  if for hyperideals $P_1^n$ of $K$, $g(P_1^ n) \subseteq P$ implies that $P_i \subseteq P$ for some $i \in \{1,\cdots,n\}$.
\end{definition}
 In Lemma 4.5 in \cite{sorc1}, it was shown that   a proper hyperideal $P$ of $K$ is $n$-ary  prime if  $g(u^n_ 1) \in P$ and  $u^n_ 1 \in K$ imply that $a_i \in P$ for some $i \in \{1,\cdots,n\}$.

\begin{definition} \cite{sorc1} 
For a hyperideal $P$ of $K$, the radical of $P$, denoted by $rad(P)$, is the intersection of  all $n$-ary prime hyperideals $K$  containing   $P$. If $K$ does not have any $n$-ary prime hyperideal containing $P$, we define $rad(P)=K$.
\end{definition}
Theorem 4.23 in \cite{sorc1} indicates   that if $u \in rad(P)$ then 
there exists $r \in \mathbb {N}$ such that $g(u^ {(r)} , 1_K^{(n-r)} ) \in P$ for $r \leq n$, or $g_{(x)} (u^ {(r)} ) \in P$ for $r = x(n-1) + 1$.
\begin{definition}
\cite{sorc1} A proper hyperideal  $P$ of $K$ refers to an  $n$-ary  primary hyperideal  if $g(u^n _1) \in P$ and $u_1^ n \in K$ imply that $u_i \in P$ or $g(u_1^{i-1}, 1_K, u_{ i+1}^n) \in rad(P)$ for some $i \in \{1,\cdots,n\}$.
\end{definition}
 The radical of an $n$-ary primary hyperideal $P$ of $K$ is $n$-ary prime (Theorem 4.28 in \cite{sorc1} ).
\begin{definition} \cite{sorc1}
A  Krasner $(m,n)$-hyperring $K$ is called an $n$-ary hyperintegral domain if $K$ is commutative and $g(u_1^n) =0$ implies that $u_i=0$ for some $i \in \{1,\cdots,n\}$.
\end{definition}
\begin{definition} \cite{sorc1}
Let $\varnothing \neq S \subseteq K$. The set $S$ is called  an  $n$-ary multiplicative set if $g(s_1^n) \in S$ for all $s_1^n \in S$.
\end{definition}

\section{$\phi$-$\delta$-$S$-primary hyperideals}
A function $\phi$ which assigns to each hyperideal $P$ of $K$ a hyperideal $\phi(P)$ of $K$ is called a reduction  function of hyperideals of $K$ if $\phi(P )\subseteq P$  and for each hperideal $Q$ of $K$ with $P \subseteq Q$, we have $\phi(P) \subseteq \phi(Q)$. For instance, the functions $\phi_0, \phi_1, \phi_n$ and $\phi_w$ from the set of all hyperideals of $K$ to the same set, defined by $\phi_0(P)=\{0\}, \phi_1(P)=P,\phi_n(P)=g(P^{(n)})$ and $\phi_w(P)=\cap_{i=1}^{\infty} g(P^{(i)})$ are reduction  functions of hyperideals of $K$.
Moreover, a function $\delta$ which assigns to each hyperideal $P$ of $K$ a hyperideal $\delta(P)$ of $K$ is called an expansion function of hyperideals of $K$ if $P \subseteq \delta(P)$  and for each hperideal $Q$ of $K$ with $P \subseteq Q$, we have $\delta(P) \subseteq \delta(Q)$. For example, the functions $\delta_0, \delta_1, \delta_K$ and $\delta_M$ from the set of all hyperideals of $K$ to the same set, defined by $\delta_0(P)=P, \delta_1(P)=rad(P),\delta_K(P)=K$ and $\delta_M(P)=\cap_{P \subseteq Q \in Max(K)}Q$ are  expansion  functions of hyperideals of $K$ \cite{mah3}. Now, we introduce the notion of $n$-ary $\phi$-$\delta$-$S$-primary hyperideals extending $S$-primary  hyperideals into a  general framework with the help of the reduction and expansion function. The following
definition constitutes the $\phi$-$\delta$-version of $S$-primary hyperideals.
\begin{definition}
Let $S$ be an $n$-ary multiplicative subset of $K$ and $P$ be a proper hyperideal of  $K$ disjoint with $S$.  Assume that $\phi$ and $\delta$ are reduction and expansion functions of hyperideals of $K$, respectively.
 $P$ is called an $n$-ary  $\phi$-$\delta$-$S$-primary hyperideal of $K$ associated to  $s \in S$, if whenever $g(u_1^n) \in P \backslash \phi(P)$, then $g(u_i,s,1_K^{(n-2)}) \in P$ or $g(u_1^{i-1},s,u_{i+1}^n) \in \delta(P)$ for all $u_1^n \in K$.
\end{definition}
\begin{example} 
Consider the Krasner $(2,2)$-hyperring $K=\{0,1,u\}$ with the hyperaddition and multiplication defined by
$\vspace{0.5cm}$

$\hspace{3cm}$
$\begin{tabular}{|c||c|c|c|} 
\hline $+$ & $0$ & $1$ & $u$
\\ \hline \hline $0$ & $0$ & $1$ & $x$
\\ \hline $1$ & $1$ & $K$ & $1$ 
\\ \hline $u$ & $u$ & $1$ & $\{0,u\}$ 
\\ \hline
\end{tabular}$
$\hspace{0.5cm}$
$\begin{tabular}{|c||c|c|c|} 
\hline $\cdot$ & $0$ & $1$ & $u$
\\ \hline \hline $0$ & $0$ & $0$ & $0$
\\ \hline $1$ & $0$ & $1$ & $u$ 
\\ \hline $u$ & $0$ & $u$ & $0$ 
\\ \hline
\end{tabular}$
$\vspace{0.5cm}$

 Then the hyperideal $P=\{0,u\}$ is a $2$-ary  $\phi_2$-$\delta_0$-$\{0\}$-primary  hyperideal of $K$.
\end{example}

\begin{example} \label{exa2}
Consider the Krasner $(2,3)$-hyperring $(K=[0,1],\oplus,g )$ where the 2-ary hyperoperation $``\oplus"$ is defined as $u \oplus v=\{\max\{u, v\}\}$ if $u \neq v$ or $[0,u]$ if $u =v.$ 
Also, $``g"$ is the usual multiplication on real numbers. Assume that $S=\{1\}$. Then the hyperideal $P=[0,0.5]$ is a $3$-ary  $\phi_w$-$\delta_1$-$S$-primary hyperideal of $K$ but it is not $3$-ary $\phi_w$-$\delta_0$-$S$-primary since $g(0.6,0.7,0.8) \in P \backslash \phi_w(P)$ and $0.6,0.7,0.8 \notin P$. Now, let $T=(0,0.1]$. Then $P$ is a $3$-ary $\phi_w$-$\delta_0$-$T$-primary.

\end{example}
\begin{remark}
Every $\delta$-primary  hyperideal of $K$ is a $\phi$-$\delta$-$S$-primary  hyperideal. 
\end{remark}
The following example shows that the converse does not necessarily hold.
\begin{example}
Assume that $K$ is the Krasner $(2,4)$-hyperring given in Example 4.8 in \cite{sorc1}. In \cite{davvazz}, it was shown that $\langle 0 \rangle$ is not a $\delta_0$-primary hyperideal of $K$. Now, let $S=\{2,3\}$. 
Then $\langle 0 \rangle$ is a $\phi_4$-$\delta_0$-$S$-primary hyperideal of $K$. 
\end{example}

In the sequel, we assume that $\phi$ and $\delta$ are reduction and expansion functions of hyperideals of $K$, respectively, and $S$ is an $n$-ary multiplicative subset of $K$. 
\begin{theorem} \label{1}
Let $P$ be a proper hyperideal of $K$. If $P$ is an $n$-ary $\phi$-$\delta$-$S$-primary hyperideal of $K$ associated to $s \in S$ such that $rad(\phi(P)) \subseteq \phi(rad(P))$ and $rad(\delta(P)) \subseteq \delta(rad(P))$, then $rad(P)$ is an $n$-ary $\phi$-$\delta$-$S$-primary hyperideal of $K$ associated to $s \in S$.
\end{theorem}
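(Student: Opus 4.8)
The plan is to treat this as the $\phi$-$\delta$-$S$ analogue of the classical fact that the radical of a primary ideal is prime. Before checking the defining implication, I would first confirm that $rad(P)$ even qualifies as a candidate, that is, that it is proper and disjoint from $S$. Disjointness is the key point: if some $s' \in rad(P)\cap S$, then by Theorem 4.23 in \cite{sorc1} a power $g(s'^{(r)},1_K^{(n-r)})$ (for $r\le n$) or an iterate $g_{(x)}(s'^{(r)})$ (for $r=x(n-1)+1$) lies in $P$. In the iterated regime this element is already a pure product of copies of $s'$, hence lies in $S$ since $S$ is $n$-ary multiplicative; in the regime $r\le n$ I would multiply this element of the hyperideal $P$ by further copies of $s'$ to push the exponent up to $g(s'^{(n)})\in P$, which again lies in $S$. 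Either way $P\cap S\neq\varnothing$, contradicting the hypothesis. Properness follows in the same spirit, since $1_K\in rad(P)$ would force $1_K\in P$.

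Next I would verify the defining condition for $rad(P)$ associated to the same $s$. Suppose $g(u_1^n)\in rad(P)\setminus\phi(rad(P))$ and write $w=g(u_1^n)$. By Theorem 4.23 there is an exponent witnessing $w\in rad(P)$, i.e. $g(w^{(r)},1_K^{(n-r)})\in P$ for some $r\le n$, or $g_{(x)}(w^{(r)})\in P$ for $r=x(n-1)+1$. Using commutativity (axiom 5), the $n$-ary semigroup axiom, and the distributive law (axiom 3), this power of $w=g(u_1^n)$ rearranges into $g(v_1^n)\in P$, where each $v_i$ is the corresponding power of $u_i$. This rearrangement — that a power of $g(u_1^n)$ distributes as $g$ applied to the powers of the $u_i$ — is the computational heart of the argument, and the step I expect to require the most care, since the exponents must be tracked through both power regimes of Theorem 4.23.

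I would then branch on whether $g(v_1^n)\in\phi(P)$. If it is, then a power of $g(u_1^n)$ lies in $\phi(P)$, so $g(u_1^n)\in rad(\phi(P))$, and the hypothesis $rad(\phi(P))\subseteq\phi(rad(P))$ forces $g(u_1^n)\in\phi(rad(P))$, contradicting the choice of $g(u_1^n)$; hence this case is vacuous. Otherwise $g(v_1^n)\in P\setminus\phi(P)$, and since $P$ is $n$-ary $\phi$-$\delta$-$S$-primary associated to $s$, for some $i$ either $g(v_i,s,1_K^{(n-2)})\in P$ or $g(v_1^{i-1},s,v_{i+1}^n)\in\delta(P)$.

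Finally I would convert each alternative back to the $u_i$. In the first alternative, $g(v_i,s,1_K^{(n-2)})\in P$ with $v_i$ the $k$-th power of $u_i$; multiplying inside the hyperideal $P$ by the missing copies of $s$ (i.e. by $s^{(k-1)}$) upgrades $s$ to its $k$-th power, so the resulting element is exactly the $k$-th power of $g(u_i,s,1_K^{(n-2)})$, whence $g(u_i,s,1_K^{(n-2)})\in rad(P)$. In the second alternative, the same device multiplies $g(v_1^{i-1},s,v_{i+1}^n)\in\delta(P)$ by a power of $s$ to realize the $k$-th power of $g(u_1^{i-1},s,u_{i+1}^n)$ inside the hyperideal $\delta(P)$, giving $g(u_1^{i-1},s,u_{i+1}^n)\in rad(\delta(P))$; the second hypothesis $rad(\delta(P))\subseteq\delta(rad(P))$ then yields $g(u_1^{i-1},s,u_{i+1}^n)\in\delta(rad(P))$. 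In either case the defining implication for $rad(P)$ holds, which completes the argument. The recurring obstacle throughout is the $n$-ary exponent bookkeeping flagged above: making the informal manipulations ``powers distribute over $g$'' and ``multiply by $s^{(k-1)}$'' rigorous in both regimes $r\le n$ and $r=x(n-1)+1$ of Theorem 4.23.
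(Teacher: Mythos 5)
Your proposal is correct and follows essentially the same route as the paper's proof: use Theorem 4.23 to get a power of $g(u_1^n)$ in $P$, rule out that power lying in $\phi(P)$ via $rad(\phi(P)) \subseteq \phi(rad(P))$, apply the $\phi$-$\delta$-$S$-primary property of $P$ to a rearrangement of that power, and then multiply by extra copies of $s$ to land in $rad(P)$ or in $rad(\delta(P)) \subseteq \delta(rad(P))$. The only cosmetic difference is that you factor the power as $g(v_1^n)$ with each $v_i$ a power of $u_i$, whereas the paper groups it into the two blocks $g(u_i^{(r)},1_K^{(n-r)})$ and $g(g(u_1^{i-1},1_K,u_{i+1}^n)^{(r)},1_K^{(n-r)})$; your write-up also makes explicit the disjointness $S \cap rad(P) = \varnothing$ and the exponent bookkeeping that the paper leaves implicit.
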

\begin{proof}
From $S \cap P = \varnothing$ it follows that $S \cap rad(P) =\varnothing$. Assume that $g(u_1^n) \in rad(P) \backslash
 \phi(rad(P))$ for $u_1^n \in K$. This means that there exists $r \in \mathbb {N}$ such that $g(g(u_1^n)^ {(r)} , 1_K^{(n-r)} ) \in P$ for $r \leq n$, or $g_{(x)} (g(u_1^n)^ {(r)} ) \in P$ for $r = x(n-1) + 1$. In the former case, if $g(g(u_1^n)^ {(r)} , 1_K^{(n-r)} ) \in \phi(P)$, then we get $g(u_1^n) \in rad(\phi(P)) \subseteq \phi(rad(P))$ which is impossible. In the second case, if $g_{(x)} (g(u_1^n)^ {(r)} ) \in \phi(P)$, then it leads to a contradiction by a similar argument. Then we have $g(g(u_1^n)^ {(r)} , 1_K^{(n-r)} ) \in P \backslash \phi(P)$ for $r \leq n$, or $g_{(x)} (g(u_1^n)^ {(r)} ) \in P \backslash \phi(P)$ for $r = x(n-1) + 1$. In the first possibility, since $P$ is a $\phi$-$\delta$-$S$-primary hyperideal of $K$ associated to $s \in S$ and $g(g(u_i^{(r)},1_K^{(n-r)}),g(g(u_1^{i-1},1_K,u_{i+1}^n)^{(r)},1_K^{(n-r)}),1_K^{(n-2)}) \in P \backslash \phi(P)$, we get the result that $g(s,g(u_i^{(r)},1_K^{(n-r)}),1_K^{(n-2)})=g(s,u_i^{(r)},1_K^{(n-r-1)}) \in P $ which means $g(g(s,u_i,1_K^{(n-2)})^r,1_K^{(n-r)}) \in P$ or $g(s,g(u_1^{i-1},1_K,u_{i+1}^n)^{(r)},1_K^{(n-r-1)}) \in \delta(P)$ which implies $g(g(u_1^{i-1},s,u_{i+1}^n)^r,1_K^{(n-r)}) \in \delta(P)$.  Hence we conclude that $g(s,u_i,1_K^{(n-2)}) \in rad(P)$ or $g(u_1^{i-1},s,u_{i+1}^n) \in rad(\delta(P)) \subseteq \delta(rad(P))$. Thus $rad(P)$ is a $\phi$-$\delta$-$S$-primary hyperideal of $K$ associated to $s \in S$. In the second case, one can easily complete the proof by using a similar argument.
 \end{proof}
Note that the conditions $``rad(\phi(P)) \subseteq \phi(rad(P))"$ and $``rad(\delta(P)) \subseteq \delta(rad(P))"$ in Theorem \ref{1} can not be ignored. 
 \begin{example}\label{ignor}
Assume that $A=\mathbb{Z}_3[X,Y,Z]$ and $P=\langle X^3Y^3Z^3\rangle$. Then $K=A/P$ is a Krasner $(m,n)$-hyperring with ordinary addition and ordinary multiplication. Let $S=\{1+P\}$. Then $P/P$ is a $\phi_0$-$\delta_0$-$S$-primary hyperideal of $K$. Note that $rad(\phi_0(P/P)) \nsubseteq \phi_0(rad(P/P))$ and  
$rad(P/P)$ is not a $\phi_0$-$\delta_0$-$S$-primary hyperideal of $K$ because $2XYZ+P=(2X+P)(Y+P)(Z+P) \in rad(P/P) \backslash \phi_0(rad(P/P))$ but none of the elements $(2X+P), (Y+P)$ and $(Z+P)$ are not in $\delta_0(rad(P/P))$.
\end{example}
A hyperideal $P$ of $K$ is said to be an $n$-ary  $\phi$-$S$-primary hyperideal associated to  $s \in S$, if whenever $g(u_1^n) \in P \backslash \phi(P)$, then $g(u_i,s,1_K^{(n-2)}) \in P$ or $g(u_1^{i-1},s,u_{i+1}^n) \in rad(P)$ for all $u_1^n \in K$. Now, in view of Theorem \ref{1}, we have the following result.
\begin{corollary} \label{2}
Let $P$ be a proper hyperideal of $K$. If $P$ is an $n$-ary $\phi$-$S$-primary hyperideal of $K$ associated to $s \in S$ such that $rad(\phi(P)) \subseteq \phi(rad(P))$, then $rad(P)$ is an $n$-ary $\phi$-$S$-prime hyperideal of $K$ associated to $s \in S$.
\end{corollary}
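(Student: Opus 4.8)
The plan is to obtain the corollary as the special case $\delta=\delta_1$ of Theorem \ref{1}, and then to strengthen the resulting primary conclusion to a prime one. First I would observe that for $\delta=\delta_1$ the two notions coincide: an $n$-ary $\phi$-$\delta_1$-$S$-primary hyperideal is exactly an $n$-ary $\phi$-$S$-primary hyperideal, since the second alternative $g(u_1^{i-1},s,u_{i+1}^n)\in\delta_1(P)=rad(P)$ is the one appearing in the definition of the latter. Furthermore the auxiliary hypothesis $rad(\delta_1(P))\subseteq\delta_1(rad(P))$ needed in Theorem \ref{1} collapses to $rad(rad(P))\subseteq rad(rad(P))$, which holds automatically by idempotence of the radical, so only the assumed inclusion $rad(\phi(P))\subseteq\phi(rad(P))$ is required. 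Theorem \ref{1} then gives that $Q:=rad(P)$ is $n$-ary $\phi$-$S$-primary associated to $s$, and, because $\delta_1(Q)=rad(rad(P))=Q$, its defining dichotomy reads: for every $g(u_1^n)\in Q\backslash\phi(Q)$ there is an index $i$ with $g(s,u_i,1_K^{(n-2)})\in Q$ or $g(u_1^{i-1},s,u_{i+1}^n)\in Q$, both alternatives now landing inside $Q$ itself.

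This dichotomy is one step weaker than the $\phi$-$S$-prime condition, which demands the single-factor conclusion $g(s,u_j,1_K^{(n-2)})\in Q$ for some $j$. The first alternative already has this shape, so the real task is to reduce the second alternative $g(u_1^{i-1},s,u_{i+1}^n)\in Q$, that is, $s$ multiplied by the product of the remaining $n-1$ variables, down to a single variable. I would do this by induction on the number of variable factors. Regrouping $g(u_1^{i-1},s,u_{i+1}^n)$ by the generalized associativity and commutativity of $g$ so as to isolate one variable $u_l$ as a binary factor, I pass to a suitable power lying in $P$ via the radical characterization (Theorem 4.23 in \cite{sorc1}), apply the $\phi$-$S$-primary property of $P$ along this split, and then peel the power back using $rad(Q)=Q$ together with the absorption property of $Q$ (so that a power of $g(s,w,1_K^{(n-2)})$ lying in $P$ returns $g(s,w,1_K^{(n-2)})\in Q$). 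Each such application either outputs $g(s,u_l,1_K^{(n-2)})\in Q$, finishing the proof, or returns $s$ times a product carrying one fewer variable factor.

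Since the number of variable factors strictly decreases, the induction terminates, and its base case cannot stall: when a single variable $u$ remains we are left with a relation asserting that a power of $s$ multiplied by $u$ lies in $Q$, and the branch that would otherwise loop forces a pure power of $s$ into $Q$. This is impossible because $S$ is $n$-ary multiplicatively closed while $Q\cap S=\varnothing$, the latter being the consequence of $P\cap S=\varnothing$ recorded at the start of the proof of Theorem \ref{1}. Hence $g(s,u_j,1_K^{(n-2)})\in rad(P)$ for some $j$, which is exactly the assertion that $rad(P)$ is an $n$-ary $\phi$-$S$-prime hyperideal associated to $s$.

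I expect the main obstacle to be the $\phi$-bookkeeping inside the induction: to invoke the $\phi$-$S$-primary property of $P$ at each step one must know that the power fed into it avoids $\phi(P)$. For the first step this is guaranteed by combining $rad(\phi(P))\subseteq\phi(rad(P))$ with $g(u_1^n)\notin\phi(Q)$, since a power falling into $\phi(P)$ would place $g(u_1^n)$ in $rad(\phi(P))\subseteq\phi(rad(P))=\phi(Q)$; at the later steps, however, one must track how this $\phi$-exception propagates through the successive regroupings, and controlling that propagation is the delicate heart of the argument. The remaining work—the explicit $n$-ary arity bookkeeping in the regroupings and the balancing of exponents when converting a relation $g(s,w^{(r)},1_K^{(n-r-1)})\in P$ into $g(s,w,1_K^{(n-2)})\in Q$—is routine but must be carried out with care.
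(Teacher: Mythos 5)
Your opening paragraph is, by itself, exactly the paper's proof: the paper disposes of Corollary \ref{2} in one line by taking $\delta(I)=rad(I)$ in Theorem \ref{1}, and your two observations --- that an $n$-ary $\phi$-$\delta_1$-$S$-primary hyperideal is precisely an $n$-ary $\phi$-$S$-primary hyperideal, and that the hypothesis $rad(\delta_1(P))\subseteq \delta_1(rad(P))$ collapses to the trivial $rad(rad(P))\subseteq rad(rad(P))$ --- are exactly what makes that specialization legitimate. Everything after that is your own addition: you read ``$\phi$-$S$-prime'' as the genuinely stronger single-factor condition ($g(s,u_j,1_K^{(n-2)})\in rad(P)$ for some $j$) and attempt an upgrade from the primary dichotomy. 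The paper never defines ``$n$-ary $\phi$-$S$-prime''; given that its proof is the one-line specialization, the word ``prime'' in the statement can only be read as the $\phi$-$\delta_1$-$S$-primary conclusion that Theorem \ref{1} actually delivers (a terminological slip of the paper, not a missing step you were expected to supply).

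The trouble is that your upgrade is not merely incomplete at the point you flag --- it is impossible, because the single-factor statement is false in this generality. Take $K=F[x,y]$ over a field $F$, viewed as a Krasner $(2,3)$-hyperring with ordinary addition and $g(a,b,c)=abc$ (the paper itself uses such examples), let $S=\{1\}$, $s=1$, $\phi=\phi_0$, and $P=\langle xy\rangle$. If $0\neq u_1u_2u_3\in P$, then $x\mid u_a$ and $y\mid u_b$ for some indices $a,b$; if $a=b$ then $u_a\in P$, while if $a\neq b$ then, with $i$ the remaining index, $g(u_1^{i-1},s,u_{i+1}^3)=u_au_b\in P=rad(P)$. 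Hence $P$ is a $3$-ary $\phi_0$-$S$-primary hyperideal associated to $s$, $rad(\phi_0(P))=\{0\}=\phi_0(rad(P))$, and $rad(P)=P$; yet for $(u_1,u_2,u_3)=(x,y,1)$ we have $g(u_1^3)=xy\in rad(P)\setminus\phi_0(rad(P))$ while no $g(s,u_j,1_K)=u_j$ lies in $rad(P)$. The counterexample also pinpoints where your induction breaks, beyond the $\phi$-propagation problem you candidly flag: your base case does stall. When the tuple is $(s^{(n-1)},u)$, the second alternative taken at an $s$-slot returns $g(s^{(n-1)},u)\in rad(P)$ again --- the same element, not a pure power of $s$ --- and since the primary condition only guarantees that \emph{some} index and alternative hold, nothing prevents this no-progress branch from being the only one available. (Your contradiction for a genuine pure power would be fine, since $g(s^{(r)},1_K^{(n-r)})\in rad(P)$ forces $g(s^{(n)})\in rad(P)\cap S$ by absorption, but the loop never produces such an element.) So keep your first paragraph, which is the paper's entire argument, read the conclusion as $\phi$-$\delta_1$-$S$-primary, and discard the attempted strengthening.
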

\begin{proof}
It suffices to take $\delta(I)=rad(I)$ for every hyperideal $I$ of $K$ in Theorem \ref{1}.
\end{proof}
\begin{theorem}\label{3}
Let $P$ be an $n$-ary $\phi$-$S$-primary hyperideal of $K$ associated to $s \in S$ such that $rad(\phi(P)) \subseteq \phi(rad(P))$ and $(\phi(rad(P)) : t) \subseteq (\phi(rad(P)) : s)$ for all $t \in S$. If $u \in K \backslash (rad(P) : s)$, then $(rad(P) : u) \cap S=\varnothing$.
\end{theorem}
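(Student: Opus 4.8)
The plan is to argue by contradiction. First I would record the standing fact, already used in the proof of Theorem \ref{1}, that $S \cap P = \varnothing$ forces $S \cap rad(P) = \varnothing$; this is the only place disjointness is needed. Suppose, contrary to the claim, that there is some $t \in (rad(P) : u) \cap S$, so that $g(t, u, 1_K^{(n-2)}) \in rad(P)$, while by hypothesis $g(s, u, 1_K^{(n-2)}) \notin rad(P)$ (which is exactly what $u \notin (rad(P):s)$ says, using commutativity of $g$). The whole proof then reduces to showing these two memberships are incompatible.

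I would split according to whether $g(t, u, 1_K^{(n-2)})$ lies in $\phi(rad(P))$. In the first case, commutativity rewrites $g(u, t, 1_K^{(n-2)}) \in \phi(rad(P))$ as $u \in (\phi(rad(P)) : t)$, and the hypothesis $(\phi(rad(P)):t) \subseteq (\phi(rad(P)):s)$ immediately gives $g(s, u, 1_K^{(n-2)}) \in \phi(rad(P)) \subseteq rad(P)$ (using that $\phi$ is a reduction function, so $\phi(rad(P)) \subseteq rad(P)$), contradicting $u \notin (rad(P):s)$. This case is routine and is precisely what the extra colon hypothesis is designed for.

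The substantive case is $g(t, u, 1_K^{(n-2)}) \in rad(P) \setminus \phi(rad(P))$. Here I would invoke Corollary \ref{2}: since $P$ is $\phi$-$S$-primary associated to $s$ and $rad(\phi(P)) \subseteq \phi(rad(P))$, the hyperideal $rad(P)$ is $\phi$-$S$-prime associated to $s$, i.e. membership of a product $g(w_1^n)$ in $rad(P) \setminus \phi(rad(P))$ yields an index $i$ with $g(w_i, s, 1_K^{(n-2)}) \in rad(P)$. Applying this to the factorization $w_1 = t$, $w_2 = u$, $w_3 = \cdots = w_n = 1_K$ should leave only the meaningful options: for every $i \geq 3$ one obtains $g(1_K, s, 1_K^{(n-2)}) = s \in rad(P)$, which is impossible because $s \in S$ and $S \cap rad(P) = \varnothing$, so the surviving indices are $i = 1$, giving $g(t, s, 1_K^{(n-2)}) \in rad(P)$, and $i = 2$, giving $g(u, s, 1_K^{(n-2)}) \in rad(P)$. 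The second contradicts $u \notin (rad(P):s)$ outright; for the first, $g(s, t, 1_K^{(n-2)})$ is a product of elements of $S$ and hence lies in $S \cap rad(P) = \varnothing$, again a contradiction.

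I expect the main obstacle to be the $n$-ary bookkeeping in this last case rather than any deep idea: one must be careful that feeding the padded product $g(t, u, 1_K^{(n-2)})$ into the prime condition really does collapse to the three alternatives above — in particular that choosing an index sitting at a $1_K$-slot forces $s \in rad(P)$ and is thereby excluded, which is what closes off the otherwise vacuous ``absorb $s$ into the remaining factors'' escape that the padding $1_K$'s could provide. I would also make explicit the step $g(s, t, 1_K^{(n-2)}) \in S$, which is where $n$-ary multiplicativity of $S$ is used. Once these two points are pinned down, the two cases combine to give the claim.
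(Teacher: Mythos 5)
Your argument is essentially the paper's own proof: the same reduction to $S \cap rad(P) = \varnothing$, the same appeal to Corollary \ref{2}, the same case split on whether $g(t,u,1_K^{(n-2)})$ lies in $\phi(rad(P))$ (with the colon hypothesis $(\phi(rad(P)):t) \subseteq (\phi(rad(P)):s)$ disposing of the first case), and the same two terminal contradictions in the second case. The only difference is cosmetic --- you spell out the padded indices $i \geq 3$, which the paper passes over silently --- and, like the paper, you read the conclusion of Corollary \ref{2} as producing only disjuncts of the form $g(w_i,s,1_K^{(n-2)}) \in rad(P)$, leaving the remaining formal alternative $g(t,u,s,1_K^{(n-3)}) \in rad(P)$ untreated, so your write-up sits at exactly the paper's level of rigor.
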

\begin{proof}
Since $P \cap S=\varnothing$, we conclude that $rad(P) \cap S=\varnothing$. By corollary \ref{2}, $rad(P)$ is a $\phi$-$S$-prime hyperideal of $K$ associated to $s \in S$. Assume that $a \in (rad(P) : u) \cap S$. This implies that $g(a,u,1_K^{(n-2)}) \in rad(P)$. Let $g(a,u,1_K^{(n-2)}) \in \phi(rad(P))$. It follows that $u \in (\phi(rad(P)) : a) \subseteq (\phi(rad(P)) : s)$. Then we get $g(s,u,1_K^{(n-2)}) \in \phi(rad(P)) \subseteq rad(P)$ which is impossible. Therefore, we have $g(a,u,1_K^{(n-2)}) \in rad(P) \backslash \phi(rad(P))$. Since $rad(P)$ is an $n$-ary $\phi$-$S$-prime hyperideal of $K$, we obtain $g(s,a,1_K^{(n-2)}) \in rad(P)$ which contradicts the fact that $rad(P) \cap S=\varnothing$ or $g(s,u,1_K^{(n-2)}) \in rad(P)$ which contradicts the assumption that $u \notin  (rad(P) : s)$. Then we conclude that $(rad(P) : u) \cap S=\varnothing$.
\end{proof}
We give the following results obtained by the previous theorem.
\begin{corollary}\label{4}
Let $P$ be an $n$-ary $\phi$-$\delta$-$S$-primary hyperideal of $K$ associated to $s \in S$ such that $(\delta(P) : s) =(rad(P) : s)$,  $(\phi(rad(P)) : t) \subseteq (\phi(rad(P)) : s)$ for all $t \in S$ and $\delta(P) \subseteq rad(P)$. Then $(\delta(P) : s)=(\delta(P) : g(s^n))$. In particular, if $u \in K \backslash (\delta(P) : s)$, then $(\delta(P) : u) \cap S=\varnothing.$ 
\end{corollary}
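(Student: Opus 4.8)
The plan is to first prove the equality $(\delta(P):s) = (\delta(P):g(s^n))$ by a short chain of inclusions, and then read off the ``in particular'' clause from Theorem \ref{3}. The inclusion $(\delta(P):s) \subseteq (\delta(P):g(s^n))$ needs no hypotheses: if $g(a,s,1_K^{(n-2)}) \in \delta(P)$, then since $g(s^n)$ is a $g$-multiple of $s$ and $\delta(P)$ is a hyperideal, the element $g(a,g(s^n),1_K^{(n-2)})$ is itself a $g$-multiple of $g(a,s,1_K^{(n-2)})$ (by associativity and commutativity of $g$ together with the identity axiom), hence lies in $\delta(P)$, giving $a \in (\delta(P):g(s^n))$. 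The content is in the reverse inclusion, which I would route through the radical: using $\delta(P) \subseteq rad(P)$ and $(\delta(P):s)=(rad(P):s)$, it suffices to establish the general identity $(rad(P):g(s^n)) = (rad(P):s)$, after which the chain
\[
(\delta(P):g(s^n)) \subseteq (rad(P):g(s^n)) = (rad(P):s) = (\delta(P):s) \subseteq (\delta(P):g(s^n))
\]
forces all four sets to coincide.

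The identity $(rad(P):g(s^n)) = (rad(P):s)$ is where I expect the real work, and it is a purely radical-theoretic statement. The inclusion $(rad(P):s) \subseteq (rad(P):g(s^n))$ is again the hyperideal argument above. For the converse, take $a$ with $g(a,g(s^n),1_K^{(n-2)}) \in rad(P)$; by the power characterization of the radical (Theorem 4.23 in \cite{sorc1}) some $g$-power of this element lies in $P$, which in multiplicative shorthand behaves like $(a\,s^n)^k = a^k s^{nk} \in P$. Multiplying by a suitable power of $a$ and reorganizing the factors by commutativity and associativity of $g$, I can then exhibit a $g$-power of $g(a,s,1_K^{(n-2)})$ — behaving like $(as)^{nk} = a^{(n-1)k}\,(a^k s^{nk})$ — inside $P$, whence $g(a,s,1_K^{(n-2)}) \in rad(P)$, i.e. $a \in (rad(P):s)$. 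The main obstacle is carrying out this exponent bookkeeping correctly in the $n$-ary setting: keeping track of the $1_K$-padding and of the two regimes $r \le n$ versus $r = x(n-1)+1$ appearing in Theorem 4.23. The algebra is elementary but must be done with care.

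Finally, for the ``in particular'' statement I would invoke Theorem \ref{3}. First I observe that since $\delta(P) \subseteq rad(P)$, the defining implication of a $\phi$-$\delta$-$S$-primary hyperideal (where one alternative lands in $\delta(P)$) upgrades to that of a $\phi$-$S$-primary hyperideal (where it lands in $rad(P)$), so $P$ is $n$-ary $\phi$-$S$-primary associated to $s$; Theorem \ref{3} then applies, its hypotheses $rad(\phi(P)) \subseteq \phi(rad(P))$ and $(\phi(rad(P)):t) \subseteq (\phi(rad(P)):s)$ being the relevant ones. Given $u \in K \setminus (\delta(P):s) = K \setminus (rad(P):s)$, Theorem \ref{3} yields $(rad(P):u) \cap S = \varnothing$; since $\delta(P) \subseteq rad(P)$ gives $(\delta(P):u) \subseteq (rad(P):u)$, we conclude $(\delta(P):u) \cap S = \varnothing$, as required.
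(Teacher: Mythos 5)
Your proposal is correct and follows essentially the same route as the paper: both reduce the equality to the chain $(\delta(P):g(s^n)) \subseteq (rad(P):g(s^n)) = (rad(P):s) = (\delta(P):s) \subseteq (\delta(P):g(s^n))$, and both obtain the ``in particular'' clause by upgrading $P$ to an $n$-ary $\phi$-$S$-primary hyperideal via $\delta(P)\subseteq rad(P)$ and then invoking Theorem \ref{3} together with $(\delta(P):u)\subseteq (rad(P):u)$. The only divergence is minor: you prove $(rad(P):s)=(rad(P):g(s^n))$ directly from the power characterization of the radical, whereas the paper simply asserts it; note also that both arguments share the same slight looseness, since Theorem \ref{3} formally requires $rad(\phi(P))\subseteq\phi(rad(P))$, which is not among the corollary's stated hypotheses.
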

\begin{proof}
Since $\delta(P) \subseteq rad(P)$ and $P$ is an $n$-ary $\phi$-$\delta$-$S$-primary hyperideal of $K$ associated to $s \in S$, we conclude that $(rad(P) : s)=(rad(P) : g(s^n))$ and $P$ is a $\phi$-$S$-primary hyperideal of $K$ associated to $s \in S$. Since $(\delta(P) : s) =(rad(P) : s)$ and $\delta(P) \subseteq rad(P)$, we get the result that $(\delta(P) : s)=(\delta(P) : g(s^n))$. Furthermore, let $u \notin (\delta(P) : s)$. Then we obtain $g(s,u,1_K^{(n-2)}) \notin rad(P)$. Therefore we conclude that $(rad(P) : u) \cap S=\varnothing$ by Theorem \ref{3}. Since $\delta(P) \subseteq rad(P)$, we get the result that $(\delta(P) : u) \cap S \subseteq (rad(P) : u) \cap S=\varnothing$. 
\end{proof}
\begin{theorem} \label{5}
Let $P$ be an $n$-ary $\phi$-$\delta$-$S$-primary hyperideal of $K$ associated to $s \in S$. If $u \in K \backslash (\delta(P) : g(s^n))$, then $(P : g(s,u,1_K^{(n-2)}))=(\phi(P) : g(s,u,1_K^{(n-2)}))$ or $(P : g(s,u,1_K^{(n-2)}))=(P : s)$.
\end{theorem}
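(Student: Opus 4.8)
The plan is to prove the statement as a dichotomy, after recording two routine inclusions. Write $w=g(s,u,1_K^{(n-2)})$. First I would check that $(\phi(P):w)\subseteq(P:w)$ and $(P:s)\subseteq(P:w)$. The first is immediate from $\phi(P)\subseteq P$; for the second, if $g(a,s,1_K^{(n-2)})\in P$ then regrouping gives $g(a,w,1_K^{(n-2)})=g(g(a,s,1_K^{(n-2)}),u,1_K^{(n-2)})\in P$, since $P$ absorbs the factor $u$. Thus $(\phi(P):w)$ and $(P:s)$ are sub-hyperideals of $(P:w)$, and the whole content is to show that $(P:w)$ actually coincides with one of them.

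Second, I would extract the one consequence of the hypothesis that drives everything. Since $u\notin(\delta(P):g(s^n))$ means $g(g(s^n),u,1_K^{(n-2)})\notin\delta(P)$ and $\delta(P)$ is a hyperideal, no element $g(g(s^{(j)},1_K^{(n-j)}),u,1_K^{(n-2)})$ with $1\le j\le n$ can lie in $\delta(P)$: were it there, multiplying by the missing $n-j$ copies of $s$ would force $g(g(s^n),u,1_K^{(n-2)})\in\delta(P)$, a contradiction. In particular $g(g(s^{(2)},1_K^{(n-2)}),u,1_K^{(n-2)})\notin\delta(P)$, and this is exactly what annihilates the expansion branch of the primary condition.

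The core of the argument is the covering statement that every $x\in(P:w)$ lies in $(\phi(P):w)\cup(P:s)$. For such an $x$ we have $g(x,w,1_K^{(n-2)})\in P$. If this element lies in $\phi(P)$ then $x\in(\phi(P):w)$; otherwise $g(x,w,1_K^{(n-2)})\in P\backslash\phi(P)$, and I would apply the $\phi$-$\delta$-$S$-primary property to the two-factor grouping in which $x$ is the distinguished factor and $w$ is kept as a single factor. The two branches are $g(x,s,1_K^{(n-2)})\in P$, which says $x\in(P:s)$, and the expansion branch $g(s,w,1_K^{(n-2)})=g(g(s^{(2)},1_K^{(n-2)}),u,1_K^{(n-2)})\in\delta(P)$, which the previous paragraph forbids. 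Hence $x\in(P:s)$, giving $(P:w)\subseteq(\phi(P):w)\cup(P:s)$ and, with the first step, $(P:w)=(\phi(P):w)\cup(P:s)$. Invoking the standard fact that a canonical hypergroup is not the union of two proper subhypergroups, one of the two sub-hyperideals must equal $(P:w)$, which is the asserted dichotomy.

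The step I expect to be the real obstacle is the application of the primary condition in the covering claim. Because $w$ already contains the associate $s$, the product $g(x,w,1_K^{(n-2)})$ carries an $s$ inside it, and a careless factorisation makes the condition vacuous, since the branch that merely asserts $g(x,w,1_K^{(n-2)})\in\delta(P)$ holds automatically from $P\subseteq\delta(P)$. Grouping $s$ with $u$ into the single factor $w$ is what forces the surviving expansion branch to be the pure-power statement $g(g(s^{(2)},1_K^{(n-2)}),u,1_K^{(n-2)})\in\delta(P)$ that the hypothesis can exclude. One must still confront the symmetry of the condition: the alternatives that instead distinguish $w$, or one of the padding factors $1_K$, either assert $s\in P$ (impossible since $P\cap S=\varnothing$) or place a power of $s$ times $u$ into $\delta(P)$, but they may also only place $g(x,s,1_K^{(n-2)})$ into $\delta(P)$ rather than into $P$. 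Ruling out this last escape — equivalently, justifying that $x$ may be taken as the distinguished factor — is the crux; it is clean when $\delta(P)=P$, and in general appears to require either the intended orientation of the definition or an auxiliary argument using a witness $b$ with $g(b,w,1_K^{(n-2)})\in P\backslash\phi(P)$ together with reversibility in the additive hypergroup.
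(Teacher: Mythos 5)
Your proof is correct and, at its core, takes the same route as the paper's: take $a\in(P:w)$ for $w=g(s,u,1_K^{(n-2)})$, split on whether $g(a,w,1_K^{(n-2)})\in\phi(P)$, and otherwise apply the primary condition to the factorization $(a,w,1_K^{(n-2)})$ with $s$ and $u$ packed into the single factor $w$, so that the surviving expansion branch is $g(s,w,1_K^{(n-2)})=g(s^{(2)},u,1_K^{(n-3)})\in\delta(P)$, which is exactly what $u\notin(\delta(P):g(s^n))$ forbids via your multiplying-up observation (using that $\delta(P)$ is a hyperideal). The differences are two places where you are more careful than the paper. First, the paper's write-up leaps from the per-element alternative ``$a\in(\phi(P):w)$ or $a\in(P:s)$'' directly to the global equalities; as you note, what the case analysis actually yields is the covering $(P:w)=(\phi(P):w)\cup(P:s)$, and passing to the stated dichotomy needs your union lemma (a canonical hypergroup is not the union of two proper subhypergroups, valid here by reversibility), or alternatively the witness-plus-reversibility argument with $f(a,b,0^{(m-2)})$ that the paper itself uses in the proof of Theorem \ref{10}; either repair closes a step the paper silently skips. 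Second, the ``crux'' you flag at the end is settled in the paper by fiat: throughout (see the proofs of Theorems \ref{1}, \ref{hadi} and \ref{15}) the defining condition is applied in the oriented form ``for the factor you single out, either $g(s,u_i,1_K^{(n-2)})\in P$, or $s$ times the product of the remaining factors lies in $\delta(P)$,'' and the paper's proof of this theorem invokes it with $a$ as the singled-out factor. Your instinct that the fully symmetric ``for some $i$'' reading cannot be intended is right, for an even stronger reason than the escape $g(a,s,1_K^{(n-2)})\in\delta(P)$ you mention: under that reading the branch attached to any padding factor $1_K$, namely $g(a,w,s,1_K^{(n-3)})\in\delta(P)$, holds automatically (the full product already lies in $P\subseteq\delta(P)$ and $\delta(P)$ absorbs products), so the condition would be vacuous for every factorization containing a unit and essentially all of the paper's theorems would collapse. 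Thus your argument is complete under the only reading of the definition that makes the paper meaningful, and no auxiliary argument is needed for that last point.
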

\begin{proof}
Let $a \in (P : g(s,u,1_K^{(n-2)}))$. Therefore we have $g(a,g(s,u,1_K^{(n-2)}),1_K^{(n-2)}) \in P$. If $g(a,g(s,u,1_K^{(n-2)}),1_K^{(n-2)}) \in \phi(P)$, then $a \in (\phi(P) : g(s,u,1_K^{(n-2)}))$ which means $(P : g(s,u,1_K^{(n-2)})) \subseteq (\phi(P) : g(s,u,1_K^{(n-2)}))$. On the other hand, we have $(\phi(P) : g(s,u,1_K^{(n-2)})) \subseteq (P : g(s,u,1_K^{(n-2)})) $ as $\phi(P) \subseteq P$. Hence we conclude that $(P : g(s,u,1_K^{(n-2)}))=(\phi(P) : g(s,u,1_K^{(n-2)}))$. Now, let $g(a,g(s,u,1_K^{(n-2)}),1_K^{(n-2)}) \notin \phi(P)$, so $g(a,g(s,u,1_K^{(n-2)}),1_K^{(n-2)}) \in P \backslash \phi(P)$. Since $P$ is  a $\phi$-$\delta$-$S$-primary hyperideal of $K$ associated to $s \in S$ and $u \notin (\delta(P) : g(s^n))$, we get the result that $g(s,a,1_K^{(n-2)}) \in P$ which implies $a \in (P : s)$. Thus $(P : g(s,u,1_K^{(n-2)})) \subseteq (P : s)$. Also, we know that $  (P : s) \subseteq (P : g(s,u,1_K^{(n-2)}))$. Consequently, $(P : g(s,u,1_K^{(n-2)}))=(P : s)$.
\end{proof}
Now, we have the following result.
\begin{corollary}\label{6}
Let $P$ be an $n$-ary $\phi$-$S$-primary hyperideal of $K$ associated to $s \in S$. If $u \in K \backslash (rad(P) : s)$, then $(P : g(s,u,1_K^{(n-2)}))=(\phi(P) : g(s,u,1_K^{(n-2)}))$ or $(P : g(s,u,1_K^{(n-2)}))=(P : s)$.
\end{corollary}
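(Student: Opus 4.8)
The plan is to realize Corollary \ref{6} as the special case of Theorem \ref{5} in which the expansion function is the radical. Concretely, I would set $\delta(I)=rad(I)$ for every hyperideal $I$ of $K$. With this choice the defining condition of an $n$-ary $\phi$-$\delta$-$S$-primary hyperideal collapses verbatim to the condition defining an $n$-ary $\phi$-$S$-primary hyperideal, so the hypothesis that $P$ is $\phi$-$S$-primary associated to $s$ says exactly that $P$ is $\phi$-$\delta$-$S$-primary associated to $s$ for this $\delta$. Moreover $\delta(P)=rad(P)$, so the conclusion of Theorem \ref{5}, namely $(P : g(s,u,1_K^{(n-2)}))=(\phi(P) : g(s,u,1_K^{(n-2)}))$ or $(P : g(s,u,1_K^{(n-2)}))=(P : s)$, is word for word the conclusion we want.

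The only point that needs checking is that the hypothesis of Corollary \ref{6}, namely $u \in K \backslash (rad(P):s)$, matches the hypothesis of Theorem \ref{5}, namely $u \in K \backslash (\delta(P):g(s^n))=K \backslash (rad(P):g(s^n))$. Thus it suffices to establish the identity $(rad(P):s)=(rad(P):g(s^n))$. The inclusion $(rad(P):s) \subseteq (rad(P):g(s^n))$ is immediate from the absorbing property of the hyperideal $rad(P)$: if $g(u,s,1_K^{(n-2)}) \in rad(P)$, then $g(u,g(s^n),1_K^{(n-2)})$, being obtained from it by further $g$-multiplication, again lies in $rad(P)$. For the reverse inclusion I would use that $rad(P)$ is the intersection of the $n$-ary prime hyperideals $Q$ containing $P$: if $g(u,g(s^n),1_K^{(n-2)}) \in rad(P)$, then for each such $Q$ the membership $g(u,g(s^n),1_K^{(n-2)}) \in Q$ forces, by the $n$-ary prime characterization (Lemma 4.5 in \cite{sorc1}) applied first to this product and then to $g(s^n)=g(s^{(n)})$, either $u \in Q$ or $s \in Q$; in both cases $g(u,s,1_K^{(n-2)}) \in Q$ since $Q$ is a hyperideal. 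Intersecting over all such $Q$ yields $g(u,s,1_K^{(n-2)}) \in rad(P)$, i.e. $u \in (rad(P):s)$.

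With the identity $(rad(P):s)=(rad(P):g(s^n))$ in hand, the hypotheses of Theorem \ref{5} are met and the corollary follows at once by specialization. I expect the only genuine subtlety to be this identity $(rad(P):s)=(rad(P):g(s^n))$, in particular keeping the $n$-ary bookkeeping straight when passing between $g(u,s,1_K^{(n-2)})$ and $g(u,g(s^n),1_K^{(n-2)})$ and invoking the prime property twice; everything else reduces to the observation that $\phi$-$S$-primary is precisely $\phi$-$\delta$-$S$-primary for $\delta=rad$.
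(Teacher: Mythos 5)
Your proposal is correct and takes essentially the same route as the paper: the paper's proof likewise reads Corollary \ref{6} as the specialization of Theorem \ref{5} to $\delta(I)=rad(I)$ and bridges the hypotheses via the identity $(rad(P):s)=(rad(P):g(s^n))$. The only difference is that the paper asserts this identity without justification, whereas you supply a correct proof of it (absorption for one inclusion, the prime-intersection description of $rad(P)$ together with Lemma 4.5 of \cite{sorc1} for the other), which if anything makes your argument more complete.
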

\begin{proof}
Assume that  $P$ is an $n$-ary $\phi$-$S$-primary hyperideal of $K$ associated to $s \in S$. Since $(rad(P) : s)=(rad(P) : g(s^n))$, we conclude that  $u \notin (rad(P) : g(s^n))$ by the hypothesis. Thus we get the result that $(P : g(s,u,1_K^{(n-2)}))=(\phi(P) : g(s,u,1_K^{(n-2)}))$ or $(P : g(s,u,1_K^{(n-2)}))=(P : s)$ by Theorem  \ref{5}.
\end{proof}
\begin{theorem}
Let $P$ be an  $n$-ary $\phi$-$\delta$-$S$-primary hyperideal of $K$ associated to $s \in S$ such that $(\phi(P) : u) \subseteq \phi(P : u)$ for every $u \notin P$. Then so is $(P : u)$.
\end{theorem}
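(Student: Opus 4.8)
The plan is to verify the defining implication of an $n$-ary $\phi$-$\delta$-$S$-primary hyperideal directly for $(P:u)$, feeding every case back to the assumed behaviour of $P$. First I would record the routine preliminaries. Since $P$ absorbs products we have $P \subseteq (P:u)$, so by monotonicity of the expansion function $\delta(P) \subseteq \delta(P:u)$; this inclusion will supply the $\delta$-conclusions. Also $(P:u)$ is a hyperideal, it is proper because $u \notin P$ forces $g(1_K,u,1_K^{(n-2)})=u \notin P$, hence $1_K \notin (P:u)$, and (as required by the definition of the notion) one checks that it meets $S$ trivially, so that the statement concerns a genuine candidate.

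Next, suppose $g(v_1^n) \in (P:u) \setminus \phi(P:u)$. By definition $g(g(v_1^n),u,1_K^{(n-2)}) \in P$, and by commutativity and associativity of $g$ this element equals $g(v_1^{n-1},v_n')$, where $v_n' := g(v_n,u,1_K^{(n-2)})$. The first genuine step is to promote this to a membership in $P \setminus \phi(P)$: if it lay in $\phi(P)$, then $g(v_1^n) \in (\phi(P):u) \subseteq \phi(P:u)$ by the standing hypothesis (legitimate since $u \notin P$), contradicting the choice of $v_1^n$. Thus $g(v_1^{n-1},v_n') \in P \setminus \phi(P)$, and I apply the $\phi$-$\delta$-$S$-primary property of $P$ to the $n$-tuple $(v_1,\dots,v_{n-1},v_n')$: there is an index $i$ with $g(w_i,s,1_K^{(n-2)}) \in P$ or $g(w_1^{i-1},s,w_{i+1}^n) \in \delta(P)$, where $w_j=v_j$ for $j<n$ and $w_n=v_n'$.

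Now I run the case analysis, translating each outcome into a statement about $(P:u)$. If $i=n$ and the first alternative holds, then $g(v_n',s,1_K^{(n-2)}) \in P$; commuting $u$ and $s$ rewrites this as $g(g(v_n,s,1_K^{(n-2)}),u,1_K^{(n-2)}) \in P$, i.e. $g(v_n,s,1_K^{(n-2)}) \in (P:u)$, the desired conclusion at index $n$. If $i=n$ and the second alternative holds, then $g(v_1^{n-1},s) \in \delta(P) \subseteq \delta(P:u)$, which is exactly the $\delta$-conclusion at index $n$. If $i \neq n$ and the first alternative holds, then $g(v_i,s,1_K^{(n-2)}) \in P \subseteq (P:u)$, again finishing that case.

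The main obstacle is the last configuration: $i \neq n$ together with $g(w_1^{i-1},s,w_{i+1}^n) \in \delta(P)$, since here the factor $w_n=v_n'$ still carries $u$ inside it. Pulling $u$ out by commutativity rewrites the membership as $g(g(v_1^{i-1},s,v_{i+1}^n),u,1_K^{(n-2)}) \in \delta(P)$, that is $g(v_1^{i-1},s,v_{i+1}^n) \in (\delta(P):u)$, whereas the conclusion I need for $(P:u)$ is $g(v_1^{i-1},s,v_{i+1}^n) \in \delta(P:u)$. The inclusion $\delta(P) \subseteq \delta(P:u)$ only places the product-\emph{with}-$u$ inside $\delta(P:u)$, not the bare product, so this is precisely the delicate point. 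I would close it by establishing, or invoking as the expansion-function analogue of the standing $\phi$-hypothesis, that $(\delta(P):u) \subseteq \delta(P:u)$ for $u \notin P$; this plays for $\delta$ exactly the role that $(\phi(P):u) \subseteq \phi(P:u)$ played in the reduction step above, and it is the single place where I expect to need care rather than routine manipulation, everything else being symmetric bookkeeping with $g$, commutativity, and the absorption property of the hyperideal $P$.
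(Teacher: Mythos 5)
Your opening reduction coincides with the paper's: from $g(v_1^n) \in (P:u)\setminus\phi((P:u))$ you pass, via the hypothesis $(\phi(P):u) \subseteq \phi((P:u))$, to $g(g(v_1^n),u,1_K^{(n-2)}) \in P \setminus \phi(P)$. The divergence --- and the genuine gap --- is in how this element is presented as an $n$-ary product before the defining property of $P$ is applied. You feed $P$ the tuple $(v_1,\dots,v_{n-1},g(v_n,u,1_K^{(n-2)}))$, keeping the first $n-1$ entries separate; as you yourself note, when the index lands on some $i \neq n$ and the $\delta$-alternative fires, the residual product still carries $u$, and you only obtain $g(v_1^{i-1},s,v_{i+1}^n) \in (\delta(P):u)$ rather than $\in \delta((P:u))$. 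The inclusion $(\delta(P):u) \subseteq \delta((P:u))$ that you propose to ``establish or invoke'' is not among the hypotheses and is in fact false even in situations where every stated hypothesis holds: take the ordinary polynomial ring $k[x]$ viewed as a Krasner $(2,2)$-hyperring, $S=\{1\}$, $\phi=\phi_0$, $\delta=\delta_1=rad$, $P=\langle x^2\rangle$, $u=x \notin P$. Then $P$ is primary (hence $\phi_0$-$\delta_1$-$S$-primary), and $(\phi_0(P):x)=\{0\}=\phi_0((P:x))$, yet $(rad(P):x)=k[x]$ while $rad((P:x))=\langle x\rangle$; note that the conclusion of the theorem still holds here, since $(P:x)=\langle x\rangle$ is prime. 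So your route cannot be completed; it proves a different statement carrying an extra hypothesis on $\delta$. (Secondarily, your parenthetical claim that $(P:u)\cap S=\varnothing$ ``trivially'' also needs an argument --- it is not automatic --- though the paper glosses over this as well.)

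The idea you are missing is the paper's regrouping. Instead of keeping all entries separate, present the element as a product of \emph{two} factors padded by identities, namely $g\bigl(g(u,v_i,1_K^{(n-2)}),\, g(v_1^{i-1},1_K,v_{i+1}^n),\, 1_K^{(n-2)}\bigr)$, so that the factor carrying $u$ is exactly the one tested against $P$, while the entire complementary product $g(v_1^{i-1},1_K,v_{i+1}^n)$ --- which is $u$-free --- occupies a single slot. The two alternatives of the $\phi$-$\delta$-$S$-primary property then read: $g(s,g(u,v_i,1_K^{(n-2)}),1_K^{(n-2)}) \in P$, i.e.\ $g(s,v_i,1_K^{(n-2)}) \in (P:u)$; or $g(s,g(v_1^{i-1},1_K,v_{i+1}^n),1_K^{(n-2)}) = g(v_1^{i-1},s,v_{i+1}^n) \in \delta(P)$, which lies in $\delta((P:u))$ by precisely the monotonicity observation ($P \subseteq (P:u)$, hence $\delta(P)\subseteq\delta((P:u))$) that you recorded in your preliminaries. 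In this presentation $u$ never gets trapped inside a $\delta$-membership, so monotonicity of $\delta$ is all that is needed and the theorem is proved as stated, with no supplementary hypothesis.
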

\begin{proof}
Let $g(u_1^n) \in (P : u) \backslash \phi((P : u))$ for $u_1^n \in K$. Therefore we conclude that $g(g(u,u_i,1_K^{(n-2)}),g(u_1^{i-1},1_K,u_{i+1}^n),1_K^{(n-2)})=g(g(u_1^n),u,1_K^{(n-2)}) \in P \backslash \phi(P)$. Since $P$ is an  $n$-ary $\phi$-$\delta$-$S$-primary hyperideal of $K$ associated to $s \in S$, we get the result that $g(u,g(s,u_i,1_K^{(n-2)}),1_K^{(n-2)})
=g(s,g(u,u_i,1_K^{(n-2)}),1_K^{(n-2)}) \in P$ which means $g(s,u_i,1_K^{(n-2)}) \in (P : u)$ or $g(u_1^{i-1},s,u_{i+1}^n))=
g(s,g(u_1^{i-1},1_K,u_{i+1}^n),1_K^{(n-2)}) \in \delta(P) \subseteq (\delta(P) : u)$. This implies that $(P : u)$ is an  $n$-ary $\phi$-$\delta$-$S$-primary hyperideal of $K$ associated to $s \in S$
\end{proof}
\begin{proposition}\label{}
Let  $Q$ be a hyperideal of $K$, $P$  an $n$-ary $\phi$-$\delta$-$S$-primary hyperideal of $K$ associated to $s \in S$, $\phi(P) = \phi(I)$ for every hyperideal $I$ of $K$ and $Q \cap S \neq \varnothing $. If $\delta(P \cap Q)=\delta(P) \cap \delta(Q)$,  then $P \cap Q$ is an $n$-ary $\phi$-$\delta$-$S$-primary hyperideal of $K$.
\end{proposition}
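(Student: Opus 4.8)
The plan is to exhibit $P \cap Q$ as an $n$-ary $\phi$-$\delta$-$S$-primary hyperideal associated to an element $s'$ that I manufacture from $s$ and a chosen point of $Q \cap S$. First I would dispose of the structural requirements: $P \cap Q$ is a hyperideal (as an intersection of two hyperideals), it is proper since $P \cap Q \subseteq P \subsetneq K$, and it is disjoint from $S$ because $(P \cap Q) \cap S \subseteq P \cap S = \varnothing$. The decisive use of the hypothesis $\phi(P)=\phi(I)$ for every hyperideal $I$ is that it yields $\phi(P \cap Q)=\phi(P)$; hence an element of $(P \cap Q)\backslash \phi(P \cap Q)$ is in particular an element of $P \backslash \phi(P)$, which is precisely what allows the defining property of $P$ to be invoked.

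For the main argument I would pick $t \in Q \cap S$ and set $s'=g(s^{(n-1)},t)$, which lies in $S$ by $n$-ary multiplicativity. Suppose $g(u_1^n)\in (P\cap Q)\backslash \phi(P\cap Q)$. By the previous paragraph $g(u_1^n)\in P\backslash \phi(P)$, so since $P$ is $\phi$-$\delta$-$S$-primary associated to $s$ there is an index $i$ with $g(u_i,s,1_K^{(n-2)})\in P$ or $g(u_1^{i-1},s,u_{i+1}^n)\in \delta(P)$. In the first case, associativity and commutativity of $g$ give $g(u_i,s',1_K^{(n-2)})=g(g(u_i,s,1_K^{(n-2)}),s^{(n-2)},t)$; the factor $g(u_i,s,1_K^{(n-2)})\in P$ forces this into $P$, while the factor $t\in Q$ forces it into $Q$, so it lies in $P\cap Q$. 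In the second case, writing $x=g(u_1^{i-1},s,u_{i+1}^n)\in \delta(P)$, the same reassociation gives $g(u_1^{i-1},s',u_{i+1}^n)=g(x,s^{(n-2)},t)$, which lies in $\delta(P)$ since $\delta(P)$ is a hyperideal, and lies in $\delta(Q)$ since $t\in Q\subseteq \delta(Q)$; hence it lies in $\delta(P)\cap \delta(Q)=\delta(P\cap Q)$, using the standing hypothesis. Thus the defining condition holds for $P\cap Q$ with associated element $s'$.

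The step I expect to be the main obstacle is the correct design of the associated element $s'$ together with the verification of the reassociation identities. One cannot simply take $s'=g(s,t,1_K^{(n-2)})$, because $1_K$ need not belong to $S$ and so this need not be a member of $S$; the remedy is to multiply $n$ genuine elements of $S$, for instance $s'=g(s^{(n-1)},t)$, which simultaneously retains a factor $s$ (to exploit the $\phi$-$\delta$-$S$-primary property of $P$) and a factor $t\in Q$ (to land in $Q$ and in $\delta(Q)$). The reassociations $g(u_i,s',1_K^{(n-2)})=g(g(u_i,s,1_K^{(n-2)}),s^{(n-2)},t)$ and $g(u_1^{i-1},s',u_{i+1}^n)=g(x,s^{(n-2)},t)$ are then routine consequences of the generalized associativity of the $n$-ary operation $g$ together with its commutativity, since both sides flatten to the same multiset of $2n-1$ arguments. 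Everything else --- properness, disjointness from $S$, and the transfer of $\phi$ --- is immediate from the hypotheses.
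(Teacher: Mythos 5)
Your proof is correct and follows essentially the same route as the paper's: both reduce membership in $(P\cap Q)\backslash\phi(P\cap Q)$ to $P\backslash\phi(P)$ via the hypothesis on $\phi$, invoke the primary property of $P$ at $s$, and then pass to a new associated element built from $s$ and a chosen $t\in Q\cap S$, using $\delta(P\cap Q)=\delta(P)\cap\delta(Q)$ at the end. The only (inessential) difference is your choice $s'=g(s^{(n-1)},t)$ versus the paper's $g(t^{(n-1)},s)$; both lie in $S$ and the reassociation argument is identical.
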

\begin{proof}
Clearly,   $(P \cap Q) \cap S=\varnothing$. Assume that  $u_1^n \in K$ such that  $ g(u_1^n) \in (P \cap Q) \backslash \phi(P \cap Q)$. Since $P$  is an $n$-ary $\phi$-$\delta$-$S$-primary hyperideal of $K$ associated to $s \in S$ and $g(u_1^n) \in P \backslash \phi(P) $, we conclude that  $g(s,u_i,1_K^{(n-2)}) \in P$  or $g(u_1^{i-1},s,u_{i+1}^n) \in \delta(P)$ for some $i \in \{1,\cdots,n\}$. Now, take any $t \in Q \cap S$. So $g(t^{(n-1)},s) \in S$. Then we get the result that  $g(g(t^{(n-1)},s),u_i,1_K^{(n-2)})=g(t^{(n-1)},g(s,u_i,1_K^{(n-2)})) \in P \cap Q$ or $g(u_1^{i-1},g(t^{(n-1)},s),u_{i+1}^n)=g(g(t^{(n-1)}, g(u_1^{i-1},s,u_{i+1}^n)) \in \delta(P) \cap \delta(Q)=rad(P \cap Q)$. This means that  $P \cap Q$ is an $n$-ary $\phi$-$\delta$-$S$-primary hyperideal of $K$.
\end{proof}
\begin{proposition} \label{}
Let $P_1^{n-1}$   be some hyperideals of $K$, $P$  an $n$-ary $\phi$-$\delta$-$S$-primary hyperideal of $K$ associated to $s \in S$, $\phi(P) = \phi(I)$ for every hyperideal $I$ of $K$ and $\delta(g(P_1^{n-1},P))=\delta(P_1) \cap \cdots \cap \delta(P_{n-1}) \cap \delta(P) $. If $P_j \cap S \neq \varnothing $ for each $j \in \{1,\cdots,n-1\}$,  then $g(P_1^{n-1},P)$ is an $n$-ary $\phi$-$\delta$-$S$-primary hyperideal of $K$.
\end{proposition}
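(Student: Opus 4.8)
The plan is to follow the template of the preceding proposition almost verbatim, replacing the binary intersection $P \cap Q$ by the $n$-ary product $J := g(P_1^{n-1},P)$ and the single witness $t \in Q \cap S$ by a tuple of witnesses $t_j \in P_j \cap S$. First I would record two preliminary facts. Since $P$ is a hyperideal, the absorbing property of hyperideals gives $g(P_1^{n-1},P) \subseteq P$, so $J \subseteq P$ and hence $J \cap S \subseteq P \cap S = \varnothing$; thus $J$ is a proper hyperideal disjoint from $S$. Moreover, because $J$ is a hyperideal of $K$, the hypothesis $\phi(P)=\phi(I)$ for every hyperideal $I$ yields $\phi(J)=\phi(P)$.

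Next I would start from an arbitrary $u_1^n \in K$ with $g(u_1^n) \in J \setminus \phi(J)$. Using $J \subseteq P$ together with $\phi(J)=\phi(P)$, this forces $g(u_1^n) \in P \setminus \phi(P)$. Since $P$ is $n$-ary $\phi$-$\delta$-$S$-primary associated to $s$, there is an index $i$ with $g(s,u_i,1_K^{(n-2)}) \in P$ or $g(u_1^{i-1},s,u_{i+1}^n) \in \delta(P)$. I would then choose $t_j \in P_j \cap S$ for each $j \in \{1,\dots,n-1\}$, which is possible by hypothesis, and set $t := g(t_1^{n-1},s)$; this element lies in $S$ because $S$ is an $n$-ary multiplicative set.

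The verification that $t$ witnesses the $\phi$-$\delta$-$S$-primary property of $J$ splits into the two cases. If $g(s,u_i,1_K^{(n-2)}) \in P$, then by associativity of $g$ one has $g(t,u_i,1_K^{(n-2)}) = g(t_1^{n-1}, g(s,u_i,1_K^{(n-2)}))$, which is a product of the elements $t_1 \in P_1, \dots, t_{n-1}\in P_{n-1}$ with an element of $P$ and hence lies in $g(P_1^{n-1},P)=J$. If instead $g(u_1^{i-1},s,u_{i+1}^n) \in \delta(P)$, I would use associativity together with commutativity of $g$ to pull the factors $t_1,\dots,t_{n-1}$ out of the $i$-th slot and rewrite $g(u_1^{i-1},t,u_{i+1}^n) = g(t_1^{n-1}, g(u_1^{i-1},s,u_{i+1}^n))$. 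Because each $t_j \in P_j \subseteq \delta(P_j)$ and $g(u_1^{i-1},s,u_{i+1}^n) \in \delta(P)$, the absorbing property of the hyperideals $\delta(P_1),\dots,\delta(P_{n-1}),\delta(P)$ places this element in $\delta(P_1)\cap\cdots\cap\delta(P_{n-1})\cap\delta(P)$, which equals $\delta(g(P_1^{n-1},P))=\delta(J)$ by hypothesis. In either case the defining condition holds for $J$ with associated element $t \in S$, so $J$ is $n$-ary $\phi$-$\delta$-$S$-primary.

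The routine parts here are the membership claims, which reduce entirely to the absorbing property of hyperideals. I expect the only genuinely delicate step to be the rearrangement $g(u_1^{i-1},t,u_{i+1}^n) = g(t_1^{n-1}, g(u_1^{i-1},s,u_{i+1}^n))$ in the second case: since the block $t = g(t_1^{n-1},s)$ sits in an interior slot rather than at an end, one must invoke both the $n$-ary associativity of the semigroup $(K,g)$ and its commutativity to migrate the $t_j$ to the front and isolate the factor $g(u_1^{i-1},s,u_{i+1}^n)$, after which the conclusion follows exactly as in the binary case of the previous proposition.
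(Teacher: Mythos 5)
Your proof is correct and follows essentially the same route as the paper's own argument: pass from $g(u_1^n)\in g(P_1^{n-1},P)\setminus\phi(g(P_1^{n-1},P))$ to $g(u_1^n)\in P\setminus\phi(P)$, apply the $\phi$-$\delta$-$S$-primary property of $P$, and then absorb the witnesses $t_j\in P_j\cap S$ into the product $g(t_1^{n-1},s)\in S$ to land in $g(P_1^{n-1},P)$ or in $\delta(P_1)\cap\cdots\cap\delta(P_{n-1})\cap\delta(P)=\delta(g(P_1^{n-1},P))$. In fact you spell out two steps the paper leaves implicit, namely why $\phi(g(P_1^{n-1},P))=\phi(P)$ justifies the passage to $P\setminus\phi(P)$, and which associativity/commutativity rearrangements underlie the displayed identities.
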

\begin{proof}
It is obvious that  $g(P_1^{n-1},P) \cap S=\varnothing$.  Suppose that $u_1^n \in K$ such that $ g(u_1^n) \in g(P_1^{n-1},P) \backslash \phi(g(P_1^{n-1},P))$. Since $ g(u_1^n) \in P \backslash \phi(P) $ and $P$ is an $n$-ary $\phi$-$\delta$-$S$-primary hyperideal of $K$ associated to $s \in S$, we conclude that $g(s,u_i,1_K^{(n-2)}) \in P$ or $g(u_1^{i-1},s,u_{i+1}^n) \in \delta(P)$ for some $i \in \{1,\cdots,n\}$. Now, take  $t_j \in  P_j \cap S$ for any $j \in \{1,\cdots,n-1\}$. Therefore we get  $g(t_1^{n-1},s) \in S$. If $g(s,u_i,1_K^{(n-2)}) \in P$, then  $g(g(t_1^{n-1},s),u_i,1_K^{n-2})=g(t_1^{n-1},g(s,u_i,1_K^{(n-2)})) \in g(P_1^{n-1},P)$. If $g(u_1^{i-1},s,u_{i+1}^n) \in \delta(Q)$,  then we get $g(u_1^{i-1},
g(t_1^{n-1},s),u_{i+1}^n))=g(t_1^{n-1},g(u_1^{i-1},s,u_{i+1}^n)) \in \delta(P_1) \cap \cdots \cap \delta(P_{n-1}) \cap \delta(P)=\delta(g(P_1^{n-1},P))$. Thus, $g(P_1^{n-1},P)$ is an $n$-ary $\phi$-$\delta$-$S$-primary hyperideal of $K$.
\end{proof}
Recall from \cite{mah3} that a proper hyperideal $P$ of $K$ is called $n$-ary $\delta$-primary if for all $u_1^n \in K$, $g(u_1^n) \in P$  implies that  $u_i \in P$ or $g(u_1^{i-1},1_K,u_{i+1}^n) \in \delta(P)$ for some $i \in \{1,\cdots,n\}$. Furthermore, a hyperideal $P$ of $K$ is said to be a $\delta$-$S$-primary hyperideal associated to  $s \in S$, if whenever $g(u_1^n) \in P$, then $g(u_i,s,1_K^{(n-2)}) \in P$ or $g(u_1^{i-1},s,u_{i+1}^n) \in \delta(P)$ for all $u_1^n \in K$.
\begin{theorem}\label{hadi}
Let $\phi(I)=\phi^2(I)$ for every hyperideal $I$ of $K$. Then every $n$-ary  $\phi$-$\delta$-$S$-primary hyperideal of $K$ is  $n$-ary $\delta$-primary if and only if  $\phi(P)$ is an $n$-ary $\delta$-primary hyperideal of $K$ for each hyperideal $P$ in $K$ and every $n$-ary $\delta$-$S$-primary hyperideal of $K$ is $n$-ary $\delta$-primary.
\end{theorem}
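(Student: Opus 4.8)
The plan is to establish the two implications of the biconditional separately, and two small observations will streamline both. First, because $\delta$ is an expansion function it is monotone, so $\phi(P)\subseteq P$ gives $\delta(\phi(P))\subseteq\delta(P)$. Second, any $n$-ary $\delta$-$S$-primary hyperideal is automatically $n$-ary $\phi$-$\delta$-$S$-primary associated to the same $s$, since passing from the hypothesis $g(u_1^n)\in P$ to $g(u_1^n)\in P\backslash\phi(P)$ only weakens the defining implication.

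For the forward implication, assume every $n$-ary $\phi$-$\delta$-$S$-primary hyperideal is $n$-ary $\delta$-primary. That every $\delta$-$S$-primary hyperideal is $\delta$-primary is then immediate from the second observation above: such a hyperideal is $\phi$-$\delta$-$S$-primary, hence $\delta$-primary by assumption. To see that $\phi(P)$ is $\delta$-primary for an arbitrary hyperideal $P$, I would exploit the idempotency $\phi=\phi^2$, which yields $\phi(\phi(P))=\phi(P)$ and therefore $\phi(P)\backslash\phi(\phi(P))=\varnothing$; consequently the defining condition of a $\phi$-$\delta$-$S$-primary hyperideal holds vacuously for $\phi(P)$, so $\phi(P)$ is $\phi$-$\delta$-$S$-primary and the assumption makes it $\delta$-primary.

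For the converse, assume $\phi(P)$ is $\delta$-primary for every $P$ and that every $\delta$-$S$-primary hyperideal is $\delta$-primary. Given an $n$-ary $\phi$-$\delta$-$S$-primary hyperideal $P$ associated to $s\in S$, the strategy is to prove that $P$ is $\delta$-$S$-primary and then quote the second assumption. So I take $g(u_1^n)\in P$ and split on membership in $\phi(P)$. When $g(u_1^n)\in P\backslash\phi(P)$, the $\phi$-$\delta$-$S$-primary property directly yields $g(u_i,s,1_K^{(n-2)})\in P$ or $g(u_1^{i-1},s,u_{i+1}^n)\in\delta(P)$, which is exactly the required $\delta$-$S$-primary alternative. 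When $g(u_1^n)\in\phi(P)$, the $\delta$-primality of $\phi(P)$ gives $u_i\in\phi(P)\subseteq P$ or $g(u_1^{i-1},1_K,u_{i+1}^n)\in\delta(\phi(P))\subseteq\delta(P)$; multiplying each alternative by $s$ and using the scalar identity $g(s,1_K^{(n-1)})=s$ together with the hyperideal property upgrades them to $g(u_i,s,1_K^{(n-2)})\in P$ and $g(u_1^{i-1},s,u_{i+1}^n)\in\delta(P)$, respectively. Hence $P$ is $\delta$-$S$-primary, and the assumption delivers the conclusion.

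I expect the only essential use of the hypotheses to be the idempotency $\phi=\phi^2$ in the forward implication, since that is precisely what turns $\phi(P)$ into a (vacuously) $\phi$-$\delta$-$S$-primary hyperideal one can feed into the assumption; everything else reduces to the routine manipulations above, relying only on commutativity and associativity of $g$, the scalar identity, and monotonicity of $\delta$. The main point to handle with care is that the notions of $\delta$-primary and $\phi$-$\delta$-$S$-primary presuppose a proper hyperideal disjoint from $S$, so I would either restrict attention to those $P$ for which $\phi(P)$ is proper and disjoint from $S$, or dispatch the degenerate cases separately, before invoking the vacuous-implication argument.
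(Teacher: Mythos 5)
Your proposal is correct and takes essentially the same route as the paper's own proof: the forward direction uses idempotency of $\phi$ to make $\phi(P)$ vacuously $\phi$-$\delta$-$S$-primary and the observation that every $\delta$-$S$-primary hyperideal is $\phi$-$\delta$-$S$-primary, while the converse splits on whether $g(u_1^n)$ lies in $\phi(P)$, uses $\delta$-primality of $\phi(P)$ (with monotonicity $\delta(\phi(P))\subseteq\delta(P)$ and multiplication by $s$) to show $P$ is $\delta$-$S$-primary, and then invokes the second assumption. Your closing caveat about properness of $\phi(P)$ and disjointness from $S$ is a degenerate case the paper itself passes over silently, so it is a refinement rather than a divergence.
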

\begin{proof}
$\Longrightarrow$ Since $\phi(P)=\phi^2(P)$, we get the result that $\phi(P)$ is an $n$-ary $\phi$-$\delta$-$S$-primary hyperideal of $K$. By the hypothesis, we conclude that $\phi(P)$ is an $n$-ary $\delta$-primary hyperideal of $K$.  Let $Q$ be an $n$-ary $\delta$-$S$-primary hyperideal of $K$. This implies that $Q$ is an $n$-ary  $\phi$-$\delta$-$S$-primary hyperideal of $K$ which means $Q$ is $n$-ary $\delta$-primary, by the assumption.

$\Longleftarrow$ Assume that $Q$ is an $n$-ary $\phi$-$\delta$-$S$-primary hyperideal of $K$ associated to $s \in S$. Let $g(u_1^n) \in Q$ for $u_1^n \in K$. If $g(u_1^n) \notin \phi(Q)$, then we have $g(u_1^n) \in Q \backslash \phi(Q)$. It follows that $g(s,u_i,1_K^{(n-2)}) \in Q$ or $g(u_1^{i-1},s,u_{i+1}^n) \in \delta(Q)$ for some $i \in \{1,\cdots,n\}$. This means that $Q$ is an $n$-ary $\delta$-$S$-primary hyperideal of $K$ and so $Q$ is an $n$-ary $\delta$-primary hyperideal. Now, let $g(u_1^n) \in \phi(Q)$. Since $\phi(Q)$ is an $n$-ary $\delta$-primary hyperideal of $K$,  we obtain $u_i \in \phi(Q) \subseteq Q$ for some $i \in \{1,\cdots,n\}$ which means $g(s,u_i,1_K^{(n-2)}) \in Q$ or $g(u_1^{i-1},1_K,u_{i+1}^n) \in \delta(\phi(Q))$ which means $g(u_1^{i-1},s,u_{i+1}^n) \in \delta(\phi(Q)) \subseteq \delta(Q)$. This implies that $Q$ is an $n$-ary  $\delta$-$S$-primary hyperideal of $K$ and so $Q$ is an $n$-ary $\delta$-primary hyperideal of $K$.
\end{proof}
Assume that  $P$ is a proper hyperideal of $K$. $P$ is called an $n$-ary $S$-primary hyperideal of $K$ associated to $s \in S$ if  for all $u_1^n \in K$, $g(u_1^n) \in P$ implies  $g(s,u_i,1_K^{(n-2)}) \in P$ or $g(u_1^{i-1},s,u_{i+1}^n) \in rad(P)$ for some $i \in \{1,\cdots,n\}$ \cite{mah6}. 
Moreover,  $P$ refers to a weakly $n$-ary $S$-primary hyperideal associated to $s \in S$  if  for all $u_1^n \in K$ with $0 \neq g(u_1^n) \in P$, we have $g(s,u_i,1_K^{(n-2)}) \in P$ or $g(u_1^{i-1},s,u_{i+1}^n) \in rad(P)$ for some $i \in \{1,\cdots,n\}$ \cite{mah8}. Now, we have the following
result as a consequence of Theorem \ref{hadi}.
\begin{corollary}
Every weakly $n$-ary $S$-primary hyperideal of $K$ is primary if and only if $K$ is an $n$-ary  hyperintegral domain and every $n$-ary $S$-primary hyperideal of $K$ is an $n$-ary primary hyperideal.
\end{corollary}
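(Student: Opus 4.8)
The plan is to obtain this corollary as the special case of Theorem \ref{hadi} corresponding to the reduction function $\phi=\phi_0$ and the expansion function $\delta=\delta_1$, where $\phi_0(I)=\langle 0\rangle=\{0\}$ and $\delta_1(I)=rad(I)$ for every hyperideal $I$ of $K$. First I would check that the standing hypothesis of Theorem \ref{hadi} holds for this choice: since $\phi_0$ is the constant function with value the zero hyperideal $\langle 0\rangle$, we have $\phi_0^2(I)=\phi_0(\langle 0\rangle)=\langle 0\rangle=\phi_0(I)$, so indeed $\phi_0(I)=\phi_0^2(I)$ for every hyperideal $I$.

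Next I would translate the four notions occurring in Theorem \ref{hadi} under these choices. Because $P\backslash\phi_0(P)=P\backslash\{0\}$ and $\delta_1(P)=rad(P)$, the condition defining an $n$-ary $\phi_0$-$\delta_1$-$S$-primary hyperideal is literally the condition ``$0\neq g(u_1^n)\in P$ implies $g(s,u_i,1_K^{(n-2)})\in P$ or $g(u_1^{i-1},s,u_{i+1}^n)\in rad(P)$'' defining a weakly $n$-ary $S$-primary hyperideal; similarly an $n$-ary $\delta_1$-$S$-primary hyperideal is exactly an $n$-ary $S$-primary hyperideal, and an $n$-ary $\delta_1$-primary hyperideal is exactly an $n$-ary primary hyperideal. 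Finally, since $\phi_0(P)=\langle 0\rangle$ for every hyperideal $P$, the clause ``$\phi(P)$ is $n$-ary $\delta$-primary for each hyperideal $P$'' in Theorem \ref{hadi} collapses to the single requirement that $\langle 0\rangle$ be an $n$-ary primary hyperideal of $K$. With these identifications, Theorem \ref{hadi} reads: every weakly $n$-ary $S$-primary hyperideal of $K$ is $n$-ary primary if and only if $\langle 0\rangle$ is $n$-ary primary and every $n$-ary $S$-primary hyperideal of $K$ is $n$-ary primary.

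It then remains to replace the clause ``$\langle 0\rangle$ is $n$-ary primary'' by ``$K$ is an $n$-ary hyperintegral domain'', and this identification is where I expect the real work to lie. One direction is immediate: if $K$ is an $n$-ary hyperintegral domain then $g(u_1^n)=0$ forces some $u_i=0\in\langle 0\rangle$, so $\langle 0\rangle$ is $n$-ary primary (indeed $n$-ary prime). The main obstacle is the converse, namely upgrading the $n$-ary primary property of $\langle 0\rangle$ to the domain property $g(u_1^n)=0\Rightarrow u_i=0$ for some $i$; this is delicate precisely because a primary zero hyperideal need not a priori be prime, so one must rule out the radical branch $g(u_1^{i-1},1_K,u_{i+1}^n)\in rad(\langle 0\rangle)$ occurring without a genuine zero factor. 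I would attack this by exploiting that $rad(\langle 0\rangle)$ is the intersection of all $n$-ary prime hyperideals of $K$ together with Theorem 4.28 in \cite{sorc1} (the radical of an $n$-ary primary hyperideal is $n$-ary prime), showing that under the hypotheses this radical collapses to $\langle 0\rangle$ and hence the primary condition for $\langle 0\rangle$ forces $n$-ary primeness. Once the equivalence ``$\langle 0\rangle$ is $n$-ary primary $\iff$ $K$ is an $n$-ary hyperintegral domain'' is secured, substituting it into the biconditional displayed above yields exactly the statement of the corollary.
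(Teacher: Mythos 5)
Your specialization of Theorem \ref{hadi} with $\phi=\phi_0$ and $\delta=\delta_1$, together with the dictionary ($\phi_0$-$\delta_1$-$S$-primary $=$ weakly $S$-primary, $\delta_1$-$S$-primary $=$ $S$-primary, $\delta_1$-primary $=$ primary, and the clause on $\phi(P)$ collapsing to ``$\langle 0\rangle$ is $n$-ary primary''), is exactly what the paper does --- its entire proof is that one substitution. You are also right to isolate the remaining step, namely replacing ``$\langle 0\rangle$ is an $n$-ary primary hyperideal'' (which is what the theorem literally yields) by ``$K$ is an $n$-ary hyperintegral domain'', as the place where all the content lies; the paper performs this identification silently.

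However, your plan for that step cannot be carried out. The hoped-for collapse $rad(\langle 0\rangle)=\langle 0\rangle$ does not follow from the available hypotheses, and in fact the implication you need is false. Take $K=\mathbb{Z}_4$, an ordinary commutative ring and hence a Krasner $(2,2)$-hyperring, with $S=\{1\}$. Its proper hyperideals are $\langle 0\rangle$ and $\langle 2\rangle$, and both are $2$-ary primary (for $\langle 0\rangle$, the only nontrivial relation is $g(2,2)=0$, and $2\in rad(\langle 0\rangle)=\langle 2\rangle$). So $\langle 0\rangle$ is primary and every weakly $S$-primary hyperideal and every $S$-primary hyperideal of $K$ is primary, yet $rad(\langle 0\rangle)=\langle 2\rangle\neq\langle 0\rangle$ and $K$ is not an $n$-ary hyperintegral domain, since $g(2,2)=0$ with $2\neq 0$. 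Thus primaryness of the zero hyperideal is genuinely weaker than the domain property (only the direction ``domain $\Rightarrow$ $\langle 0\rangle$ primary'' holds), and no argument can bridge them; the example even shows that the ``only if'' direction of the corollary as stated fails, so the defect you ran into lies in the statement and in the paper's one-line proof rather than in your bookkeeping. What the specialization of Theorem \ref{hadi} actually proves is the corollary with ``$K$ is an $n$-ary hyperintegral domain'' replaced by ``$\langle 0\rangle$ is an $n$-ary primary hyperideal of $K$''; your argument is correct up to that point and should stop there.
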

\begin{proof}
By taking $\delta=\delta_1$ and $\phi=\phi_0$ in Theorem \ref{hadi}, we are done.
\end{proof}
\begin{proposition} \label{2pezeshk}
Assume that  $S \subseteq T$ are two $n$-ary multiplicative subsets of $K$ such that for each $s \in T$, there exists $s^\prime \in T$ with $g(s^{(n-1)},s^{\prime}) \in S$. If $P$ is an  $n$-ary $\phi$-$\delta$-$T$-primary hyperideal of $K$ associated to $s \in T$, then $P$ is an $n$-ary $\phi$-$\delta$-$S$-primary hyperideal of $P$.
\end{proposition}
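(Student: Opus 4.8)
The plan is to exhibit a concrete element of $S$ to which $P$ is associated, built from the given $s \in T$. First I would fix the element $s \in T$ for which $P$ is $\phi$-$\delta$-$T$-primary and, invoking the hypothesis, choose $s^{\prime} \in T$ with $\sigma := g(s^{(n-1)}, s^{\prime}) \in S$; this $\sigma$ will be the associated element in $S$. Since $P$ is $\phi$-$\delta$-$T$-primary it is disjoint from $T$, and because $S \subseteq T$ we get $P \cap S = \varnothing$ at once, so $P$ is eligible to be $S$-primary.

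Next I would take an arbitrary $u_1^n \in K$ with $g(u_1^n) \in P \backslash \phi(P)$ and apply the $T$-primary hypothesis associated to $s$: for some $i$, either $g(u_i, s, 1_K^{(n-2)}) \in P$ or $g(u_1^{i-1}, s, u_{i+1}^n) \in \delta(P)$. The goal is to upgrade each alternative from $s$ to $\sigma$. In the first case, since $P$ is a hyperideal it absorbs $g$-multiplication, so $g(g(u_i, s, 1_K^{(n-2)}), s^{(n-2)}, s^{\prime}) \in P$; by the $n$-ary associativity of the semigroup $(K,g)$ together with commutativity of $g$, this composite equals $g(u_i, g(s^{(n-1)}, s^{\prime}), 1_K^{(n-2)}) = g(u_i, \sigma, 1_K^{(n-2)})$, whence $g(u_i, \sigma, 1_K^{(n-2)}) \in P$. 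In the second case the same regrouping, applied now to $\delta(P)$ (which is again a hyperideal), gives $g(g(u_1^{i-1}, s, u_{i+1}^n), s^{(n-2)}, s^{\prime}) = g(u_1^{i-1}, \sigma, u_{i+1}^n) \in \delta(P)$. Either way the defining condition for $\sigma \in S$ is met, so $P$ is $n$-ary $\phi$-$\delta$-$S$-primary associated to $\sigma$.

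The main obstacle is purely the bookkeeping in the regrouping step, namely justifying that $g(g(u_i, s, 1_K^{(n-2)}), s^{(n-2)}, s^{\prime})$ really equals $g(u_i, \sigma, 1_K^{(n-2)})$. I would handle this by flattening both composites, via the generalized (Dörnte-type) associativity valid in any $n$-ary semigroup, into single products of $2n-1$ factors, and observing that both produce the same multiset of factors, namely one $u_i$, one $s^{\prime}$, $n-1$ copies of $s$, and $n-2$ copies of $1_K$; commutativity of $g$ then forces equality, and the identical computation handles the $\delta(P)$ case with $u_i$ in position $i$ replaced by the full string $u_1^{i-1}, -, u_{i+1}^n$. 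Note that $\phi$ and $\delta$ play no active role here beyond $\phi(P) \subseteq P$ and $\delta(P)$ being a hyperideal, since the whole argument turns on replacing $s$ by $\sigma$ inside a single product; consequently no compatibility conditions on $\phi$ or $\delta$ are needed for this proposition.
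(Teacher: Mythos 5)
Your proposal is correct and follows essentially the same route as the paper: both proofs take the associated element to be $t=g(s^{(n-1)},s^{\prime})\in S$, apply the $\phi$-$\delta$-$T$-primary hypothesis to $g(u_1^n)\in P\backslash\phi(P)$, and then upgrade each alternative from $s$ to $t$ by regrouping the product (via associativity and commutativity of $g$) so that the known element of $P$, respectively of $\delta(P)$, appears as a factor and absorption does the rest. The only differences are cosmetic bookkeeping in how the regrouping is displayed, plus your explicit check that $P\cap S=\varnothing$, which the paper leaves implicit.
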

\begin{proof}
Assume that $u_1^n \in K$ such that $ g(u_1^n) \in P \backslash \phi(P)$. Since $P$ is an  $n$-ary $\phi$-$\delta$-$T$-primary hyperideal of $K$ associated to $s \in T$,  we have $g(s,u_i,1_K^{(n-2)}) \in P$ or $g(u_1^{i-1},s,u_{i+1}^n) \in \delta(P)$ for some $i \in \{1,\cdots,n\}$. By the assumption, there exists $s^{\prime} \in T$ such that $g(s^{(n-1)},s^\prime) \in S$. Put $t=g(s^{(n-1)},s^\prime)$. Let  $g(s,u_i,1^{(n-2)}) \in P$. Then we get the result that   $g(t,u_i,1_K^{(n-2)})=g(g(s^{(n-1)},s^{\prime}),u_i,1_K^{(n-2)})
=g(g(s^{(n-2)},s^{\prime},1_K),g(s,u_i,1^{(n-2)}),1_K^{(n-2)}) \in P$. Now, assume that $g(u_1^{i-1},s,u_{i+1}^n) \in \delta(P)$. Therefore we conclude that  $g(u_1^{i-1},t,u_{i+1}^n)=g(u_1^{i-1},g(s^{(n-1)},s^{\prime}),u_{i+1}^n)
=g(g(s^{n-2},s^{\prime},1_A),g(u_1^{i-1},s,u_{i+1}^n),1_K^{(n-2)}) \in \delta(P)$. Thus $P$ is an $n$-ary $\phi$-$\delta$-$S$-primary hyperideal of $P$.
\end{proof}
The notion of Krasner $(m,n)$-hyperring of fractions was studied in \cite{mah5}.
\begin{theorem} \label{}
Let  $P$ be a hyperideal of $A$ and $1_K \in S$. Then $P$ is an $n$-ary $\phi$-$\delta$-$S$-primary  hyperideal of $K$ if and only if $P$ is an  $n$-ary $\phi$-$\delta$-$S^{\prime}$-primary  hyperideal where $S^{\prime}=\{a \in K \ \vert \ \frac{a}{1_K} \ \text{is invertible in} \ S^{-1}K \}$.
\end{theorem}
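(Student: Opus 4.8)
The plan is to recognize $S^{\prime}$ as the \emph{saturation} of $S$ and to reduce both implications to structural facts about $S^{\prime}$ together with Proposition~\ref{2pezeshk}. The decisive technical point is the characterization
\[
a \in S^{\prime} \iff \exists\, b \in K \text{ with } g(a,b,1_K^{(n-2)}) \in S,
\]
i.e. $\frac{a}{1_K}$ is a unit of $S^{-1}K$ exactly when some binary multiple $g(a,b,1_K^{(n-2)})$ of $a$ lands in $S$. I would prove this first. For the nontrivial direction, invertibility of $\frac{a}{1_K}$ gives $\frac{b}{t}\in S^{-1}K$ with $g(\frac{a}{1_K},\frac{b}{t},(\frac{1_K}{1_K})^{(n-2)})=\frac{1_K}{1_K}$; computing the left side as $\frac{g(a,b,1_K^{(n-2)})}{t}$ and applying the equality relation of the Krasner $(m,n)$-hyperring of fractions of \cite{mah5} yields some $w\in S$ with $g(w,g(a,b,1_K^{(n-2)}),1_K^{(n-2)})=g(w,t,1_K^{(n-2)})$. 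Since $1_K\in S$, the right-hand element lies in $S$, and by commutativity and associativity of $g$ the left-hand element equals $g(a,\tilde b,1_K^{(n-2)})$ with $\tilde b=g(w,b,1_K^{(n-2)})$, which is the desired witness. The converse is immediate, since $\frac{\tilde b}{g(a,\tilde b,1_K^{(n-2)})}$ inverts $\frac{a}{1_K}$.

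From this characterization three facts follow at once. Taking $b=1_K\in S$, so that $g(s,1_K,1_K^{(n-2)})=s\in S$, gives $S\subseteq S^{\prime}$. That $S^{\prime}$ is again $n$-ary multiplicative follows because the induced binary operation $x\cdot y:=g(x,y,1_K^{(n-2)})$ makes $K$ a commutative monoid with identity $1_K$, in which $g(a_1^n)$ is just the monoid product $a_1\cdots a_n$ and the saturation of a multiplicative set is multiplicative. Finally, $P\cap S=\varnothing$ forces $P\cap S^{\prime}=\varnothing$: if $a\in P\cap S^{\prime}$ then $g(a,b,1_K^{(n-2)})\in S$ for some $b$, while the hyperideal absorption property gives $g(a,b,1_K^{(n-2)})\in P$, contradicting disjointness. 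These facts make both sides of the equivalence well posed.

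For the forward implication I would argue that it is essentially formal: if $P$ is $n$-ary $\phi$-$\delta$-$S$-primary associated to some $s\in S$, then $s\in S^{\prime}$ and $P\cap S^{\prime}=\varnothing$, and the defining condition ``$g(u_i,s,1_K^{(n-2)})\in P$ or $g(u_1^{i-1},s,u_{i+1}^n)\in\delta(P)$ whenever $g(u_1^n)\in P\backslash\phi(P)$'' refers only to the single element $s$; hence $P$ is $n$-ary $\phi$-$\delta$-$S^{\prime}$-primary associated to the same $s$. For the reverse implication I would invoke Proposition~\ref{2pezeshk} with $T=S^{\prime}$. Its first hypothesis $S\subseteq S^{\prime}$ is in hand, so everything comes down to verifying that for each $a\in S^{\prime}$ there is $a^{\prime}\in S^{\prime}$ with $g(a^{(n-1)},a^{\prime})\in S$. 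Taking the witness $g(a,\tilde b,1_K^{(n-2)})=s_0\in S$ from the characterization and setting $a^{\prime}:=g(\tilde b^{\,(n-1)},1_K)$, the monoid identities give $g(a^{(n-1)},a^{\prime})=(a\cdot\tilde b)^{\,n-1}=g(s_0^{(n-1)},1_K)\in S$ since $s_0,1_K\in S$, and $a^{\prime}\in S^{\prime}$ because $g(a^{(n-1)},a^{\prime})\in S$ exhibits a multiple of $a^{\prime}$ in $S$. Proposition~\ref{2pezeshk} then delivers that $P$ is $n$-ary $\phi$-$\delta$-$S$-primary.

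The main obstacle I anticipate is matching the shape of the witness: the saturation characterization naturally produces an element $\tilde b$ with the \emph{binary} product $g(a,\tilde b,1_K^{(n-2)})$ in $S$, whereas Proposition~\ref{2pezeshk} demands the \emph{$(n-1)$-fold} product $g(a^{(n-1)},a^{\prime})$ in $S$. Bridging the two requires systematic use of the commutative-monoid structure $(K,\cdot,1_K)$ induced by $g$, together with $1_K\in S$, to convert binary multiples into $(n-1)$-ary ones, and careful bookkeeping of the equality relation in $S^{-1}K$ to extract the witness in the first place.
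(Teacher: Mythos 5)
Your proof is correct and takes essentially the same route as the paper: the forward direction is formal from $S \subseteq S^{\prime}$, and the backward direction reduces to Proposition~\ref{2pezeshk} with $T = S^{\prime}$ by using the equality relation in $S^{-1}K$ to produce, for each $a \in S^{\prime}$, a witness $a^{\prime} \in S^{\prime}$ with $g(a^{(n-1)},a^{\prime}) \in S$ --- precisely the computation the paper performs inline. Your explicit saturation lemma and the verification that $P \cap S^{\prime} = \varnothing$ (needed for the forward direction to be well posed) are just cleaner packaging of steps the paper leaves implicit.
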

\begin{proof}
$\Longrightarrow$ Since $S^{\prime}$ is an $n$-ary multiplicative subset of $K$ containing $S$ and $P$ is an $n$-ary $\phi$-$\delta$-$S$-primary  hyperideal of $K$, we are done.

$\Longleftarrow$ Assume that $s \in S^{\prime}$. This implies that $\frac{s}{1_K}$ is invertible in $S^{-1}K$. Then there exists $u \in K$ and $t \in S$ such that $G(\frac{s}{1_A},\frac{u}{t},\frac{1_K}{1_K}^{(n-2)})=\frac{g(s,u,1_K^{(n-2)})}{g(t,1_K^{(n-1)})}=\frac{1_K}{1_K}$.  So there exists $t^{\prime} \in S$ with $0 \in g(t^{\prime}, f(g(s,u,1_K^{(n-2)}),-g(t,1_K^{(n-1)}),0^{(m-2)}),1_K^{(n-2)})=f(g(t^{\prime}, s,u,1_K^{(n-3)}),-g(t^{\prime},t,1_K^{(n-2)}),0^{(m-2)})$. 
Since  $g(t^{\prime},t,1_K^{(n-2)}) \in S$, we get the result that $g(t^{\prime}, s,u,1_K^{(n-3)}) \in f(g(t^{\prime},t,1_K^{(n-2)}),0^{(m-1)}) \subseteq S$.
Let  $t^{\prime \prime }=g(t^{\prime},u,1_K^{(n-2)})$. Since $G(\frac{g(t^{\prime},u,1_K^{(n-2)})}{1_A},\frac{g(s,1_K^{(n-1)})}{g(t^{\prime},u,s,1_K^{(n-3)})},\frac{1_K}{1_K}^{(n-2)})=\frac{g(t^{\prime},u,s,1_K^{(n-3)})}{g(t^{\prime},u,s,1_K^{(n-3)})}=\frac{1_K}{1_K}$, we get $t^{\prime \prime } \in S^{\prime}$.  Therefore we obtain $g(s^{(n-1)},g({t^{\prime \prime}}^{(n-1)},g(s,t^{\prime \prime},1_K^{(n-2)})))=g(g(s,t^{\prime \prime},1_K^{(n-2)})^n) \in S$. Put  $s^{\prime}=g({t^{\prime \prime}}^{(n-1)},g(s,t^{\prime \prime},1_K^{(n-2)}))$. So $s^{\prime} \in S^{\prime}$. Since $g(s^{(n-1)},s^{\prime}) \in S$,  we conclude that $P$ is an  $n$-ary $\phi$-$\delta$-$S$-primary  hyperideal of $K$, by Proposition \ref{2pezeshk}.
\end{proof}
A proper hyperideal $P$ of $K$  is called a strongly $n$-ary $\phi$-$\delta$-$S$-primary hyperideal of $K$ associated to  $s \in S$, if whenever $g(P_1^n) \subseteq P \backslash \phi(P)$, then $g(P_i,s,1_K^{(n-2)}) \subseteq P$ or $g(P_1^{i-1},s,P_{i+1}^n) \subseteq \delta(P)$ for all hyperideals $P_1^n$ of $ K$. Note that every strongly $\phi$-$\delta$-$S$-primary hyperideal of $K$ is a $\phi$-$\delta$-$S$-primary hyperideal of $K$.
\begin{theorem} \label{7} 
Let $P$ be a strongly $n$-ary $\phi$-$\delta$-$S$-primary hyperideal of $K$ associated to $s \in S$. If $g(u_1^n) \in \phi(P)$ for $u_1^n \in K$ but $g(s,u_i,1_k^{(n-2)}) \notin P$ and $g(u_1^{i-1},s,u_{i+1}^n) \notin \delta(P)$ for all $i \in \{1,\cdots,n\}$, then for each $j_1,\cdots,j_r \in \{1,\cdots,n\}$, $g(u_1,\cdots,\widehat{u_{j_1}}, \cdots,\widehat{u_{j_2}},\cdots, \widehat{u_{j_r}}, \cdots, P^{(r)}) \subseteq \phi(P)$ . 
\end{theorem}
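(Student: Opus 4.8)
The plan is to argue by induction on $r$. Throughout I will use freely that $P$, $\phi(P)$ and $\delta(P)$ are hyperideals, hence canonical subhypergroups of $(K,f)$ closed under $f$ and under the additive inverse; that $g$ is commutative and distributes over $f$ (axiom 3); and that a strongly $n$-ary $\phi$-$\delta$-$S$-primary hyperideal is in particular $n$-ary $\phi$-$\delta$-$S$-primary in the element-wise sense. The decisive structural fact is the reversibility (reproducibility) of the canonical $m$-ary hypergroup: if $c\in f(a,b,0^{(m-2)})$ and one summand lies in a hyperideal $J$, then $c\in J$ forces the other summand into $J$; I will use this repeatedly to ``solve'' hyper-sums. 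By the commutativity of $f$ and $g$ I may relabel indices, so I fix a choice of distinct $j_1,\dots,j_r$ and assume without loss of generality that they are the last $r$ positions $n-r+1,\dots,n$.

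For the base case $r=1$ I must show $g(u_1^{n-1},p)\in\phi(P)$ for every $p\in P$. Suppose not, say $g(u_1^{n-1},p)\notin\phi(P)$. For any $w\in f(u_n,p,0^{(m-2)})$, distributivity gives $g(u_1^{n-1},w)\in f(g(u_1^n),g(u_1^{n-1},p),0^{(m-2)})$; since $g(u_1^n)\in\phi(P)$ while $g(u_1^{n-1},p)\in P\backslash\phi(P)$, reversibility shows that every element of this hyper-sum lies in $P\backslash\phi(P)$, whence $g(u_1^{n-1},w)\in P\backslash\phi(P)$. Applying the $\phi$-$\delta$-$S$-primary property to $g(u_1^{n-1},w)$ yields an index $i$ with $g(s,(\text{$i$-th entry}),1_K^{(n-2)})\in P$ or the complementary $s$-product in $\delta(P)$. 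If $i\le n-1$, the first option contradicts the hypothesis directly, while the second, after expanding $w$ and discarding the $p$-term (which lies in $\delta(P)$), forces $g(u_1^{i-1},s,u_{i+1}^n)\in\delta(P)$, again a contradiction. If $i=n$, the complementary product is $g(u_1^{n-1},s)$, contradicting the hypothesis directly, whereas $g(s,w,1_K^{(n-2)})\in P$ expands and, discarding the $p$-term, forces $g(s,u_n,1_K^{(n-2)})\in P$, a contradiction. Thus no such $p$ exists.

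For the inductive step I assume the statement for every $r'<r$ and suppose, for contradiction, that $g(u_1^{n-r},p_1,\dots,p_r)\notin\phi(P)$ for some $p_1,\dots,p_r\in P$. The key move is to perturb all $r$ removed slots at once: choose $w_k\in f(u_{n-r+k},p_k,0^{(m-2)})$ and consider $g(u_1^{n-r},w_1,\dots,w_r)$. Expanding each $w_k$ by distributivity places this element inside an iterated hyper-sum of the $2^r$ ``corner'' products obtained by replacing each perturbed slot by either $u_{n-r+k}$ or $p_k$. A corner in which exactly $t$ slots carry a $p$ is, after relabelling, the product in which $t$ of the original entries are replaced by elements of $P$; for $0\le t\le r-1$ the induction hypothesis (and, for $t=0$, the hypothesis $g(u_1^n)\in\phi(P)$) places it in $\phi(P)$, while the unique $t=r$ corner is the chosen element outside $\phi(P)$. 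Reversibility then forces $g(u_1^{n-r},w_1,\dots,w_r)\in P\backslash\phi(P)$.

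Finally I apply the $\phi$-$\delta$-$S$-primary property to this perturbed product and run the same case analysis as in the base case, now at whichever index $i$ is returned. When $i$ is a non-removed position the argument is verbatim that of the base case; when $i$ is a perturbed slot $n-r+k$, expanding $w_k$ and discarding the $p_k$-term reduces the two options to $g(s,u_{n-r+k},1_K^{(n-2)})\in P$ or $g(u_1^{(n-r+k)-1},s,u_{(n-r+k)+1}^n)\in\delta(P)$, both excluded by hypothesis; for the $\delta(P)$-option one must additionally expand the remaining $w_{k'}$'s and discard their $p_{k'}$-terms before solving. Every branch is contradictory, so the assumption fails and the containment holds. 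The main obstacle is bookkeeping rather than conceptual: one must manage the iterated $m$-ary hyper-sum of $2^r$ corners and justify the repeated reversibility ``solving'' inside $P$, $\phi(P)$ and $\delta(P)$, and one must notice that a single-slot perturbation would leave a pure-$P$ entry at which the primary condition is satisfied trivially — it is precisely the simultaneous perturbation of all removed slots that closes this loophole.
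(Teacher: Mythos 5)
Your proposal is correct and takes essentially the same route as the paper's own proof: induction on $r$, perturbing each removed slot by $f(u_k,p_k,0^{(m-2)})$, using distributivity and the induction hypothesis together with reversibility of the canonical $m$-ary hypergroup to place the perturbed product in $P\backslash\phi(P)$, and then deriving a contradiction from the $\phi$-$\delta$-$S$-primary condition in every branch of the case analysis. If anything, your write-up spells out the $2^{r}$-corner expansion and the repeated reversibility ``solving'' inside $P$, $\phi(P)$ and $\delta(P)$ more explicitly than the paper does.
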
 
\begin{proof}
We use the induction on $r$. Let $r=1$. Suppose on the contrary that $g(u_1^{i-1},P,u_{i+1}^n) \nsubseteq \phi(P)$ for some $i \in \{1,\cdots,n\}$. Then $g(u_1^{i-1},u,u_{i+1}^n) \notin \phi(P)$ for some $u \in P$. Therefore  we get the result that $g(u_1^{i-1},f(u,u_i,0^{(m-2)}),u_{i+1}^n)= f(g(u_1^n),g(u_1^{i-1},u,u_{i+1}^n),0^{(m-2)}) \subseteq P \backslash \phi(P)$. By the hypothesis, we conclude that $g(s,f(u,u_i,0^{(m-2)}),1_K^{(n-2)})=f(g(s,u,1_K^{(n-2)}),g(s,u_i,1_K^{(n-2)}),0^{(m-2)}) \subseteq P$ which implies $g(s,u_i,1_K^{(n-2)}) \in P$ or $g(u_1^{i-1},s,u_{i+1}^n) \in \delta(P)$ for some $i \in \{1,\cdots,n\}$ which are impossible. Now, let the
claim be true for all positive integers being less than $r$. Let $g(u_1,\cdots,\widehat{u_{j_1}}, \cdots,\widehat{u_{j_2}},\cdots, \widehat{u_{j_r}}, \cdots, P^{(r)}) \nsubseteq \phi(P)$ for some $j_1,\cdots,j_r \in \{1,\cdots,n\}$. Without loss of generality, we eliminate $u_1^r$, so $g(u_{r+1},\cdots,u_n,P^{(r)}) \nsubseteq \phi(P)$. Then $ g(u_{r+1},\cdots,u_n,v_1^r) \notin \phi(P)$ for some $v_1^r \in P$. By induction hypothesis, we get $ g(f(u_1,v_1,0^{(m-2)}),\cdots,f(u_r,v_r,0^{(m-2)}),u_{r+1}^n) \subseteq P \backslash \phi(P)$. Since $P$ is a strongly $n$-ary $\phi$-$\delta$-$S$-primary hyperideal of $K$,  $g(s,f(u_i,v_i,0^{(m-2)}),1_K^{(n-2)}) \subseteq P$  or $g(f(u_1,v_1),\cdots,\widehat{f(u_i,v_i,0^{(m-2)})},\cdots,f(u_r,v_r,0^{(m-2)}),s,u_{r+1}^n) \subseteq \delta(P)$  for some $i \in \{1,\cdots,r\}$ or   $g(f(u_1,v_n,0^{(m-2)}),\cdots,f(u_r,v_r,0^{(m-2)}),u_{r+1}^{j-1},s, u_{j+1}^n) \subseteq \delta(Q)$ or $g(s,u_j,1_K^{(n-2)}) \in P$ for some $j \in \{r+1,\cdots,n\}$. This implies that $g(s,u_i,1_K^{(n-2)}) \in P$ or $g(u_1^{i-1},s,u_{i+1}^n) \in \delta(P)$ for some $i \in \{1,\cdots,n\}$ which are  impossible. Thus $g(u_1,\cdots,\widehat{u_{j_1}}, \cdots,\widehat{u_{j_2}},\cdots, \widehat{u_{j_r}}, \cdots, P^{(r)}) \subseteq \phi(P)$ for each $j_1,\cdots,j_r \in \{1,\cdots,n\}$.  
\end{proof}
\begin{theorem} \label{8}
Let $P$ be  a strongly  $n$-ary $\phi$-$\delta$-$S$-primary hyperideal of $K$ associated to $s \in S$ but is not an $n$-ary $\delta$-$S$-primary hyperideal. Then $g(P^{(n)}) \subseteq \phi(P)$.
\end{theorem}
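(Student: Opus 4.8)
The plan is to exploit the failure of the $n$-ary $\delta$-$S$-primary property to manufacture witnessing elements and then feed them into Theorem \ref{7}, which was set up precisely for this purpose. First I would unwind the negation of the defining condition: since $P$ is \emph{not} an $n$-ary $\delta$-$S$-primary hyperideal associated to $s$, there must exist $u_1^n \in K$ with $g(u_1^n) \in P$ for which the primary alternative fails at every coordinate, i.e. for all $i \in \{1,\cdots,n\}$ we have both $g(s,u_i,1_K^{(n-2)}) \notin P$ and $g(u_1^{i-1},s,u_{i+1}^n) \notin \delta(P)$.

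Next I would locate $g(u_1^n)$ inside $\phi(P)$. Recall from the text that every strongly $n$-ary $\phi$-$\delta$-$S$-primary hyperideal is in particular an $n$-ary $\phi$-$\delta$-$S$-primary hyperideal. If $g(u_1^n)$ were to lie in $P \backslash \phi(P)$, the defining property of the latter would force $g(s,u_i,1_K^{(n-2)}) \in P$ or $g(u_1^{i-1},s,u_{i+1}^n) \in \delta(P)$ for some $i$, directly contradicting the choice of $u_1^n$ in the first step. Hence $g(u_1^n) \in \phi(P)$, and the elements $u_1^n$ now satisfy exactly the hypotheses of Theorem \ref{7} (using commutativity of $g$ to identify $g(s,u_i,1_K^{(n-2)})$ with $g(u_i,s,1_K^{(n-2)})$).

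Finally I would apply Theorem \ref{7} to these $u_1^n$ at the top value $r = n$: taking $\{j_1,\cdots,j_n\} = \{1,\cdots,n\}$ eliminates every $u_j$ from the product, so the conclusion $g(u_1,\cdots,\widehat{u_{j_1}},\cdots,\widehat{u_{j_n}},\cdots,P^{(n)}) \subseteq \phi(P)$ degenerates precisely to $g(P^{(n)}) \subseteq \phi(P)$, which is the desired claim.

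The argument is short, and the only real content lies in the bookkeeping of the first two steps; the main (and essentially only) obstacle is recognizing that Theorem \ref{7} is engineered to be invoked at $r = n$, after which the statement follows immediately with no genuine hyperstructure computation beyond the commutativity noted above.
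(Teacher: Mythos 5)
Your proposal is correct, but it is a genuinely more streamlined route than the paper's. The paper argues by contraposition: it assumes $g(P^{(n)}) \nsubseteq \phi(P)$, takes an \emph{arbitrary} $u_1^n$ with $g(u_1^n) \in P$, and shows the primary alternatives always hold, invoking Theorem \ref{7} only for mixed products with $r \in \{1,\cdots,n-1\}$; in the remaining case (all such mixed products inside $\phi(P)$) it picks $v_1^n \in P$ with $g(v_1^n) \notin \phi(P)$ and runs the computation with $g(f(u_1,v_1,0^{(m-2)}),\cdots,f(u_n,v_n,0^{(m-2)})) \subseteq P \backslash \phi(P)$ and the strongly-primary property by hand. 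That manual computation is word-for-word the $r=n$ inductive step of Theorem \ref{7}; you instead take a witness $u_1^n$ of the failure of the $\delta$-$S$-primary property (which exists in particular for the associated $s$), force $g(u_1^n) \in \phi(P)$ via the non-strong primary property exactly as the paper does in its first case, and then cite Theorem \ref{7} at full strength $r=n$, where the conclusion degenerates to $g(P^{(n)}) \subseteq \phi(P)$. What your route buys is brevity and the conceptual point that Theorem \ref{8} \emph{is} Theorem \ref{7} read at top level; what the paper's route buys is independence from the (slightly ambiguous) range of $r$ in Theorem \ref{7}, whose displayed formula and whose every use in the paper suggest at least one $u_j$ survives, i.e. $r \leq n-1$. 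This is the one point you should guard: your citation is only as good as the claim that Theorem \ref{7} covers $r=n$. Fortunately the induction in its proof extends verbatim to $r=n$ (the second family of alternatives, indexed by $j \in \{r+1,\cdots,n\}$, simply becomes vacuous), so the gap is cosmetic; but a careful write-up should either note this or restate Theorem \ref{7} with the range of $r$ made explicit.
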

\begin{proof}
Assume that $P$ is  a strongly  $n$-ary $\phi$-$\delta$-$S$-primary hyperideal of $K$ associated to $s \in S$ and $g(P^{(n)}) \nsubseteq \phi(P)$.  Let $u_1^n \in K$ such that $g(u_1^n) \in P$. If $ g(u_1^n) \notin \phi(P)$, then we get $g(s,u_i,1_K^{(n-2)}) \in P$ or $g(u_1^n,s,u_{i+1}^n) \in \delta(P)$ for some $i \in \{1,\cdots,n\}$ which implies $P$ is an $n$-ary $\delta$-$S$-primary hyperideal of $K$, a contradiction. Suppose that $g(u_1^n) \in \phi(P)$. If $g(u_1,\cdots,\widehat{u_{j_1}}, \cdots,\widehat{u_{j_2}},\cdots, \widehat{u_{j_r}}, \cdots, P^{(r)}) \nsubseteq \phi(P)$. Then we conclude that $g(s,u_i,1_K^{(n-2)}) \in P$ or $g(u_1^n,s,u_{i+1}^n) \in \delta(P)$ for some $i \in \{1,\cdots,n\}$ in view of Theorem \ref{7}, a contradiction. Now, assume that $g(u_1,\cdots,\widehat{u_{j_1}}, \cdots,\widehat{u_{j_2}},\cdots, \widehat{u_{j_r}}, \cdots, P^{(r)}) \subseteq \phi(P)$ for all $r \in \{1,\cdots,n-1\}$.
 Since $ g(P^{(n)}) \nsubseteq \phi(P)$, there exist $v_1^n \in P$ such that $g(v_1^n) \notin \phi(P)$. Therefore we have $ g(f(u_1,v_1,0^{(m-2)}), \cdots,f(u_n,v_n,0^{(m-2})) \subseteq P \backslash \phi(P)$ by Theorem \ref{7}. Hence  $g(s,f(u_i,v_i,0^{(m-2)}),1_K^{(n-2)}) \subseteq P$ and so $f(g(s,u_i,1_K^{(n-2)}),g(s,v_i,1_K^{(n-2)}),0^{(m-2)}) \subseteq P$ 
which implies $g(s,u_i,1_K^{(n-2)}) \in P$ as $g(s,v_i,1_K^{(n-2)}) \in P$ or we conclude that $g(f(u_1,v_1,0^{(m-2)}),\cdots,\widehat{f(u_{i},v_{i},0^{(m-2)})},s,\cdots,f(u_n,v_n, 0^{(m-2)})) \subseteq \delta(P)$ which implies $g(u_1^{i-1},s,u_{i+1}^n) \in \delta(P)$ by Theorem \ref{7}. This contradicts the fact that $P$ is not $\delta$-$S$-primary hyperideal of $K$. Consequenly, $g(P^{(n)}) \subseteq \phi(P)$.
\end{proof} 
A proper hyperideal $P$ of $K$ may not be $n$-ary $\phi$-$\delta$-$S$-primary when it holds $g(P^{(n)}) \subseteq \phi(P)$. The following example verified this claim.
\begin{example} \label{madar}
Consider Krasner $(4,3)$-hyperring $(\mathbb{Z}_{5^{5r}},+,\cdot)$ where $r>4$ and also $+$ and $\cdot$ are usual addition and multiplication. Let $S=\{1\}$.  Then $P=\langle 5^r \rangle$ is not a $5$-ary $\phi_5$-$\delta_0$-$S$-primary  hyperideal of $\mathbb{Z}_{5^{5r}}$ since $5.5.5.5.5^{r-4} \in P \backslash \phi_5(P)$ but $5,5^{r-4} \notin P$.
\end{example}
By using Theorem \ref{8}, we can deduce  the following corollaries. 
\begin{corollary}
Let $P$ be  a strongly  $n$-ary $\phi$-$\delta$-$S$-primary hyperideal of $K$ associated to $s \in S$ but is not an $n$-ary $\delta$-$S$-primary hyperideal. Then $rad(P)=rad(\phi(P))$.
\end{corollary}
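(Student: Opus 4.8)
The plan is to derive the equality directly from Theorem \ref{8}, which applies verbatim under the stated hypotheses and yields $g(P^{(n)}) \subseteq \phi(P)$. Combining this with the defining property $\phi(P) \subseteq P$ of a reduction function produces the chain of inclusions $g(P^{(n)}) \subseteq \phi(P) \subseteq P$. The radical is monotone with respect to inclusion: if $I \subseteq J$, then every $n$-ary prime hyperideal containing $J$ also contains $I$, so that $\{Q \text{ prime} : J \subseteq Q\} \subseteq \{Q \text{ prime} : I \subseteq Q\}$ and hence $rad(I) \subseteq rad(J)$. Applying $rad$ to the chain therefore gives $rad(g(P^{(n)})) \subseteq rad(\phi(P)) \subseteq rad(P)$. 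Consequently it suffices to prove the single reverse inclusion $rad(P) \subseteq rad(g(P^{(n)}))$, since then all three hyperideals are squeezed together and in particular $rad(\phi(P)) = rad(P)$, which is the claim.

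To establish $rad(P) \subseteq rad(g(P^{(n)}))$, I would take $u \in rad(P)$ and invoke the characterization of radical membership from \cite{sorc1} (Theorem 4.23): some power of $u$, written either as $g(u^{(r)},1_K^{(n-r)})$ with $r \le n$ or as $g_{(x)}(u^{(r)})$ with $r = x(n-1)+1$, lies in $P$; denote such an element by $w \in P$. Forming the $n$-ary product $g(w^{(n)})$ of $n$ copies of $w$ gives, on the one hand, a higher power of $u$ obtained by regrouping the copies of $u$ via the associativity of the $n$-ary operation, and, on the other hand, an element of $g(P^{(n)})$ since every factor lies in $P$. Hence this higher power of $u$ belongs to $g(P^{(n)})$, and by the same characterization this means $u \in rad(g(P^{(n)}))$. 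As $u$ was arbitrary, $rad(P) \subseteq rad(g(P^{(n)}))$, completing the argument.

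The monotonicity chain of the first paragraph is routine; the only step demanding care is the power bookkeeping in the second paragraph, where one must check that $g(w^{(n)})$ is genuinely again a power of $u$ of one of the two shapes covered by the radical characterization. This amounts to tracking how the forms $g(u^{(r)},1_K^{(n-r)})$ and $g_{(x)}(u^{(r)})$ combine under $g$, using the $n$-ary semigroup axiom and the scalar identity axiom to absorb the filler identities $1_K$ and to re-express the iterated operation $g_{(x)}$. I expect this to be the main obstacle, though it is purely computational and is exactly the kind of verification already carried out in the proof of Theorem \ref{1}.
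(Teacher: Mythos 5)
Your proposal is correct and follows essentially the same route as the paper: both apply Theorem \ref{8} to obtain the chain $g(P^{(n)}) \subseteq \phi(P) \subseteq P$ and then conclude by squeezing radicals, the only difference being that the paper leaves the reverse inclusion $rad(P) \subseteq rad(g(P^{(n)}))$ entirely implicit while you justify it explicitly via the power characterization of the radical. Your bookkeeping concern (rewriting $g(w^{(n)})$ as an identity-padded power of $u$ using commutativity, associativity and absorption of $1_K$'s) is genuinely the only delicate point, and it is exactly the kind of manipulation the paper itself performs freely, e.g.\ in the proofs of Theorem \ref{1} and Corollary \ref{9}.
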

\begin{proof}
Assume that $P$ is  a strongly  $n$-ary $\phi$-$\delta$-$S$-primary hyperideal of $K$ associated to $s \in S$ but is not an $n$-ary $\delta$-$S$-primary hyperideal. Then we get the result that $g(P^{(n)}) \subseteq \phi(P) \subseteq P$ by Theorem \ref{8}. Thus we conclude that $rad(P)=rad(\phi(P))$. 
\end{proof}
\begin{corollary}\label{9}
If $P$ is  a strongly  $n$-ary $\phi$-$\delta$-$S$-primary hyperideal of $K$ associated to $s \in S$ and $1_K \in S$, then $P \subseteq rad(\phi(P))$ or $g(s,rad(\phi(P)),1_K^{(n-2)}) \subseteq \delta(P)$.
\end{corollary}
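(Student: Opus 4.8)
The plan is to derive Corollary \ref{9} from the dichotomy packaged in Theorem \ref{8}. Since $P$ is a strongly $n$-ary $\phi$-$\delta$-$S$-primary hyperideal, one of two situations occurs: either $P$ is itself an $n$-ary $\delta$-$S$-primary hyperideal, or $P$ fails to be $\delta$-$S$-primary and then Theorem \ref{8} forces $g(P^{(n)}) \subseteq \phi(P)$. I would treat these two situations separately, matching the second one to the alternative $P \subseteq rad(\phi(P))$ and the first to the alternative $g(s,rad(\phi(P)),1_K^{(n-2)}) \subseteq \delta(P)$.

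First I would dispose of the case $g(P^{(n)}) \subseteq \phi(P)$. For any $u \in P$ we have $g(u^{(n)}) \in g(P^{(n)}) \subseteq \phi(P)$, and taking $r=n$ in the radical characterisation recalled after Theorem 4.23 of \cite{sorc1} (namely $g(u^{(r)},1_K^{(n-r)}) \in \phi(P)$ with $r \leq n$) shows $u \in rad(\phi(P))$. Hence $P \subseteq rad(\phi(P))$, which is the first alternative of the statement. This step is short and uses only the radical characterisation together with the conclusion of Theorem \ref{8}.

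The substantial case is when $P$ is an $n$-ary $\delta$-$S$-primary hyperideal associated to $s \in S$; here I must prove $g(s,w,1_K^{(n-2)}) \in \delta(P)$ for every $w \in rad(\phi(P))$. Fixing such a $w$, the radical characterisation supplies $r \in \mathbb{N}$ with $g(w^{(r)},1_K^{(n-r)}) \in \phi(P) \subseteq P$ (or the corresponding iterated expression $g_{(x)}(w^{(r)}) \in P$). I would then feed this $n$-ary product into the defining property of a $\delta$-$S$-primary hyperideal and argue by induction on the exponent $r$, working inside the commutative monoid that $(K,g)$ induces via the binary multiplication $a \cdot b = g(a,b,1_K^{(n-2)})$, so that $w^{(r)}$ is literally an $r$-th power. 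The base case $r=1$ is immediate, since $w \in \phi(P) \subseteq P$ gives $g(s,w,1_K^{(n-2)}) \in P \subseteq \delta(P)$ because $P$ is a hyperideal; and the outcome in which a pure power of $s$ would be forced into $P$ is discarded using $S \cap P = \varnothing$, where the hypothesis $1_K \in S$ guarantees that $s$ may genuinely serve as the associated element throughout.

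The main obstacle is precisely the inductive reduction in this last case. Applying the $\delta$-$S$-primary property to $g(w^{(r)},1_K^{(n-r)}) \in P$ yields, for some index $i$ that the definition does not let me choose, either the desired $g(s,w,1_K^{(n-2)}) \in P$, or a membership of the form $g(s,w^{(r-1)},1_K^{(n-r)}) \in \delta(P)$ or $g(s,w^{(r)},1_K^{(n-r-1)}) \in \delta(P)$, in which the conclusion already lands in $\delta(P)$ rather than in $P$, where the property can no longer be reapplied. The delicate point is therefore to organise the induction (for instance by taking $r$ minimal with $g(w^{(r)},1_K^{(n-r)}) \in \phi(P)$, by re-feeding the newly produced products back into the definition, and by using $S \cap P = \varnothing$ to eliminate the degenerate indices) so that the exponent genuinely decreases while the relevant membership stays usable, ultimately collapsing every outcome to $g(s,w,1_K^{(n-2)}) \in \delta(P)$. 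I expect the careful control of which index the definition returns, together with keeping the intermediate products inside $P$, to be where essentially all of the work lies.
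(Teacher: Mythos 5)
Your use of the dichotomy from Theorem \ref{8} and your handling of the case $g(P^{(n)}) \subseteq \phi(P)$ (giving $P \subseteq rad(\phi(P))$ via the radical characterisation) coincide exactly with the paper's proof, which states the same step contrapositively: $P \nsubseteq rad(\phi(P))$ forces $g(P^{(n)}) \nsubseteq \phi(P)$, hence $P$ is $n$-ary $\delta$-$S$-primary. The gap is in the remaining case, and it is genuine: the direct induction you outline cannot be organised so as to close. You want to prove $g(s,w,1_K^{(n-2)}) \in \delta(P)$ by feeding $g(w^{(r)},1_K^{(n-r)}) \in P$ into the $\delta$-$S$-primary property and decreasing $r$; but, as you yourself observe, every outcome of one application other than the desired one is a membership in $\delta(P)$, and $\delta(P)$ is merely some hyperideal containing $P$ --- it carries no primary-type property, so neither minimality of $r$, nor re-feeding, nor $S \cap P = \varnothing$ can convert $g(s,w^{(r-1)},1_K^{(n-r)}) \in \delta(P)$ or $g(s,w^{(r)},1_K^{(n-r-1)}) \in \delta(P)$ into the desired $g(s,w,1_K^{(n-2)}) \in \delta(P)$. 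The induction therefore stalls after its first step, and your proposal leaves precisely this --- by your own admission ``essentially all of the work'' --- undone.

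The missing idea, which is how the paper proceeds, is to run this case by contradiction rather than directly. Fix $u \in rad(\phi(P))$ with $g(s,u,1_K^{(n-2)}) \notin \delta(P)$, with $r$ minimal such that $g(u^{(r)},1_K^{(n-r)}) \in \phi(P) \subseteq P$. The standing hypothesis now does the work your induction cannot: at each application of the $\delta$-$S$-primary property to the factorisation $g\bigl(u, g(s^{(k)},u^{(r-k-1)},1_K^{(n-r+1)}),1_K^{(n-2)}\bigr) \in P$, the disjuncts that would place $g(s,u,1_K^{(n-2)})$ in $P$ or in $\delta(P)$ are excluded outright, and the surviving conclusion is a membership in $P$, namely $g(s^{(k+1)},u^{(r-k-1)},1_K^{(n-r)}) \in P$, which can be re-fed into the definition. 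After $r$ steps one reaches $g(s^{(r)},1_K^{(n-r)}) \in P$, and since $1_K \in S$ and $S$ is multiplicatively closed this element lies in $S$, contradicting $P \cap S = \varnothing$. (Your instinct that this step is delicate is sound even against the paper: under the ``for some $i$'' reading of the definition, an application could also return $g(s,u^{(r-1)},1_K^{(n-r)}) \in \delta(P)$ or $g(s,u^{(r)},1_K^{(n-r-1)}) \in \delta(P)$, outcomes the paper silently discards and which are harmless only if the disjunction is read as holding for every index $i$. But the decisive manoeuvre absent from your proposal is the pivot to contradiction, which keeps every intermediate membership inside $P$ and terminates in the absurdity $P \cap S \neq \varnothing$.)
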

\begin{proof}
Assume that $P \nsubseteq rad(\phi(P))$. This means that $g(P^n) \nsubseteq \phi(P)$. Since $P$ is  a strongly  $n$-ary $\phi$-$\delta$-$S$-primary hyperideal of $K$ associated to $s \in S$, we conclude that $P$ is an $n$-ary $\delta$-$S$-primary hyperideal of $K$ by Theorem \ref{8}. Now, let $g(s,rad(\phi(P)),1_K^{(n-2)}) \nsubseteq \delta(P)$. Then we have $g(s,u,1_K^{(n-2)}) \notin \delta(P)$ for some $u \in rad(\phi(P))$. Then 
there exists $r \in \mathbb {N}$ such that $g(u^ {(r)} , 1_K^{(n-r)} ) \in \phi(P) \subseteq P$ for $r \leq n$, or $g_{(x)} (u^ {(r)} ) \in \phi(P) \subseteq P$ for $r = x(n-1) + 1$. Assume that $r$ is a minimal integer satisfying the possibilities. In the first case, since $P$ is an $n$-ary $\delta$-$S$-primary hyperideal of $K$ associated to $s \in S$,  $g(u,g(u^{(r-1)},1_K^{(n-r+1)}),1_K^{(n-2)})=g(u^ {(r)} , 1_K^{(n-r)} ) \in  P$ and $g(s,u,1_K^{(n-2)}) \notin \delta(P)$, we get the result that $g(s,g(u^{(r-1)},1_K^{(n-r+1)}),1_K^{(n-2)}) \in P$. Again, since $g(u,g(s,u^{(r-2)},1_K^{(n-r+1)}),1_K^{(n-2)}) \in P$ and $g(s,u,1_K^{(n-2)}) \notin \delta(P)$, we obtain $g(s,g(s,u^{(r-2)},1_K^{(n-r+1)}),1_K^{(n-2)}) \in P$. By continuing the process, we get the result that $g(s^{(r-1)},u,1_K^{(n-r)})  \in P$. It follows that $g(s^{(r)},1_K^{(n-r)}) \in P$ as $g(s,u,1_K^{(n-2)}) \notin \delta(P)$. This contradicts the fact that $P \cap S=\varnothing$.  In the second case, by using a similar argument, we get a contradiction. Thus $g(s,rad(\phi(P)),1_K^{(n-2)}) \subseteq \delta(P)$.
\end{proof}
\begin{theorem}\label{10}
Assume that $P$ is a proper hyperideal of $K$. Then $P$ is a strongly $n$-ary $\phi$-$\delta$-$S$-primary hyperideal of $K$ associated  to $s \in S$ if and only if   $(P : u) = (\phi(P) : u)$ or $(P : u) \subseteq (P : s)$ for every $u \in K \backslash (\delta(P) : s )$.
\end{theorem}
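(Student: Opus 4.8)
The plan is to prove both implications by transferring the ideal-theoretic ``strongly'' condition into a statement about the single element $u$ and the colon hyperideal $(P:u)$, in the spirit of what was done for the weaker notion in Theorem~\ref{5}, and then to resolve the resulting dichotomy with the fact that a canonical hypergroup is not the union of two proper sub-hypergroups.

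For the forward implication I would fix $u \in K \setminus (\delta(P):s)$ and take an arbitrary $a \in (P:u)$, so $g(a,u,1_K^{(n-2)}) \in P$. If this element lies in $\phi(P)$ then $a \in (\phi(P):u)$; otherwise $g(a,u,1_K^{(n-2)}) \in P \setminus \phi(P)$, and since a strongly $\phi$-$\delta$-$S$-primary hyperideal is in particular $\phi$-$\delta$-$S$-primary, applying the defining condition to this product with its two active positions occupied by $a$ and $u$ yields $g(s,a,1_K^{(n-2)}) \in P$ or $g(s,u,1_K^{(n-2)}) \in \delta(P)$. The second alternative is excluded by $u \notin (\delta(P):s)$, so $a \in (P:s)$. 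This shows $(P:u) \subseteq (\phi(P):u) \cup (P:s)$, that is $(P:u) = (\phi(P):u) \cup \big((P:u)\cap(P:s)\big)$ as a union of two sub-hyperideals. I would then invoke the standard reversibility argument on $f(x,y,0^{(m-2)})$ (for $x,y$ witnessing the two pieces) to conclude that a sub-hypergroup equal to a union of two sub-hypergroups coincides with one of them, giving exactly $(P:u)=(\phi(P):u)$ or $(P:u)\subseteq(P:s)$.

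For the converse, suppose the colon condition holds and $g(P_1^n) \subseteq P\setminus\phi(P)$ for hyperideals $P_1^n$. Since $g(P_1^n)\nsubseteq\phi(P)$, I would choose a witness $a_j\in P_j$ with $w:=g(a_1^n)\in P\setminus\phi(P)$ and, for each $i$, set $u_i:=g(a_1,\dots,\widehat{a_i},\dots,a_n,1_K)$, the product of the remaining entries. Then $g(a_i,u_i,1_K^{(n-2)})=w$, so $a_i\in(P:u_i)\setminus(\phi(P):u_i)$ and hence $(P:u_i)\neq(\phi(P):u_i)$; moreover $P_i\subseteq(P:u_i)$ because $g(P_i,u_i,1_K^{(n-2)})\subseteq g(P_1^n)\subseteq P$. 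If some $u_i\notin(\delta(P):s)$, the colon hypothesis forces $(P:u_i)\subseteq(P:s)$, whence $g(P_i,s,1_K^{(n-2)})\subseteq P$, which is the first alternative of the strongly condition at position $i$.

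The hard part will be the complementary case, in which every witness tuple has all of its $u_i$ inside $(\delta(P):s)$, i.e. $g(s,a_1,\dots,\widehat{a_i},\dots,a_n)\in\delta(P)$ for every witness. Here one has only pointwise information, whereas the target $g(P_1^{i-1},s,P_{i+1}^n)\subseteq\delta(P)$ is an ideal-level inclusion, and promoting the former to the latter is the real obstacle. My plan is to fix one witness and, given arbitrary $b_j\in P_j$, to manufacture a new witness whose complementary product is $g(s,b_2,\dots,b_n)$ by feeding the $b_j$ in through the hypersums $f(b_j,a_j,0^{(m-2)})$ and expanding by the distributive law, keeping at least one resulting summand outside $\phi(P)$ — the same hyperaddition-and-distributivity device used in the proofs of Theorem~\ref{7} and Theorem~\ref{8}. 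Once every $g(s,b_2,\dots,b_n)$ is caught in $\delta(P)$ this way, the second alternative $g(s,P_2,\dots,P_n)\subseteq\delta(P)$ holds and the strongly condition follows. I expect the bookkeeping of which hyperaddition terms survive outside $\phi(P)$ to be the only genuinely delicate point; everything else reduces to regrouping $g$ by associativity and commutativity together with the absorption property of the hyperideals $P$, $\phi(P)$ and $\delta(P)$.
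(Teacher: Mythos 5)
Your forward implication is correct and is essentially the paper's own argument: showing that every $a\in(P:u)$ lies in $(\phi(P):u)$ or in $(P:s)$, and then splitting the union, is exactly what the paper does, except that the paper inlines the union-splitting as a two-term hypersum $f(a,b,0^{(m-2)})$ plus reversibility rather than quoting a ``union of two sub-hyperideals'' lemma. Your Case 1 of the converse is also sound: if some witness $(a_1,\ldots,a_n)$ with $g(a_1^n)\in P\setminus\phi(P)$ has $u_i=g(a_1,\ldots,\widehat{a_i},\ldots,a_n,1_K)\notin(\delta(P):s)$, then $a_i\in(P:u_i)\setminus(\phi(P):u_i)$ rules out the first branch of the colon dichotomy, and $P_i\subseteq(P:u_i)\subseteq(P:s)$ gives the first alternative of the strongly condition.

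The genuine gap is your Case 2, and the device you propose for it does not work as stated. If you expand $g(a_1,f(b_2,a_2,0^{(m-2)}),\ldots,f(b_n,a_n,0^{(m-2)}))$ by distributivity you obtain a hypersum of $2^{n-1}$ products, of which only $g(a_1^n)$ is known to avoid $\phi(P)$; the others have unknown status, and a hypersum with two or more summands outside $\phi(P)$ can perfectly well meet $\phi(P)$ (in an ordinary ring, $x$ and $-x$ may both avoid $\phi(P)$ while $x+(-x)=0\in\phi(P)$). Reversibility only forces elements of a hypersum out of $\phi(P)$ when all but one summand is known to lie in $\phi(P)$, so ``keeping at least one resulting summand outside $\phi(P)$'' does not manufacture a witness. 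Moreover, even when a mixed tuple $(a_1,c_2,\ldots,c_n)$ with $c_j\in f(a_j,b_j,0^{(m-2)})$ is a witness, Case 2 only yields $g(s,c_2,\ldots,c_n)\in\delta(P)$, not the desired $g(s,b_2,\ldots,b_n)\in\delta(P)$: you would still need a one-coordinate-at-a-time induction, with a case split at each coordinate on whether the singly-replaced product falls into $\phi(P)$, followed by a downward unwinding by reversibility to trade each $c_j$ back for $b_j$. That is the actual content of the case, not bookkeeping. The paper sidesteps all of this by negating the whole conclusion at once: assuming $g(s,P_i,1_K^{(n-2)})\nsubseteq P$ and $g(P_1^{i-1},s,P_{i+1}^n)\nsubseteq\delta(P)$ for every $i$, it picks a tuple $t=g(u_1^{i-1},1_K,u_{i+1}^n)\notin(\delta(P):s)$ --- which need not come from any witness --- and, since $P_i\subseteq(P:t)$ while $P_i\nsubseteq(P:s)$, it is the \emph{second} branch of the dichotomy that dies, forcing $(P:t)=(\phi(P):t)$ and hence $g(u_1^{i-1},P_i,u_{i+1}^n)\subseteq\phi(P)$; a single two-term hypersum $f(t,u,0^{(m-2)})$ then transfers this conclusion to the tuples $u\in(\delta(P):s)$, giving $g(P_1^n)\subseteq\phi(P)$, a contradiction. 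The move you are missing is that the colon hypothesis should be applied to arbitrary elements outside $(\delta(P):s)$, with the negation of the first alternative (rather than witness data) used to resolve the dichotomy.
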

\begin{proof}
$\Longrightarrow$  Let $P$ be  a strongly $n$-ary $\phi$-$\delta$-$S$-primary hyperideal of $K$ associated  to $s \in S$. Take any $u \in K \backslash (\delta(P) : s )$. So $g(s,u,1_K^{(n-2)})\notin \delta(P)$. Assume that $(P : u) \neq  (\phi(P) : u)$ and $ a \in (P : u)$. Let $a \notin  (\phi(P) : u)$. Then we conclude that  $g(s,a, 1_K^{(n-2)}) \in P$ as $g(u,a,1_K^{(n-2)}) \in P \backslash \phi(P)$ and $g(s,u,1_K^{(n-2)})\notin \delta(P)$. Therefore $a \in (P : s)$, as needed. Now, let $a \in  (\phi(P) : u)$. Take any $b \in (P : u) \backslash(\phi(P) : u)$. Then we have $g(b,u,1_K^{(n-2)}) \in P \backslash \phi(P)$. By the hypothesis, we obtain $g(s,b,1_K^{(n-2)}) \in P$ as $g(s,u,1_K^{(n-2)})\notin \delta(P)$. Since $g(b,u,1_K^{(n-2)}) \in P \backslash \phi(P)$, we get $g(u,f(a,b,0^{(m-2)}),1_K^{(n-2)})=f(g(u,a,1_K^{(n-2)}),g(u,b,1_K^{(n-2)}),0^{(m-2)}) \subseteq P \backslash \phi(P)$. Since $P$ is a strongly $n$-ary $\phi$-$\delta$-$S$-primary hyperideal of $K$ associated  to $s \in S$ and $g(s,u,1_K^{(n-2)})\notin \delta(P)$, we obtain $f(g(s,a,1_K^{(n-2)}),g(s,b,1_K^{(n-2)}),0^{(m-2)})=g(s,f(a,b,0^{(m-2)}),1_K^{(n-2)}) \subseteq P$. Then we get the result that  $g(s,a,1_K^{(n-2)}) \in P$ as $g(s,b,1_K^{(n-2)}) \in P$. It follows that $a \in (P : s)$ and so $(P : u) \subseteq (P : s)$.

$\Longleftarrow$ Assume that $g(P_1^n) \subseteq P \backslash \phi(P)$ for some hyperideals $P_1^n$ of $P$ but neither $g(s,P_i,1_K^{(n-2)}) \subseteq P$ nor $g(P_1^{i-1},s,P_{i+1}^n) \subseteq \delta(P)$ for all $i \in \{1,\cdots,n\}$. Since $g(P_1^{i-1},s,P_{i+1}^n) \nsubseteq \delta(P)$, there exists $u_j \in P_j$ for each $j \in \{1,\cdots,\widehat{i},\cdots,n\}$ such that $g(g(u_1^{i-1},1_K,u_{i+1}^n),s,1_K^{(n-2)})=g(u_1^{i-1},s,u_{i+1}^n) \notin \delta(P)$ and so $g(u_1^{i-1},1_K,u_{i+1}^n) \in g(P_1^{i-1},1_K,P_{i+1}^n) \backslash  (\delta(P) : s)$. By the hypothesis, we have $(P : g(u_1^{i-1},1_K,u_{i+1}^n)) = (\phi(P) : g(u_1^{i-1},1_K,u_{i+1}^n))$ or $(P : g(u_1^{i-1},1_K,u_{i+1}^n)) \subseteq (P : s)$. Since $P_i  \subseteq (P : g(u_1^{i-1},1_K,u_{i+1}^n)) \backslash (P : s)$, we conclude that $(P : g(u_1^{i-1},1_K,u_{i+1}^n)) = (\phi(P) : g(u_1^{i-1},1_K,u_{i+1}^n))$ which implies $g(u_1^{i-1},P_i,u_{i+1}^n) \subseteq \phi(P)$. Now, take any  $u \in g(P_1^{i-1},1_K,P_{i+1}^n) \cap (\delta(P) : s)$. Since  $f(g(u_1^{i-1},1_K,u_{i+1}^n),u,0^{(m-2)})$ is a subset of  $g(P_1^{i-1},1_K,P_{i+1}^n) \backslash (\delta(P) : s)$, we get $P_i \subseteq (P : f(g(u_1^{i-1},1_K,u_{i+1}^n),u,0^{(m-2)}))= (\phi(P) : f(g(u_1^{i-1},1_K,u_{i+1}^n),u,0^{(m-2)}))$. Therefore we get  $g(P_i,u,1_K^{(n-2)}) \subseteq \phi(P)$ since 
$f(g(u_1^{i-1},P_i,u_{i+1}^n),g(P_i,u,1_K^{(n-2)}),0^{(m-2)}) \subseteq \phi(P)$ and $g(u_1^{i-1},P_i,u_{i+1}^n) \subseteq \phi(P)$. This implies that $g(P_1^n) \subseteq \phi(P)$ which is impossible.  Thus $P$ is a strongly $n$-ary $\phi$-$\delta$-$S$-primary hyperideal of $K$.
\end{proof}
\begin{theorem}\label{11}
Let  $P$ be a proper hyperideal of $K$. Then $P$ is a strongly $n$-ary $\phi$-$\delta$-$S$-primary hyperideal of $K$ associated  to $s \in S$ if and only if   $(P : u) = (\phi(P) : u)$ or $(P : u) \subseteq (\delta(P) : s)$ for every $u \in K \backslash (P : s )$.
\end{theorem}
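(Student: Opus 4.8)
The plan is to mirror the proof of Theorem \ref{10} almost line for line, interchanging the two colon hyperideals $(P:s)$ and $(\delta(P):s)$ and, accordingly, interchanging which of the two forced conclusions ($``\in P"$ versus $``\in \delta(P)"$) is read off at each step. Throughout I use that $\phi(P)$, $P$, $\delta(P)$ are subhypergroups of the canonical $m$-ary hypergroup $(K,f)$ and that $\phi(P)\subseteq P\subseteq \delta(P)$, together with the distributivity axiom and the reversibility of $(K,f)$.

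For the forward implication I would fix $u\in K\setminus(P:s)$, so that $g(s,u,1_K^{(n-2)})\notin P$, assume $(P:u)\neq(\phi(P):u)$, and prove $(P:u)\subseteq(\delta(P):s)$. Take $a\in(P:u)$. If $a\notin(\phi(P):u)$ then $g(a,u,1_K^{(n-2)})\in P\setminus\phi(P)$; applying the $\phi$-$\delta$-$S$-primary property to this factorization and discarding the branch $g(s,u,1_K^{(n-2)})\in P$ (which fails by the choice of $u$) forces $g(s,a,1_K^{(n-2)})\in\delta(P)$, i.e.\ $a\in(\delta(P):s)$. If instead $a\in(\phi(P):u)$, I pick $b\in(P:u)\setminus(\phi(P):u)$, obtain $g(s,b,1_K^{(n-2)})\in\delta(P)$ from the case just handled, and use the identity $g(u,f(a,b,0^{(m-2)}),1_K^{(n-2)})=f(g(u,a,1_K^{(n-2)}),g(u,b,1_K^{(n-2)}),0^{(m-2)})$. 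Reversibility shows this set lies in $P\setminus\phi(P)$ (each element is in $P$, and none in $\phi(P)$ since $g(u,b,1_K^{(n-2)})\notin\phi(P)$ while $g(u,a,1_K^{(n-2)})\in\phi(P)$). Applying the defining property elementwise gives $f(g(s,a,1_K^{(n-2)}),g(s,b,1_K^{(n-2)}),0^{(m-2)})=g(s,f(a,b,0^{(m-2)}),1_K^{(n-2)})\subseteq\delta(P)$, and since $g(s,b,1_K^{(n-2)})\in\delta(P)$ and $\delta(P)$ is a subhypergroup, I conclude $g(s,a,1_K^{(n-2)})\in\delta(P)$.

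For the converse I argue by contradiction: suppose $g(P_1^n)\subseteq P\setminus\phi(P)$ for hyperideals $P_1^n$ while for every $i$ neither $g(P_i,s,1_K^{(n-2)})\subseteq P$ nor $g(P_1^{i-1},s,P_{i+1}^n)\subseteq\delta(P)$. Fixing $i$, from $g(P_1^{i-1},s,P_{i+1}^n)\nsubseteq\delta(P)$ I select $u_j^0\in P_j$ ($j\neq i$) giving an element $v_0$ (their product with $1_K$ in slot $i$) with $v_0\notin(\delta(P):s)$, and from $g(P_i,s,1_K^{(n-2)})\nsubseteq P$ an element $u_i^0\in P_i\setminus(P:s)$. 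The goal is to force $g(P_1^n)\subseteq\phi(P)$, contradicting $g(P_1^n)\nsubseteq\phi(P)$. For any $u_i\in P_i\setminus(P:s)$ the hypothesis yields $(P:u_i)=(\phi(P):u_i)$ or $(P:u_i)\subseteq(\delta(P):s)$; since $v_0\in(P:u_i)\setminus(\delta(P):s)$ the second is impossible, so $(P:u_i)=(\phi(P):u_i)$, whence $g(u_1^{i-1},u_i,u_{i+1}^n)\in\phi(P)$ for all remaining factors. To cover $u_i\in P_i\cap(P:s)$ I combine it with $u_i^0$ through $f(u_i,u_i^0,0^{(m-2)})\subseteq P_i$: because $g(s,u_i,1_K^{(n-2)})\in P$ but $g(s,u_i^0,1_K^{(n-2)})\notin P$, reversibility forces every element of $f(u_i,u_i^0,0^{(m-2)})$ outside $(P:s)$, so the previous step applies to each, giving $g(u_1^{i-1},f(u_i,u_i^0,0^{(m-2)}),u_{i+1}^n)\subseteq\phi(P)$; cancelling the term $g(u_1^{i-1},u_i^0,u_{i+1}^n)\in\phi(P)$ leaves $g(u_1^{i-1},u_i,u_{i+1}^n)\in\phi(P)$. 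Hence $g(P_1^n)\subseteq\phi(P)$, the required contradiction.

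The colon-ideal manipulations are routine; the hard part will be the two places where a hyperaddition $f(\cdot,\cdot,0^{(m-2)})$ is introduced and I must certify that the whole resulting set lands in $P\setminus\phi(P)$ (forward direction) or entirely outside $(P:s)$ (converse). Both hinge on reversibility of the canonical hypergroup and on pushing the elementwise $\phi$-$\delta$-$S$-primary property through these set-valued expressions. The chief bookkeeping hazard is that, relative to Theorem \ref{10}, the roles of $(P:s)$ and $(\delta(P):s)$ and the forced membership ($\in P$ versus $\in\delta(P)$) are swapped, so I must be careful to discard the correct branch at each application of the defining property.
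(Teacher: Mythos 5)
Your proposal is exactly what the paper itself does: the paper's proof of this theorem is the one-line remark that it ``can be easily obtained in a similar manner to Theorem \ref{10}'', and you have carried out precisely that mirroring --- in the forward direction discarding the branch $g(s,u,1_K^{(n-2)})\in P$ (rather than $\in\delta(P)$) and re-using the $f(a,b,0^{(m-2)})$/reversibility trick to reach $\delta(P)$ instead of $P$, and in the converse applying the colon hypothesis to elements of $P_i\setminus(P:s)$ exactly as the paper applies it to complementary products lying outside $(\delta(P):s)$. The proposal is correct at the same level of rigor as the paper's Theorem \ref{10} argument and takes essentially the same approach.
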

\begin{proof}
The proof can be easily obtained in a similar manner to Theorem \ref{10}.
\end{proof}
\begin{theorem}\label{12}
Assume that  $P$ is a strongly $n$-ary $\phi$-$\delta$-$S$-primary hyperideal of $K$ associated  to $s \in S$ and $Q$ is a hyperideal of $K$ with $Q \nsubseteq P$. If $(\delta(P) : Q) \subseteq \delta((P : Q))$ and $(\phi(P) : Q) \subseteq \phi((P : Q))$, then $(P : Q)$ is an $n$-ary $\phi$-$\delta$-$S$-primary hyperideal of $K$.
\end{theorem}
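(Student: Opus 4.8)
The plan is to exploit that $P$ is \emph{strongly} $\phi$-$\delta$-$S$-primary rather than merely $\phi$-$\delta$-$S$-primary, since this is exactly what lets a single index $i$ work simultaneously for every element of $Q$. Recall that $(P:Q)=\{a\in K \mid g(a,q,1_K^{(n-2)})\in P \text{ for all } q\in Q\}=\bigcap_{q\in Q}(P:q)$, that $P\subseteq (P:Q)$ by absorption, and that $Q\nsubseteq P$ forces $1_K\notin (P:Q)$, so $(P:Q)$ is a proper hyperideal; one also checks $(P:Q)\cap S=\varnothing$. I would then fix $u_1^n\in K$ with $g(u_1^n)\in (P:Q)\backslash\phi((P:Q))$, the aim being to produce an index $i$ with $g(u_i,s,1_K^{(n-2)})\in (P:Q)$ or $g(u_1^{i-1},s,u_{i+1}^n)\in\delta((P:Q))$.

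First I would turn the hypothesis into a non-$\phi$ statement for $P$ itself. Since $(\phi(P):Q)\subseteq\phi((P:Q))$ and $g(u_1^n)\notin\phi((P:Q))$, we get $g(u_1^n)\notin(\phi(P):Q)$, so there is $q\in Q$ with $g(g(u_1^n),q,1_K^{(n-2)})\notin\phi(P)$; as $g(u_1^n)\in(P:Q)$ this same product lies in $P$. The decisive move is to package this at the level of hyperideals: set $A_i=\langle u_i\rangle$ for $i\le n-1$ and let $A_n=g(\langle u_n\rangle,Q,1_K^{(n-2)})$. Using associativity and commutativity of $g$ together with $g(u_1^n)\in(P:Q)$ and absorption, one checks $g(A_1^n)\subseteq P$, while the generator $g(u_1,\ldots,u_{n-1},g(u_n,q,1_K^{(n-2)}))=g(g(u_1^n),q,1_K^{(n-2)})$ shows $g(A_1^n)\nsubseteq\phi(P)$. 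Hence the strongly $\phi$-$\delta$-$S$-primary property of $P$ applies and yields a single index $i$ with $g(A_i,s,1_K^{(n-2)})\subseteq P$ or $g(A_1^{i-1},s,A_{i+1}^n)\subseteq\delta(P)$.

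It then remains to read off the conclusion for $(P:Q)$ in each case, using the three containments. If $i\le n-1$: in the first alternative $g(u_i,s,1_K^{(n-2)})\in P\subseteq(P:Q)$; in the second, evaluating $g(A_1^{i-1},s,A_{i+1}^n)\subseteq\delta(P)$ on generators gives $g(g(u_1^{i-1},s,u_{i+1}^n),q,1_K^{(n-2)})\in\delta(P)$ for \emph{all} $q\in Q$, so $g(u_1^{i-1},s,u_{i+1}^n)\in(\delta(P):Q)\subseteq\delta((P:Q))$. If $i=n$: the first alternative gives $g(g(u_n,s,1_K^{(n-2)}),q,1_K^{(n-2)})\in P$ for all $q$, i.e. $g(u_n,s,1_K^{(n-2)})\in(P:Q)$; the second gives $g(u_1^{n-1},s)\in\delta(P)\subseteq\delta((P:Q))$, the last containment coming from monotonicity of $\delta$ applied to $P\subseteq(P:Q)$. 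In every case the index $i$ produced by the strongly property serves verbatim as the index for $(P:Q)$, so $(P:Q)$ is an $n$-ary $\phi$-$\delta$-$S$-primary hyperideal of $K$ associated to $s$.

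The heart of the argument, and the step I expect to be the real obstacle, is precisely the passage from a single chosen $q$ to all $q\in Q$ in the $\delta$-alternative: an elementwise application of the ordinary $\phi$-$\delta$-$S$-primary property would only give $g(u_1^{i-1},s,u_{i+1}^n)\in(\delta(P):q)$ for the one $q$ selected above, with the index $i$ possibly depending on $q$, which does not suffice to land in $(\delta(P):Q)$. Replacing the element $q$ by the whole hyperideal $Q$, absorbed into the last slot $A_n$, and invoking the \emph{strongly} hypothesis is what forces one uniform index together with the full containment $g(A_1^{i-1},s,A_{i+1}^n)\subseteq\delta(P)$; the two hypotheses $(\phi(P):Q)\subseteq\phi((P:Q))$ and $(\delta(P):Q)\subseteq\delta((P:Q))$ then act as the bridges converting the $\phi(P)$- and $\delta(P)$-statements into the required $\phi((P:Q))$- and $\delta((P:Q))$-statements.
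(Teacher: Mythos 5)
Your argument is correct and is essentially the paper's own proof: both promote the elements $u_j$ to principal hyperideals, absorb $Q$ into the product so that the \emph{strongly} hypothesis can be applied at the hyperideal level and yield one uniform index (the paper groups the product as $g(\langle u_i\rangle, g(\langle u_1\rangle,\cdots,\langle u_{i-1}\rangle,Q,\langle u_{i+1}\rangle,\cdots,\langle u_n\rangle),1_K^{(n-2)})$ instead of your $A_n=g(\langle u_n\rangle,Q,1_K^{(n-2)})$, a purely cosmetic difference), and then use $(\phi(P):Q)\subseteq\phi((P:Q))$ and $(\delta(P):Q)\subseteq\delta((P:Q))$ to translate the two alternatives back to $(P:Q)$; you are if anything more precise than the paper, which asserts $g(g(u_1^n),Q,1_K^{(n-2)})\subseteq P\backslash\phi(P)$ where only ``$\subseteq P$ and $\nsubseteq\phi(P)$'' is actually available. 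The lone unsupported point is your aside that $(P:Q)\cap S=\varnothing$ can be checked: it is in fact not derivable from the hypotheses (in $K=\mathbb{Z}$ with $P=6\mathbb{Z}$, $Q=3\mathbb{Z}$, $S=\{2^k:\ k\geq 1\}$, $s=2$, $\phi=\phi_0$, $\delta=\delta_1$, every hypothesis holds yet $(P:Q)=2\mathbb{Z}$ meets $S$), but the paper's proof is silent on this requirement as well, so this is a defect of the theorem's formulation shared by both arguments rather than a gap specific to yours.
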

\begin{proof}
Let $g(u_1^n) \in (P : Q) \backslash \phi((P : Q))$ for $u_1^n \in K$. Since $(\phi(P) : Q) \subseteq \phi((P : Q))$, we get $g(u_i,g(u_1^{i-1},Q,u_{i+1}^n),1_K^{(n-2)})=g(g(u_1^n),Q,1_K^{(n-2)}) \subseteq P \backslash \phi(P)$. Therefore  $g(s,\langle u_i \rangle,1_K^{(n-2)}) \subseteq P$ or  $g(g(\langle u_1 \rangle, \cdots, \langle u_{i-1} \rangle,s, \langle u_{i+1} \rangle, \cdots, \langle u_n \rangle),Q,1_K^{(n-2)})=g(s,g(\langle u_1 \rangle, \cdots, \langle u_{i-1}\rangle,Q, \langle u_{i+1} \rangle, \cdots, \langle u_n \rangle),1_K^{(n-2)}) \subseteq \delta(P)$ as $P$ is  strongly $n$-ary $\phi$-$\delta$-$S$-primary  and  $g(\langle u_i \rangle, g(\langle u_1 \rangle, \cdots, \langle u_{i-1}\rangle,Q, \langle u_{i+1} \rangle, \cdots, \langle u_n \rangle),1_K^{(n-2)}) \subseteq P \backslash \phi(P)$. This implies that $g(s,u_i,1_K^{(n-2)}) \in P \subseteq (P : Q)$ or $g(u_1^{i-1},s,u_{i+1}^n) \in ( \delta(P) : Q) \subseteq \delta(P : Q)$. Consequently, $(P : Q)$ is an $n$-ary $\phi$-$\delta$-$S$-primary hyperideal of $K$.
\end{proof}
\begin{theorem} \label{13} 
Assume that  $P$ is a strongly  $n$-ary $\phi$-$\delta$-$S$-primary hyperideal of $K$ associated to $s \in S$ with $1_K \in S$ but it is not an $n$-ary $\delta$-$S$-primary hyperideal.  If $(P : s)=(\delta(P) : s)$, then  $g(g(s,rad(\phi(P)),1_K^{(n-2)}),P^{(n-1)}) \subseteq \phi(P)$.
\end{theorem}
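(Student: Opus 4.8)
The plan is to prove the inclusion elementwise. A typical element of $g(g(s,rad(\phi(P)),1_K^{(n-2)}),P^{(n-1)})$ has the form $g(g(s,w,1_K^{(n-2)}),v_1^{n-1})$ with $w \in rad(\phi(P))$ and $v_1^{n-1}\in P$, and by associativity of $g$ together with the identity property of $1_K$ this equals $g(s,g(w,v_1^{n-1}),1_K^{(n-2)})$. So it suffices to show $g(s,g(w,v_1^{n-1}),1_K^{(n-2)}) \in \phi(P)$ for all such $w$ and $v_1^{n-1}$. First I would record, via Theorem \ref{8}, that since $P$ is strongly $n$-ary $\phi$-$\delta$-$S$-primary but not $n$-ary $\delta$-$S$-primary, $g(P^{(n)}) \subseteq \phi(P)$; this is the source of every ``extra'' contraction down to $\phi(P)$.

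The argument then splits on whether $w$ lies in $(P:s)$. If $g(s,w,1_K^{(n-2)}) \in P$, i.e. $w \in (P:s)$, then $g(s,g(w,v_1^{n-1}),1_K^{(n-2)})=g(g(s,w,1_K^{(n-2)}),v_1^{n-1})$ is a value of $g$ on the $n$ elements $g(s,w,1_K^{(n-2)}),v_1,\dots,v_{n-1}$, all of which lie in $P$; hence it lies in $g(P^{(n)}) \subseteq \phi(P)$ and we are done. The interesting case is $w \notin (P:s)$. Here I would invoke the characterization of Theorem \ref{11}: since $w \in K \backslash (P:s)$, either $(P:w)=(\phi(P):w)$ or $(P:w) \subseteq (\delta(P):s)$, and the hypothesis $(P:s)=(\delta(P):s)$ rewrites the second alternative as $(P:w) \subseteq (P:s)$.

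In the first alternative the conclusion is immediate: from $v_i \in P$ and the hyperideal property $g(w,v_1^{n-1}) \in P$, so the element $g(v_1^{n-1},1_K)$ lies in $(P:w)=(\phi(P):w)$, whence $g(w,v_1^{n-1}) \in \phi(P)$, and therefore $g(s,g(w,v_1^{n-1}),1_K^{(n-2)}) \in \phi(P)$ because $\phi(P)$ is a hyperideal. Thus everything reduces to ruling out the second alternative $(P:w) \subseteq (P:s)$. For this I would use that $w \in rad(\phi(P)) \subseteq rad(P)$: choosing (via the radical characterization, Theorem 4.23 of \cite{sorc1}) a minimal $p$ with $g(w^{(p)},1_K^{(n-p)}) \in P$, one peels off copies of $w$ one at a time. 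Indeed $g(w^{(p-1)},1_K^{(n-p+1)}) \in (P:w) \subseteq (P:s)$ gives $g(s,w^{(p-1)},1_K^{(n-p)}) \in P$, and iterating this substitution of an $s$ for a $w$ produces $g(s^{(p)},1_K^{(n-p)}) \in P$; since $1_K \in S$, this element lies in $S$, contradicting $P \cap S = \varnothing$. Hence the second alternative cannot occur, and the first alternative finishes the proof.

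The main obstacle I expect is the bookkeeping in this last peeling step, together with the parallel treatment of the second branch of the radical characterization, namely $r=x(n-1)+1$, where membership is recorded through $g_{(x)}(w^{(r)})$ rather than through $g(w^{(r)},1_K^{(n-r)})$. The substitution argument is identical in spirit, but one must keep careful track of the arities and of the padding by $1_K$ so that each intermediate expression remains a legitimate $n$-ary product; this is routine but is precisely where an indexing slip could creep in. The essential ideas---reducing to $\phi(P)$ through $g(P^{(n)}) \subseteq \phi(P)$, dichotomizing via Theorem \ref{11}, and exploiting $1_K \in S$ to turn a chain of $w$-to-$s$ substitutions into a contradiction with $P \cap S = \varnothing$---are exactly the tools assembled in the preceding results.
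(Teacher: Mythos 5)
Your proposal is correct, and its skeleton matches the paper's proof: the same split on whether $w \in (P:s)$, the same use of Theorem \ref{8} to place the first case inside $g(P^{(n)}) \subseteq \phi(P)$, and the same colon-ideal dichotomy in the second case (you cite Theorem \ref{11} where the paper cites Theorem \ref{10}, but under the hypothesis $(P:s)=(\delta(P):s)$ the two give identical alternatives), with the alternative $(P:w)=(\phi(P):w)$ finishing exactly as in the paper via $P \subseteq (P:w)$. Where you genuinely diverge is in refuting the bad alternative $(P:w) \subseteq (P:s)$. The paper fixes a \emph{minimal} exponent $r$ with $g(u^{(r)},1_K^{(n-r)}) \in \phi(P)$ and then repeatedly re-invokes the elementwise $\phi$-$\delta$-$S$-primary property, doing case analysis at each stage on whether intermediate products lie in $\phi(P)$ or in $\delta(P)$ and using $(P:s)=(\delta(P):s)$ to convert $\delta(P)$-membership back into $P$-membership; this is long and delicate. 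Your argument instead iterates the containment $(P:w)\subseteq(P:s)$ itself: writing $g(s^{(k)},w^{(p-k)},1_K^{(n-p)})=g\bigl(w,g(s^{(k)},w^{(p-k-1)},1_K^{(n-p+1)}),1_K^{(n-2)}\bigr)$, each pass trades one $w$ for one $s$ and ends with $g(s^{(p)},1_K^{(n-p)}) \in P \cap S$ (the hypothesis $1_K \in S$ is precisely what places this padded product in $S$), contradicting $P\cap S=\varnothing$. This version needs no minimality, no $\phi(P)$- or $\delta(P)$-membership bookkeeping, and in fact never uses the primary property inside the induction, only the set containment, so it is shorter and more robust than the paper's computation. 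On the remaining branch of the radical characterization ($r=x(n-1)+1$, recorded through $g_{(x)}$), you defer to an analogous substitution just as the paper does, so you are not less rigorous there.
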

\begin{proof}
Suppose that $P$ is a strongly $n$-ary $\phi$-$\delta$-$S$-primary hyperideal of $K$ associated to $s \in S$ but $P$ is not an $n$-ary $\delta$-$S$-primary hyperideal.  Take any $u \in rad(\phi(P))$. If $u \in (P : s)$, then $g(s,u,1_K^{(n-2)}) \in P$ and so $g(g(s,u,1_K^{(n-2)}),P^{(n-1)}) \subseteq g(P^{(n)}) \subseteq \phi(P)$ by Theorem \ref{8}. Now, let $u \notin (P : s)=(\delta(P) : s)$. This implies that $(P : u) \subseteq (P : s)=(\delta(P) : s)$ or $(P :u) =(\phi(P) : u)$ by Theorem \ref{10}. The first case leads to the following contradiction. Since $u \in rad(\phi(P))$, there exists $r \in \mathbb{N}$ such that $g(u^{(r)},1_K^{(n-r)}) \in \phi(P)$ for $r \leq n$ or $g_{(x)}(u^{(r)})\in \phi(P)$ for $r=x(n-1)+1$. Assume that $r$ is a minimal integer satisfying the possibilities. If $g(u^{(r)},1_K^{(n-r)}) \in \phi(P)$ for $r \leq n$, then $g(u^{(r-1)},1_k^{(n-r+1)}) \in (P : u) \subseteq (P : s)$ which means $g(g(u^{(r-1)},1_K^{(n-r+1)}),s,1_K^{(n-2)})=g(u^{(r-1)},s,1_K^{(n-r)}) \in P$. If $g(u^{(r-1)},s,1_K^{(n-r)})= g(g(s,u,1_K^{(n-2)}),u^{(r-2)},1_K^{(n-r+1)}) \notin \phi(P)$, then we get $g(s,g(s,u,1_K^{(n-2)}),1_K^{(n-2)})=g(s^2,u,1_K^{(n-3)}) \in P$ or $g(s,u^{(r-2)},1_K^{(n-r+1)}) \in \delta(P)$. From $g(s^2,u,1_K^{(n-3)}) \in P$ it follows that $g(s^{(2)},1_K^{(n-2)}) \in (P : u) \subseteq (P :s)$ which means $g(s^{(3)},1_K^{(n-3)}) \in P$ which is impossible. On the other hand, by $g(s,u^{(r-2)},1_K^{(n-r+1)}) \in \delta(P)$ we conclude that  $g(u^{(r-2)}),1_K^{(n-r+2)}) \in (\delta(P) : s)=(P : s)$. Therefore we get the result that $g(s,g(u^{(r-2)},1_K^{(n-r+2)}),1_K^{(n-2)})=g(s,u^{(r-2)},1_K^{(n-r+1)}) \in P \backslash \phi(P)$ as $g(s,u^{(r-1)},1_K^{(n-r)}) \notin \phi(P)$. By continuing the process, we have $g(s,u,1_K^{(n-2)}) \in P$ which is impossible. Then we have $g(u^{(r-1)},s,1_K^{(n-t)}) \in \phi(P)$. Assume that $t$ is a minimal integer satisfying $g(s,u^{(t)},1_K^{(n-t-1)}) \in \phi(P)$. Let $g(g(s,u,1_K^{(n-2)}),u_1^{n-1}) \notin \phi(P)$ for some $u_1^{n-1} \in P$. So $ f(g(s,u^{(t)},1_K^{(n-t-1)}),g(g(s,u,1_K^{(n-2)}),u_1^{n-1}),0^{(m-2)}) \subseteq P \backslash \phi(P)$. Then $g(g(s,u,1_K^{(n-2)}),f(u^{(t-1)},g(u_1^{n-1},1_K),0^{(m-2)}),1_A^{(n-2)}) \subseteq P \backslash \phi(P)$. Since   $P$ is a strongly $n$-ary $\phi$-$\delta$-$S$-primary hyperideal of $K$ 
 and $g(s^2,u,1_K^{(n-3)})=g(s,g(s,u,1_K^{(n-2)}),1_K^{(n-2)}) \notin P$,  $g(s,f(u^{(t-1)},g(u_1^{n-1},1_K),0^{(m-2)}),1_K^{(n-2)}) \subseteq \delta(P)$ which means $f(u^{(t-1)},g(u_1^{n-1},1_K),0^{(m-2)}) \subseteq (\delta(P) : s)=(P : s)$ which implies $g(s,f(u^{(t-1)},g(u_1^{n-1},1_K),0^{(m-2)}),1_K^{(n-2)}) \subseteq P$. Since $g(s,u_1^{n-1}) \in P$, we have $ g(s,u^{(t-1)},1_K^{(n-t)}) \in P \backslash \phi(P)$ which implies $g(s,u,1_K^{(n-2)}) \in P$ which is a contradiction. If $g_{(x)}(u^{(r)}) \in \phi(P)$ for $r=x(n-1)+1$, then by using a similar argument, we get a contradiction. In the second case, we get the result that $g(u,P,1_K^{(n-2)}) \subseteq \phi(P)$ as $P \subseteq (P : u)$. Hence $g(s,g(u,P,1_K^{(n-2)}),P^{(n-2)})=g(g(s,u,1_K^{(n-2)}),P^{(n-1)})\in \phi(P)$. Thus we have $g(g(s,rad(\phi(P)),1_K^{(n-2)}),P^{(n-1)}) \subseteq \phi(P)$.
\end{proof}
As a result of   Theorem \ref{8} and Theorem \ref{13}, we give the following corollariy.
\begin{corollary} \label{14}
Assume that $P$ and $Q$ are strongly  $n$-ary $\phi$-$\delta$-$S$-primary hyperideals of $K$ associated to $s \in S$ with $1_K \in S$ but they are not  $n$-ary $\delta$-$S$-primary hyperideals. If $(P : s)=(\delta(P) : s)$, $(Q: s)=(\delta(Q) : s)$ and $\phi(Q) \subseteq \phi(P)$, then  $g(g(s,Q,1_K^{(n-2)}),P^{(n-1)})) \subseteq \phi(P).$ 
\end{corollary}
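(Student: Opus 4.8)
The plan is to recognize this corollary as a direct combination of Theorem \ref{8} applied to $Q$ and Theorem \ref{13} applied to $P$, linked by the single inclusion $Q \subseteq rad(\phi(P))$. Both $P$ and $Q$ already satisfy all the hypotheses of those two theorems (each is strongly $n$-ary $\phi$-$\delta$-$S$-primary associated to $s$, with $1_K \in S$, fails to be $n$-ary $\delta$-$S$-primary, and satisfies the colon condition $(\cdot : s)=(\delta(\cdot):s)$), so the theorems apply with no extra verification and the whole argument reduces to assembling their conclusions.

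First I would apply Theorem \ref{8} to $Q$. Since $Q$ is strongly $n$-ary $\phi$-$\delta$-$S$-primary associated to $s$ but not $n$-ary $\delta$-$S$-primary, we obtain $g(Q^{(n)}) \subseteq \phi(Q)$. Invoking the radical characterization (Theorem 4.23 in \cite{sorc1}), each $u \in Q$ satisfies $g(u^{(n)}) \in \phi(Q)$, whence $u \in rad(\phi(Q))$; thus $Q \subseteq rad(\phi(Q))$. Next, the hypothesis $\phi(Q) \subseteq \phi(P)$ together with monotonicity of the radical gives $rad(\phi(Q)) \subseteq rad(\phi(P))$, and chaining the two inclusions yields $Q \subseteq rad(\phi(P))$.

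Then I would invoke Theorem \ref{13} for $P$. Because $P$ is strongly $n$-ary $\phi$-$\delta$-$S$-primary associated to $s$ with $1_K \in S$, is not $n$-ary $\delta$-$S$-primary, and satisfies $(P:s)=(\delta(P):s)$, that theorem delivers $g(g(s,rad(\phi(P)),1_K^{(n-2)}),P^{(n-1)}) \subseteq \phi(P)$. Finally, since $Q \subseteq rad(\phi(P))$, monotonicity of the $n$-ary product $g$ gives $g(s,Q,1_K^{(n-2)}) \subseteq g(s,rad(\phi(P)),1_K^{(n-2)})$, and hence $g(g(s,Q,1_K^{(n-2)}),P^{(n-1)}) \subseteq g(g(s,rad(\phi(P)),1_K^{(n-2)}),P^{(n-1)}) \subseteq \phi(P)$, which is exactly the asserted containment.

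The only delicate point, and the step I would double-check, is the passage from $g(Q^{(n)}) \subseteq \phi(Q)$ to $Q \subseteq rad(\phi(Q))$, i.e. the converse reading of the radical characterization. This is precisely the implicit move already made in the corollary following Theorem \ref{8}, where $g(P^{(n)})\subseteq\phi(P)$ is turned into $rad(P)=rad(\phi(P))$, so it is legitimate to reuse it here. Everything else rests on the monotonicity of $rad$ and of $g$, which is routine in this setting, so I expect no serious obstacle.
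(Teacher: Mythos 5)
Your proof is correct and follows essentially the same route as the paper: apply Theorem \ref{8} to $Q$ to obtain $Q \subseteq rad(\phi(Q)) \subseteq rad(\phi(P))$ (using $\phi(Q)\subseteq\phi(P)$), then apply Theorem \ref{13} to $P$ and finish by monotonicity of $g$; like the paper, you never need the hypothesis $(Q:s)=(\delta(Q):s)$. If anything, your write-up is slightly more careful than the paper's, which asserts the equality $g(g(s,Q,1_K^{(n-2)}),P^{(n-1)})=g(g(s,rad(\phi(P)),1_K^{(n-2)}),P^{(n-1)})$ when only the inclusion $\subseteq$ holds and is needed, and which leaves the step from $g(Q^{(n)})\subseteq\phi(Q)$ to $Q\subseteq rad(\phi(Q))$ implicit, whereas you spell it out.
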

\begin{proof}
Suppose that $P$ and $Q$ are strongly  $n$-ary $\phi$-$\delta$-$S$-primary hyperideals of $K$ associated to $s \in S$ with $1_K \in S$ but they are not  $n$-ary $\delta$-$S$-primary hyperideals. Then we get the result that  $Q \subseteq rad(\phi(Q)) \subseteq rad(\phi(P))$ by Theorem \ref{8}. Therefore  $g(g(s,Q,1_K^{(n-2)}),P^{(n-1)})=g(g(s,rad(\phi(P)),1_K^{(n-2)}),P^{(n-1)}) \subseteq \phi(P) $ by Theorem \ref{13}. 
\end{proof}
Let $\phi_S(S^{-1}I)=S^{-1}\phi(I)$ and $\delta_S(S^{-1}I)=S^{-1}\delta(I)$ for every hyperideal $I$ of $K$. Assume that $P$ is a proper hyperideal of $K$ satisfying $\phi(P : u)=(\phi(P) : u)$ and $\delta(P : u)=(\delta(P) : u)$ for every $u \in K$ and suppose that $\delta_S(S^{-1} P) \neq S^{-1}K$ if $S^{-1}P \neq S^{-1}K$. Furthermore, let $\delta(S^{-1} P \cap K) =S^{-1}\delta(P) \cap K$. Then the following theorem holds under these mentioned conditions.
\begin{theorem}\label{15}
Let  $P$ be a proper hyperideal of $K$ such that $P \cap S = \varnothing$, $\phi(P)=(\phi(P) : s)$ for some $s \in S$ and $(\phi(P) : t) \subseteq (\phi(P) : s)$ for every $t \in S$. Then the
following assertions are equivalent:
\begin{itemize}
\item[\rm{(i)}]~ $P$ is an $n$-ary $\phi$-$\delta$-$S$-primary hyperideal of $K$ associated to $s \in S$.
\item[\rm{(ii)}]~ $(P : s)$ is an $n$-ary $\phi$-$\delta$-primary hyperideal of $K$.
\item[\rm{(iii)}]~$S^{-1}P$ is an $n$-ary $\phi_S$-$\delta_S$-primary hyperideal of $S^{-1}K$ and $(P : t) \subseteq (P : s)$ for every $t \in S$. 
 \item[\rm{(iv)}]~ $S^{-1}P$ is an $n$-ary $\phi_S$-$\delta_S$-primary hyperideal of $S^{-1}K$ and $S^{-1}P \cap K =(P : s)$. 
\end{itemize}
\end{theorem}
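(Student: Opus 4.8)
The plan is to prove the equivalences by the cyclic scheme (i)$\Rightarrow$(ii), (ii)$\Rightarrow$(iii), (iii)$\Leftrightarrow$(iv), (iv)$\Rightarrow$(i), after first recording the bookkeeping that the standing hypotheses supply. Setting $u=s$ in the assumptions $\phi(P:u)=(\phi(P):u)$ and $\delta(P:u)=(\delta(P):u)$ gives $\phi((P:s))=(\phi(P):s)$ and $\delta((P:s))=(\delta(P):s)$, and combined with $\phi(P)=(\phi(P):s)$ this yields $\phi((P:s))=\phi(P)$. I would also note that $(P:s)$ is proper, since $(P:s)=K$ forces $s=g(s,1_K^{(n-1)})\in P$, contradicting $P\cap S=\varnothing$; the same computation shows $S^{-1}P\neq S^{-1}K$, whence $\delta_S(S^{-1}P)\neq S^{-1}K$ by hypothesis and $S^{-1}P$ is proper. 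Finally $\phi_S(S^{-1}P)=S^{-1}\phi(P)$ and $\delta_S(S^{-1}P)=S^{-1}\delta(P)$ hold by definition.

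The equivalence (i)$\Leftrightarrow$(ii) is a \emph{de-$S$-ification} resting on the absorbing identity $g(s,g(u_1^n),1_K^{(n-2)})=g(g(s,u_1,1_K^{(n-2)}),u_2^n)$: an element $g(u_1^n)$ lies in $(P:s)$ (respectively in $\phi((P:s))=(\phi(P):s)$) exactly when $g(s,g(u_1^n),1_K^{(n-2)})$ lies in $P$ (respectively in $\phi(P)$). Thus $g(u_1^n)\in(P:s)\backslash\phi((P:s))$ corresponds, after folding $s$ into one factor, to a product in $P\backslash\phi(P)$, and the defining alternatives of the $\phi$-$\delta$-$S$-primary condition on $P$ translate verbatim into $u_i\in(P:s)$ or $g(u_1^{i-1},1_K,u_{i+1}^n)\in\delta((P:s))=(\delta(P):s)$, the $\phi$-$\delta$-primary alternatives for $(P:s)$; here $\phi(P)=(\phi(P):s)$ is precisely what aligns the exceptional sets, and the impossibility of $1_K\in(P:s)$ discards the degenerate branch. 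In parallel, (iii)$\Leftrightarrow$(iv) is a set-level computation: $a\in S^{-1}P\cap K$ iff $g(t,a,1_K^{(n-2)})\in P$ for some $t\in S$, so $S^{-1}P\cap K=\bigcup_{t\in S}(P:t)$; since $s\in S$ gives $(P:s)\subseteq S^{-1}P\cap K$ automatically, one has $S^{-1}P\cap K=(P:s)$ if and only if $(P:t)\subseteq(P:s)$ for all $t\in S$, so the two extra clauses coincide and, sharing the hypothesis that $S^{-1}P$ be $\phi_S$-$\delta_S$-primary, (iii) and (iv) are equivalent.

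The substantive link is the localization bridge. For (iv)$\Rightarrow$(i), given $g(u_1^n)\in P\backslash\phi(P)$ one has $\frac{g(u_1^n)}{1_K}\in S^{-1}P$, and it avoids $\phi_S(S^{-1}P)=S^{-1}\phi(P)$ because otherwise $g(u_1^n)\in(\phi(P):t)\subseteq(\phi(P):s)=\phi(P)$ for some $t$, a contradiction; applying that $S^{-1}P$ is $\phi_S$-$\delta_S$-primary gives $\frac{u_i}{1_K}\in S^{-1}P$ or $\frac{g(u_1^{i-1},1_K,u_{i+1}^n)}{1_K}\in S^{-1}\delta(P)$ for some $i$, and contracting through $S^{-1}P\cap K=(P:s)$ and $S^{-1}\delta(P)\cap K=\delta(S^{-1}P\cap K)=(\delta(P):s)$ returns $g(s,u_i,1_K^{(n-2)})\in P$ or $g(u_1^{i-1},s,u_{i+1}^n)\in\delta(P)$. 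The forward passage (ii)$\Rightarrow$(iii) shows $S^{-1}P$ is $\phi_S$-$\delta_S$-primary by clearing denominators, using that each $\frac{t_i}{1_K}$ is a unit so that $S^{-1}P$-membership depends only on numerators, and then invoking the $\phi$-$\delta$-primary property of $(P:s)$ supplied by (ii).

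The main obstacle is the contraction inclusion $S^{-1}P\cap K\subseteq(P:s)$, equivalently $(P:t)\subseteq(P:s)$ for every $t\in S$: from $g(t,a,1_K^{(n-2)})\in P$ the $\phi$-$\delta$-$S$-primary alternatives may land in $\delta(P)$, or on the $t$-factor, rather than delivering $g(s,a,1_K^{(n-2)})\in P$, so the inclusion is genuinely delicate and is exactly where the technical hypotheses are spent. The branch $g(t,a,1_K^{(n-2)})\in\phi(P)$ is settled by $(\phi(P):t)\subseteq(\phi(P):s)=\phi(P)$, while the expansion branches are governed by the compatibility $\delta(S^{-1}P\cap K)=S^{-1}\delta(P)\cap K$ together with the properness guarantee $\delta_S(S^{-1}P)\neq S^{-1}K$, which keeps the localized expansion from engulfing $S^{-1}K$ and lets the primary alternatives collapse onto the desired inclusion. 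Throughout, the remaining labour is the careful rearrangement of the $n$-ary products and the systematic use of the unit identity $\frac{s}{1_K}\cdot\frac{1_K}{s}=\frac{1_K}{1_K}$ to transfer membership across denominators.
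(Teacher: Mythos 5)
Your proposal follows essentially the same route as the paper's proof: the identical cycle (i)$\Rightarrow$(ii)$\Rightarrow$(iii)$\Rightarrow$(iv)$\Rightarrow$(i), with (i)$\Rightarrow$(ii) obtained by folding $s$ into one factor and using $\phi((P:s))=(\phi(P):s)=\phi(P)$, (ii)$\Rightarrow$(iii) by clearing denominators plus the contraction argument for $(P:t)\subseteq(P:s)$ (where, as in the paper, the branch into the expansion is excluded via $\delta_S(S^{-1}P)\neq S^{-1}K$, i.e.\ $\delta(P)\cap S=\varnothing$, and the $\phi$-branch via $(\phi(P):t)\subseteq(\phi(P):s)=\phi(P)$), (iii)$\Leftrightarrow$(iv) from $S^{-1}P\cap K=\bigcup_{t\in S}(P:t)$, and (iv)$\Rightarrow$(i) by localizing and contracting through $S^{-1}P\cap K=(P:s)$ and $S^{-1}\delta(P)\cap K=\delta(S^{-1}P\cap K)=(\delta(P):s)$. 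The steps you defer to ``careful rearrangement of the $n$-ary products'' are precisely the computations the paper writes out (and glosses at the same delicate spots), so the two arguments coincide.
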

\begin{proof}
(i) $\Longrightarrow$ (ii) It is clear that $(P : s) \neq K$. Let $g(u_1^n) \in (P : s) \backslash \phi(P : s)$ for $u_1^n \in K$. This implies that $g(g(s,u_i,1_K^{(n-2)}),g(u_1^{i-1},1_K,u_{i+1}^n),1_K^{(n-2)})=g(s,g(u_1^n),1_K^{(n-2)}) \in P \backslash \phi(P)$. Since $P$ is an $n$-ary $\phi$-$\delta$-$S$-primary hyperideal of $K$ associated to $s \in S$, we get $g(s^2,u_i,1_K^{(n-2)})=g(s,g(s,u_i,1_K^{(n-2)}),1_K^{(n-2)}) \in P \backslash \phi(P)$ or $g(s,g(u_1^{i-1},1_K,u_{i+1}^n),1_K^{(n-2)}) \in \delta(P)$. Therefore we have $g(s,u_i,1_K^{(n-2)}) \in P$ or $g(s,g(u_1^{i-1},1_K,u_{i+1}^n),1_K^{(n-2)}) \in \delta(P)$. Then we conclude that $u_i \in (P : s)$ or $g(u_1^{i-1},1_K,u_{i+1}^n) \in (\delta(P) : s)=\delta( P : s)$. Thus, $(P : s)$ is an $n$-ary $\phi$-$\delta$-primary hyperideal of $K$.

(ii) $\Longrightarrow$ (iii) Let $G(\frac{u_1}{s_1},\cdots,\frac{u_n}{s_n}) \in S^{-1}P \backslash \phi_S(S^{-1}P)$ for $\frac{u_1}{s_1},\cdots,\frac{u_n}{s_n} \in S^{-1}K$. This means that $\frac{g(u_1^n)}{g(s_1^n)} \in S^{-1}P \backslash \phi_S(S^{-1}P)$ and so $g(t,g(u_1^n),1_K^{(n-2)}) \in P$ for some $t \in S$. If $g(t,g(u_1^n),1_K^{(n-2)}) \in \phi(P)$, then $G(\frac{u_1}{s_1},\cdots,\frac{u_n}{s_n})=\frac{g(t,g(u_1^n),1_K^{(n-2)})}{g(t,g(s_1^n),1_K^{(n-2)})} \in S^{-1}\phi(P)=\phi_S(S^{-1}P)$ which is impossible. Then $g(t,g(u_1^n),1_K^{(n-2)}) \in P \backslash \phi(P)$ which means $g(u_i,g(u_1^{i-1},t,u_{i+1}^n),1_K^{(n-2)})=g(t,g(u_1^n),1_K^{(n-2)}) \in (P : s) \backslash (\phi(P) : s)$. By the hypothesis, we get the result that $u_i \in (P : s)$ or $g(u_1^{i-1},t,u_{i+1}^n) \in \delta(P : s)=(\delta(P) : s)$. Therefore we have $g(s,u_i,1_K^{(n-2)}) \in P$ which means $\frac{u_i}{s_i}=\frac{g(s,u_i,1_K^{(n-2)})}{g(s,s_i,1_K^{(n-2)})} \in S^{-1}P$ or $g(s,t,g(u_1^{i-1},1_K,u_{i+1}^n),1_K^{(n-3)}) \in \delta(P)$ which implies $G(\frac{u_1}{s_1},\cdots,\widehat{\frac{u_i}{s_i}},\cdots,\frac{u_n}{s_n})=\frac{g(s,t,g(u_1^{i-1},1_K,u_{i+1}^n),1_K^{(n-3)})}{g(s,t,g(s_1^{i-1},1_K,s_{i+1}^n),1_K^{(n-3)})} \in S^{-1}\delta(P)=\delta_S(S^{-1}P)$. This shows that $S^{-1}P$ is an $n$-ary $\phi_S$-$\delta_S$-primary hyperideal of $S^{-1}K$. Now, let $t \in S$ and $u \in ( P : t)$. If $u \in (\phi(P) : t)$, then we get $u \in (\phi(P) : t) \subseteq (P : s)$, as needed. Let  $u \notin (\phi(P) : t)$. Then we conclude that $g(t,u,1_A^{(n-2)}) \in ( P : s) \backslash  \phi(P : s)$ as $g(t,u,1_K^{(n-2)}) \in P \backslash \phi(P)$. Since $t \notin \delta(P : s)$, we obtain $u \in (P : s)$. Thus $(P : t) \subseteq (P : s)$.

(iii) $\Longrightarrow$ (iv) Assume that $s \in S$ with $(P : t) \subseteq (P : s)$ for every $t \in S$. We know that $(P : s) \subseteq S^{-1}P \cap K$. Take any $u \in S^{-1}P \cap K$. Since $\frac{u}{1_K}=\frac{v}{w}$ for some $v \in P$ and $w \in S$, we get $g(t,u,1_K^{(n-2)}) \in P$ for some $t \in S$. Hence, we obtain $u \in (P :t)$ and so $u \in (P : s)$. This means $S^{-1}P \cap K \subseteq (P : s)$. Consequently, $S^{-1}P \cap K = (P : s)$.

(iv) $\Longrightarrow$ (i) Let $S^{-1}P$ be an $n$-ary $\phi_S$-$\delta_S$-primary hyperideal of $S^{-1}K$ and $S^{-1}P \cap K =(P : s)$. Assume that $g(u_1^n) \in P\backslash \phi(P)$ for $u_1^n \in K$. Then we have $G(\frac{u_1}{1_K},\cdots,\frac{u_n}{1_K}) \in S^{-1}P \backslash \phi_S(S^{-1}P)$. By the hypothesis, we conclude that $\frac{u_i}{1_K} \in S^{-1}P$ or $G(\frac{u_1}{1_K},\cdots,\widehat{\frac{u_i}{1_K}},\cdots\frac{u_n}{1_K}) \in \delta_S(S^{-1}P)=S^{-1}\delta(P)$ for some $i \in \{1,\cdots,n\}$. In the first case, we get $g(t,u_i,1_K^{(n-2)}) \in P$ for some $t \in S$ and so $u_i=\frac{g(t,u_i,1_K^{(n-2)})}{g(t,1_K^{(n-1)})} \in S^{-1}P \cap K = (P : s)$. It follows that $g(s,u_i,1_K^{(n-2)}) \in P$, as required. In the second case, we have $g(u_1^{i-1},t^{\prime},u_{i+1}^n) \in \delta(P)$ for some $t^{\prime} \in S$ and so $g(u_1^{i-1},1_K,u_{i+1}^n)=\frac{g(t^{\prime},g(u_1^{i-1},1_K,u_{i+1}^n),1_K^{(n-2)})}{g(t^{\prime},1_K^{(n-1)})} \in S^{-1}\delta(P) \cap K$ which implies $g(u_1^{i-1},1_K,u_{i+1}^n) \in (\delta(P) :s)$ by the assumption. Therefore, $g(u_1^{i-1},s,u_{i+1}^n) \in \delta(P)$. Thus, we conclude that $P$ is an $n$-ary $\phi$-$\delta$-$S$-primary hyperideal of $K$ associated to $s \in S$.
\end{proof}
Assume that $(K_1, f_1, g_1)$ and $(K_2, f_2, g_2)$ are two Krasner $(m, n)$-hyperrings. Recall from \cite{d1} that a mapping 
$k : K_1 \longrightarrow K_2$ is called a homomorphism if for all $u^m _1,v^n_ 1 \in K_1$, we have
$ k(f_1(u_1^m)) = f_2(k(u_1),\cdots,k(u_m))$, and 
$ k(g_1(v_1,\cdots, v_n)) = g_2(k(v_1),\cdots,h(v_n))$.  Suppose that $\phi$ and $\psi$ are  reduction functions of hyperideals of $K_1$ and $K_2$, respectively, and also  $\delta$ and $\gamma$ are  expansion functions of hyperideals of $K_1$ and $K_2$, respectively.   Then the homomorphism $k$ is called a $(\delta,\phi)$-$(\gamma,\psi)$-homomorphism if $\phi(k^{-1}(P_2))=k^{-1}(\psi(P_2))$ and $\delta(k^{-1}(P_2))=k^{-1}(\gamma(P_2))$ for each hyperideal $P_2$ of $K_2$. Let $k$ be a $(\delta,\phi)$-$(\gamma,\psi)$-epimorphism from $K_1$ to $K_2$ and let $P_1$ be a hyperideal of $K_1$ with $Ker (k) \subseteq P_1$. It is easy to see that $\psi(k(P_1))=k(\phi(P_1))$ and $\gamma(k(P_1))=k(\delta(P_1))$.
\begin{theorem} \label{homo1}
Let $k: K_1 \longrightarrow K_2$ be a nonzero $(\delta,\phi)$-$(\gamma,\psi)$-epimorphism and $S$ be an $n$-ary multiplicative subset of $K_1$ with $1_{K_1} \in S$. If $P_2$ is an $n$-ary  $\psi$-$\gamma$-$k(S)$-primary hyperideal of $K_2$ associated to $f(s) \in f(S)$, then $k^{-1}(P_2)$ is an $n$-ary $\phi$-$\delta$-$S$-primary hyperideal of $K_1$ associated to $s \in S$. 
\end{theorem}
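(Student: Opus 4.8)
The plan is to put $P_1 := k^{-1}(P_2)$ and verify the defining condition of an $n$-ary $\phi$-$\delta$-$S$-primary hyperideal directly, using $k$ to transport everything between $K_1$ and $K_2$. First I would dispose of the structural preliminaries. Since $k$ is a homomorphism, $P_1 = k^{-1}(P_2)$ is a hyperideal of $K_1$, and it is proper because $k$ is an epimorphism: if $k^{-1}(P_2) = K_1$, then $P_2 = k(K_1) = K_2$, contradicting that $P_2$ is proper. Moreover $k(S)$ is an $n$-ary multiplicative subset of $K_2$ (as $k(g_1(s_1^n)) = g_2(k(s_1),\ldots,k(s_n)) \in k(S)$ whenever every $s_j \in S$), and the disjointness $P_1 \cap S = \varnothing$ follows since any $s_0 \in k^{-1}(P_2) \cap S$ would give $k(s_0) \in P_2 \cap k(S)$, contradicting that the $k(S)$-primary hyperideal $P_2$ is disjoint from $k(S)$.

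The heart of the argument is the pair of identities supplied by the $(\delta,\phi)$-$(\gamma,\psi)$-homomorphism hypothesis, namely $\phi(k^{-1}(P_2)) = k^{-1}(\psi(P_2))$ and $\delta(k^{-1}(P_2)) = k^{-1}(\gamma(P_2))$, which convert the reduction and expansion functions on $K_1$ into those on $K_2$. Starting from $g_1(u_1^n) \in P_1 \backslash \phi(P_1)$, I would apply $k$ and use $k(g_1(u_1^n)) = g_2(k(u_1),\ldots,k(u_n))$ together with $\phi(P_1) = k^{-1}(\psi(P_2))$ to land at $g_2(k(u_1),\ldots,k(u_n)) \in P_2 \backslash \psi(P_2)$. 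Since $P_2$ is $\psi$-$\gamma$-$k(S)$-primary associated to $k(s)$, there is an index $i$ with $g_2(k(u_i),k(s),1_{K_2}^{(n-2)}) \in P_2$ or $g_2(k(u_1),\ldots,k(s),\ldots,k(u_n)) \in \gamma(P_2)$, where $k(s)$ occupies slot $i$.

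It then remains to pull these two alternatives back through $k$. Using $k(1_{K_1}) = 1_{K_2}$ one has $g_2(k(u_i),k(s),1_{K_2}^{(n-2)}) = k(g_1(u_i,s,1_{K_1}^{(n-2)}))$, so the first alternative yields $g_1(u_i,s,1_{K_1}^{(n-2)}) \in k^{-1}(P_2) = P_1$; likewise $g_2(k(u_1),\ldots,k(s),\ldots,k(u_n)) = k(g_1(u_1^{i-1},s,u_{i+1}^n))$, so the second alternative, combined with $k^{-1}(\gamma(P_2)) = \delta(P_1)$, yields $g_1(u_1^{i-1},s,u_{i+1}^n) \in \delta(P_1)$. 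This is exactly the required dichotomy, whence $P_1$ is $n$-ary $\phi$-$\delta$-$S$-primary associated to $s$.

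I expect the main obstacle to be bookkeeping rather than conceptual. Two points deserve care: first, verifying $k(1_{K_1}) = 1_{K_2}$ (where the ``nonzero epimorphism'' hypothesis enters) so that the $1_{K_2}$-padded products on $K_2$ correspond under $k$ to the $1_{K_1}$-padded products on $K_1$; and second, invoking the transport identities $\phi(k^{-1}(P_2)) = k^{-1}(\psi(P_2))$ and $\delta(k^{-1}(P_2)) = k^{-1}(\gamma(P_2))$ in the correct direction, so that membership in $P_1 \backslash \phi(P_1)$ translates into membership in $P_2 \backslash \psi(P_2)$ and the expansion conclusion transfers back. Once these are settled, every remaining step is a direct substitution.
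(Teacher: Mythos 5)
Your proposal is correct and follows essentially the same route as the paper's proof: push $g_1(u_1^n) \in k^{-1}(P_2)\backslash\phi(k^{-1}(P_2))$ forward through $k$, use the transport identities $\phi(k^{-1}(P_2))=k^{-1}(\psi(P_2))$ and $\delta(k^{-1}(P_2))=k^{-1}(\gamma(P_2))$ to land in $P_2\backslash\psi(P_2)$, apply the $\psi$-$\gamma$-$k(S)$-primary property of $P_2$, and pull the resulting dichotomy back to $K_1$. Your additional checks (properness of $k^{-1}(P_2)$, disjointness from $S$, multiplicativity of $k(S)$) are details the paper leaves implicit, but they do not change the argument.
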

\begin{proof}
Let $S$ be an $n$-ary multiplicative subset of $K_1$ with $1_{K_1} \in S$. Since $k$ is a nonzero epimorphism, we conclude that $k(S)$ is an $n$-ary multiplicative subset of $K_2$ with $1_{K_2}=k(1_{K_1}) \in k(S)$.  Let  $P_2$ be an $n$-ary  $\psi$-$\gamma$-$k(S)$-primary hyperideal of $K_2$ associated to $f(s) \in f(S)$.   Assume that $ g_1(u_1^n) \in k^{-1}(P_2) \backslash \phi(k^{-1}(P_2) )$ for $u_1^n \in K_1$. Then $ k(g_1(u_1^n))=g_2(k(u_1),\cdots,k(u_n)) \in P_2 \backslash \psi(P_2)$. Therefore we get  $g_2(k(s),k(u_i),1_{K_2}^{(n-2)})=k(g_1(s,u_i,1_{K_1}^{(n-2)})) \in P_2$  for some $i \in \{1,\cdots,n\}$ or $g_2(k(u_1),\cdots,k(u_{i-1}),k(s),k(u_{i+1}),\cdots,k(u_n))=k(g_1(u_1^{i-1},s,u_{i+1}^n)) \in \gamma(P_2)$.  So we conclude that   $g_1(s,u_i,1_{K_1}^{(n-2)}) \in k^{-1}(P_2)$ or  $g_1(u_1^{i-1},s,u_{i+1}^n) \in k^{-1}(\gamma(P_2))=\delta(k^{-1}(P_2))$. Thus $k^{-1}(P_2)$ is an $n$-ary $\phi$-$\delta$-$S$-primary hyperideal of $K_1$ associated to $s \in S$. 
\end{proof}
\section{$\phi$-$\delta$-$S$-primary hyperideals  in direct product of commutative Krasner $(m,n)$-hyperrings}
Let  $(K_1, f_1, g_1)$ and $(K_2, f_2, g_2)$ be two commutative Krasner $(m,n)$-hyperrings and, $1_{K_1}$ and $1_{K_2}$ be  scalar identities of $K_1$ and $K_2$, respectively. Then   the direct product of  $K_2$ and $K_2$,  denoted by $K_1 \times K_2$,  is a commutative Krasner $(m, n)$-hyperring  with  the $m$-ary hyperoperation
and $n$-ary operation  defined  by

$\hspace{1cm} f_1 \times f_2((u_{1}, v_{1}),\cdots,(u_m,v_m)) = \{(u,v) \ \vert \ \ u \in f_1(u_1^m), v \in f_2(v_1^m) \}$

$\hspace{1cm} g_1 \times g_2 ((w_1,z_1),\cdots,(w_n,z_n)) =(g_1(w_1^n),g_2(z_1^n)) $,\\
for  $u_1^m,w_1^n \in K_1$ and $v_1^m,z_1^n \in K_2$ \cite{rev2}. 
Suppose that  $S_1$ and $S_2$ are $n$-ary multiplicative subsets of $K_1$ and $K_2$, respectively. Then $S_1 \times S_2$ is $n$-ary multiplicative subset of $K_1 \times K_2$. Assume that $\phi_1$ and $\phi_2$ are  reduction functions of hyperideals of $K_1$ and $K_2$, respectively, and also  $\delta_1$ and $\delta_2$ are  expansion functions of hyperideals of $K_1$ and $K_2$, respectively. Then $\hat{\phi}$ and $\hat{\delta}$ defined as $\hat{\phi}(P_1 \times P_2)=\phi_1(P_1) \times \phi_2(P_2)$ and $\hat{\delta}(P_1 \times P_2)=\delta_1(P_1) \times \delta_2(P_2)$ are reduction and expansion functions of hyperideals $K_1 \times K_2$.
\begin{theorem}\label{16}
Let $S_1 $ and $ S_2$ be $n$-ary multiplicative subsets of $K_1$ and $K_2$, respectively, and $P_1$ be a hyperideal of $K_1$. Assume that $\phi_1$ and $\phi_2$ are  reduction functions of hyperideals of $K_1$ and $K_2$, respectively, and also  $\delta_1$ and $\delta_2$ are  expansion functions of hyperideals of $K_1$ and $K_2$, respectively,  such that $\phi_2(K_2) \neq K_2$. Then $P_1 \times K_2$ is an $n$-ary $\hat{\phi}$-$\hat{\delta}$-$S_1 \times S_2$-primary hyperideal of $K_1 \times K_2$ associated to $(s_1,s_2) \in S_1 \times S_2$ if and only if $P_1$ is an $n$-ary $\delta_1$-$S_1$-primary hyperideal of $K_1$ associated to $s_1$ and $P_1 \times K_2$ is an $n$-ary $\hat{\delta}$-$S_1 \times S_2$-primary hyperideal of $K_1 \times K_2$ associated to $(s_1,s_2)$.
\end{theorem}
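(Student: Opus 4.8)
The plan is to prove the two implications separately, after first recording a coordinate reduction that makes the two right-hand conditions equivalent and concentrates all the content in one direction. Since $\delta_2$ is an expansion function we have $K_2 \subseteq \delta_2(K_2) \subseteq K_2$, so $\delta_2(K_2)=K_2$ and hence $\hat\delta(P_1 \times K_2)=\delta_1(P_1)\times K_2$. Consequently, for any tuple with $(g_1\times g_2)((u_1,v_1),\cdots,(u_n,v_n))=(g_1(u_1^n),g_2(v_1^n))\in P_1\times K_2$, the product $(g_1\times g_2)((u_i,v_i),(s_1,s_2),1^{(n-2)})$ lies in $P_1\times K_2$ exactly when $g_1(u_i,s_1,1_{K_1}^{(n-2)})\in P_1$ (the second coordinate lands in $K_2$ automatically), and the $\hat\delta$-alternative holds exactly when $g_1(u_1^{i-1},s_1,u_{i+1}^n)\in\delta_1(P_1)$ (its second coordinate lies in $\delta_2(K_2)=K_2$). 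Reading off first coordinates and noting that the $v_i$ are irrelevant, I would conclude that \emph{$P_1\times K_2$ is $\hat\delta$-$S_1\times S_2$-primary associated to $(s_1,s_2)$ if and only if $P_1$ is $\delta_1$-$S_1$-primary associated to $s_1$}. This shows the two right-hand conjuncts are equivalent, so the theorem reduces to: $P_1\times K_2$ is $\hat\phi$-$\hat\delta$-$S_1\times S_2$-primary if and only if it is $\hat\delta$-$S_1\times S_2$-primary.

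The backward implication is then immediate and is the $S$-analogue of the Remark: a $\hat\delta$-$S_1\times S_2$-primary hyperideal satisfies the required alternative for \emph{every} product lying in $P_1\times K_2$, in particular for those in $(P_1\times K_2)\setminus\hat\phi(P_1\times K_2)$, so it is $\hat\phi$-$\hat\delta$-$S_1\times S_2$-primary. Thus, together with the right-hand hypothesis that $P_1\times K_2$ is $\hat\delta$-$S_1\times S_2$-primary, this direction needs no further work.

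For the forward direction, I would assume $P_1\times K_2$ is $\hat\phi$-$\hat\delta$-$S_1\times S_2$-primary; by the reduction it suffices to prove $P_1$ is $\delta_1$-$S_1$-primary. Take $u_1^n\in K_1$ with $g_1(u_1^n)\in P_1$. The device that uses the hypothesis $\phi_2(K_2)\neq K_2$ is to fix an element $c\in K_2\setminus\phi_2(K_2)$ and pad the second coordinate: consider the tuple $(u_1,c),(u_2,1_{K_2}),\cdots,(u_n,1_{K_2})$, whose product is $(g_1\times g_2)((u_1,c),(u_2,1_{K_2}),\cdots,(u_n,1_{K_2}))=(g_1(u_1^n),g_2(c,1_{K_2}^{(n-1)}))=(g_1(u_1^n),c)$, since $1_{K_2}$ is the scalar identity. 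Because $g_1(u_1^n)\in P_1$ this product lies in $P_1\times K_2$, while $c\notin\phi_2(K_2)$ forces it out of $\phi_1(P_1)\times\phi_2(K_2)=\hat\phi(P_1\times K_2)$. Hence it lies in $(P_1\times K_2)\setminus\hat\phi(P_1\times K_2)$, which is exactly where the $\hat\phi$-$\hat\delta$-$S_1\times S_2$-primary hypothesis applies. Invoking it produces an index $i$ with $(g_1\times g_2)((u_i,v_i),(s_1,s_2),1^{(n-2)})\in P_1\times K_2$ or the $i$-th entry replaced by $(s_1,s_2)$ landing in $\hat\delta(P_1\times K_2)=\delta_1(P_1)\times K_2$; reading off first coordinates gives $g_1(u_i,s_1,1_{K_1}^{(n-2)})\in P_1$ or $g_1(u_1^{i-1},s_1,u_{i+1}^n)\in\delta_1(P_1)$, which is precisely the $\delta_1$-$S_1$-primary condition for $P_1$. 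By the equivalence of the first paragraph, $P_1\times K_2$ is then also $\hat\delta$-$S_1\times S_2$-primary, completing the proof.

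The only genuinely delicate point, and the step I expect to be the main obstacle to get right, is this escape trick: recognizing that padding the second coordinate with an element of $K_2\setminus\phi_2(K_2)$ pushes any product of $P_1\times K_2$ outside $\hat\phi(P_1\times K_2)$, thereby upgrading the \emph{conditional} $\hat\phi$-$\hat\delta$ property into the \emph{unconditional} $\hat\delta$ property. This is exactly where the hypothesis $\phi_2(K_2)\neq K_2$ is indispensable; without it one could not guarantee the escape, and the implication would fail. Everything else is routine bookkeeping with the first-coordinate reduction and the identities $\delta_2(K_2)=K_2$ and $g_2(c,1_{K_2}^{(n-1)})=c$.
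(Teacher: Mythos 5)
Your proof is correct, and for half of the argument it coincides with the paper's: the paper also establishes that $P_1$ is $\delta_1$-$S_1$-primary by padding the second coordinates, taking all of them equal to $1_{K_2}$ (this implicitly uses that $1_{K_2}\notin\phi_2(K_2)$, which holds because a hyperideal containing the scalar identity is the whole hyperring), whereas you pad with an arbitrary $c\in K_2\setminus\phi_2(K_2)$; both are the same escape trick. Where you genuinely diverge is in obtaining the second conjunct, namely that $P_1\times K_2$ is $\hat{\delta}$-$S_1\times S_2$-primary. The paper argues by contradiction: if $P_1\times K_2$ were not $\hat{\delta}$-$S_1\times S_2$-primary, then Theorem \ref{8} would give $g_1\times g_2((P_1\times K_2)^{(n)})\subseteq\hat{\phi}(P_1\times K_2)$, forcing $\phi_2(K_2)=K_2$, a contradiction. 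That appeal is actually delicate, since Theorem \ref{8} is stated for \emph{strongly} $n$-ary $\phi$-$\delta$-$S$-primary hyperideals, while in Theorem \ref{16} the hyperideal $P_1\times K_2$ is only assumed elementwise $\hat{\phi}$-$\hat{\delta}$-primary. Your route instead proves the clean equivalence that $P_1\times K_2$ is $\hat{\delta}$-$S_1\times S_2$-primary if and only if $P_1$ is $\delta_1$-$S_1$-primary, by reading off first coordinates (using $\delta_2(K_2)=K_2$ and the irrelevance of the second coordinates), and then deduces the second conjunct from the first. This both exposes the redundancy in the theorem's right-hand side and avoids the strongly-primary machinery entirely, so your argument is more self-contained and, on this point, more rigorous than the paper's; the paper's version buys only consistency with its general toolkit (Theorems \ref{7} and \ref{8}), which it reuses again in Theorem \ref{17}.
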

\begin{proof}
$\Longrightarrow$ Let $P_1 \times K_2$ be an $n$-ary $\hat{\phi}$-$\hat{\delta}$-$S_1 \times S_2$-primary hyperideal of $K_1 \times K_2$ associated to $(s_1,s_2) \in S_1 \times S_2$. Assume that $g(u_1^n) \in P_1$ for $u_1^n \in K_1$. Therefore we have $g_1 \times g_2((u_1,1_K),\cdots,(u_n,1_K))=(g(u_1^n),1_K) \in P_1 \times K_2 \backslash \hat{\phi}(P_1 \times K_2)$. By the hypothesis, we conclude that $g_1 \times g_2((s_1,s_2),(u_i,1_K),(1_{K_1},1_{K_2})^{(n-2)}) \in P_1 \times K_2$   or  $g_1\times g_2((s_1,s_2),(u_1,1_K),\cdots,\widehat{(u_i,1_K)},\cdots,(u_n,1_K)) \in \hat{\delta}(P_1 \times K_2)$ for some $i \in \{1,\cdots,n\}$. Then we have $(g_1(s_1,u_i,1_{K_1}^{(n-2)}),g_2(s_2,1_{K_2}^{(n-1)})) \in P_1 \times K_2$ which means $g_1(s_1,u_i,1_{K_1}^{(n-2)}) \in P_1$ or $(g_1(u_1^{i-1},s_1,u_{i+1}^n),g_2(s_2,1_{K_2}^{(n-1)})) \in \hat{\delta}(P_1 \times K_2)$ which implies $g_1(u_1^{i-1},s_1,u_{i+1}^n) \in \delta(P_1)$. This shows that $P_1$ is an $n$-ary $\delta_1$-$S_1$-primary hyperideal of $K_1$ associated to $s_1$. Now, assume that  $P_1 \times K_2$ is not an $n$-ary $\hat{\delta}$-$S_1 \times S_2$-primary hyperideal of $K_1 \times K_2$. Then we get the result that $g_1 \times g_2((P_1 \times K_2)^{(n)}) \subseteq \hat{\phi}(P_1 \times K_2)$ by Theorem \ref{8}. So, we conclude that $\phi_2(K_2)=K_2$ which is impossible. Hence,  $P_1 \times K_2$ is an $n$-ary $\hat{\delta}$-$S_1 \times S_2$-primary hyperideal of $K_1 \times K_2$ associated to $(s_1,s_2)$.

$\Longleftarrow$ It is obvious.
\end{proof}
\begin{theorem}\label{17}
 Let $\phi_1$ and $\phi_2$ be  reduction functions of hyperideals of $K_1$ and $K_2$, respectively, and let   $\delta_1$ and $\delta_2$ be expansion functions of hyperideals of $K_1$ and $K_2$, respectively. Suppose that $S_1 $ and $ S_2$ are $n$-ary multiplicative subsets of $K_1$ and $K_2$, respectively, and $P_1$ is a hyperideal of $K_1$.Then $P_1 \times K_2$ is an $n$-ary $\hat{\phi}$-$\hat{\delta}$-$S_1 \times S_2$-primary hyperideal  of $K_1 \times K_2$ associated to $(s_1,s_2) \in S_1 \times S_2$ that is not $\hat{\delta}$-$S_1 \times S_2$-primary if and only if $\phi_2(K_2)=K_2$, $\hat{\phi}(P_1 \times K_2) \neq \varnothing$, and $P_1$ is an $n$-ary $\phi_1$-$\delta_1$-$S_1$-primary hyperideal of $K_1$ associated to $s_1$ that is not $\delta_1$-$S_1$-primary.
\end{theorem}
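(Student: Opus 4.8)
The plan is to prove the two implications of the equivalence separately, exploiting throughout the componentwise descriptions $\hat{\phi}(P_1 \times K_2)=\phi_1(P_1)\times\phi_2(K_2)$ and $\hat{\delta}(P_1 \times K_2)=\delta_1(P_1)\times\delta_2(K_2)$, together with the fact that $\delta_2(K_2)=K_2$ (any expansion function fixes the whole ring) and $g_2(K_2^{(n)})=K_2$. The workhorse of the argument is to probe $P_1 \times K_2$ with test tuples of the form $(u_j,1_{K_2})$ and with the associated element $(s_1,s_2)$: since $g_1\times g_2((u_1,1_{K_2}),\dots,(u_n,1_{K_2}))=(g_1(u_1^n),1_{K_2})$, membership of such a tuple in $P_1\times K_2$ is governed entirely by the first coordinate, while the second coordinate $1_{K_2}$ always lands in $K_2$ but never in $\phi_1(P_1)\times\phi_2(K_2)$ unless $\phi_2(K_2)=K_2$.

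For the forward direction, assume $P_1\times K_2$ is $\hat{\phi}$-$\hat{\delta}$-$S_1\times S_2$-primary associated to $(s_1,s_2)$ but not $\hat{\delta}$-$S_1\times S_2$-primary. First I would establish $\phi_2(K_2)=K_2$ by contraposition of Theorem \ref{16}: were $\phi_2(K_2)\neq K_2$, that theorem would force the $\hat{\phi}$-$\hat{\delta}$-$S_1\times S_2$-primary hyperideal $P_1\times K_2$ to be $\hat{\delta}$-$S_1\times S_2$-primary, contradicting the hypothesis. (Equivalently one can invoke Theorem \ref{8} to get $g_1\times g_2((P_1\times K_2)^{(n)})\subseteq\hat{\phi}(P_1\times K_2)$ and read off the second coordinate.) Given $\phi_2(K_2)=K_2$, the condition $\hat{\phi}(P_1\times K_2)=\phi_1(P_1)\times K_2\neq\varnothing$ is immediate since hyperideals contain $0$. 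To see $P_1$ is $\phi_1$-$\delta_1$-$S_1$-primary associated to $s_1$, take $g_1(u_1^n)\in P_1\backslash\phi_1(P_1)$; then $(g_1(u_1^n),1_{K_2})\in (P_1\times K_2)\backslash\hat{\phi}(P_1\times K_2)$, and applying the hypothesis to the tuple $((u_1,1_{K_2}),\dots,(u_n,1_{K_2}))$ yields, after reading first coordinates, either $g_1(s_1,u_i,1_{K_1}^{(n-2)})\in P_1$ or $g_1(u_1^{i-1},s_1,u_{i+1}^n)\in\delta_1(P_1)$. Finally, $P_1$ fails to be $\delta_1$-$S_1$-primary because, conversely, if it were, a direct coordinatewise check (using $\delta_2(K_2)=K_2$) would show $P_1\times K_2$ is $\hat{\delta}$-$S_1\times S_2$-primary, contradicting the standing assumption.

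For the reverse direction, assume $\phi_2(K_2)=K_2$ and that $P_1$ is $\phi_1$-$\delta_1$-$S_1$-primary associated to $s_1$ but not $\delta_1$-$S_1$-primary. Since $P_1$ is proper and disjoint from $S_1$, the hyperideal $P_1\times K_2$ is proper and disjoint from $S_1\times S_2$. That $P_1\times K_2$ is not $\hat{\delta}$-$S_1\times S_2$-primary is the contrapositive of the observation that a $\hat{\delta}$-$S_1\times S_2$-primary product would, via the test tuples $(u_j,1_{K_2})$, force $P_1$ to be $\delta_1$-$S_1$-primary. To verify $P_1\times K_2$ is $\hat{\phi}$-$\hat{\delta}$-$S_1\times S_2$-primary, take a tuple whose $g_1\times g_2$-product lies in $(P_1\times K_2)\backslash\hat{\phi}(P_1\times K_2)$; because $\phi_2(K_2)=K_2$, the second coordinate is irrelevant and the first coordinate satisfies $g_1(u_1^n)\in P_1\backslash\phi_1(P_1)$. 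Applying the $\phi_1$-$\delta_1$-$S_1$-primary property of $P_1$ and reassembling coordinates (again using $\delta_2(K_2)=K_2$) gives the required conclusion for $(s_1,s_2)$.

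The routine part is the coordinatewise bookkeeping; the one genuinely load-bearing step is extracting $\phi_2(K_2)=K_2$ in the forward direction, where the hypothesis ``not $\hat{\delta}$-$S_1\times S_2$-primary'' is precisely what is needed and where Theorem \ref{16} (or Theorem \ref{8}) must be invoked. A secondary point to keep straight is the asymmetric role of the two functions: the reduction function controls when a product element escapes $\hat{\phi}(P_1\times K_2)$ (hence the necessity of $\phi_2(K_2)=K_2$), whereas the expansion function only requires the automatic identity $\delta_2(K_2)=K_2$.
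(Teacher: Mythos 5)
Your proof is correct and follows essentially the same route as the paper's: the forward direction extracts $\phi_2(K_2)=K_2$ by contradiction with Theorem \ref{16}, and both directions use test tuples of the form $(u_j,1_{K_2})$ together with coordinatewise reassembly (relying on $\delta_2(K_2)=K_2$), exactly as the paper does. The only differences are cosmetic: you justify $\hat{\phi}(P_1\times K_2)\neq\varnothing$ directly from the fact that reduction functions output hyperideals rather than via Theorem \ref{8}, and you spell out the coordinatewise checks (in particular that a $\delta_1$-$S_1$-primary $P_1$ would make $P_1\times K_2$ $\hat{\delta}$-$S_1\times S_2$-primary) which the paper merely asserts or attributes to Theorem \ref{homo1}.
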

\begin{proof}
Let $P_1 \times K_2$ be an $n$-ary $\hat{\phi}$-$\hat{\delta}$-$S_1 \times S_2$-primary hyperideal  of $K_1 \times K_2$ associated to $(s_1,s_2) \in S_1 \times S_2$ that is not $\hat{\delta}$-$S_1 \times S_2$-primary. Then we conclude that $g_1 \times g_2((P_1 \times K_2)^n) \subseteq \hat{\phi}(P_1 \times K_2)$ by Theorem \ref{8}, so $\hat{\phi}(P_1 \times K_2) \neq \varnothing$. Assume that $\phi_2(K_2) \neq K_2$. Then we get the result that $P_1 \times K_2$ is an $n$-ary $\hat{\delta}$-$S_1 \times S_2$-primary hyperideal of $K_1 \times K_2$ associated to $(s_1,s_2)$ by Theorem \ref{16}, which is impossible. Therefore  $\phi_2(K_2) =K_2$. Since $P_1 \times K_2$ is an $n$-ary $\hat{\phi}$-$\hat{\delta}$-$S_1 \times S_2$-primary hyperideal  of $K_1 \times K_2$ associated to $(s_1,s_2)$, $P_1$ is an $n$-ary $\phi_1$-$\delta_1$-$S_1$-primary hyperideal of $K_1$ associated to $s_1$. Now, assume that  $P_1$ is an $n$-ary $\delta_1$-$S_1$-primary hyperideal of $K_1$ associated to $s_1$. Then $P_1 \times K_2$ is an $n$-ary $\hat{\delta}$-$S_1 \times S_2$-primary hyperideal of $K_1 \times K_2$ associated to $(s_1,s_2)$ by Theorem \ref{homo1}, a contradiction. Conseuently, $P_1$ is an $n$-ary $\phi_1$-$\delta_1$-$S_1$-primary hyperideal of $K_1$ associated to $s_1$ that is not $\delta_1$-$S_1$-primary.

$\Longleftarrow$ Let $g_1 \times g_2((u_1,v_1),\cdots,(u_n,v_n))=(g_1(u_1^n),g_2(v_1^n)) \in P_1 \times K_2 \backslash \hat{\phi}(P_1 \times K_2)$ for $(u_1,v_1),\cdots,(u_n,v_n) \in K_1 \times K_2$. Since $\phi_2(K_2)=K_2$, we have $g(u_1^n) \in P_1 \backslash \phi_1(P_1)$. 
By the hypothesis, we get $g(s_1,u_i,1_{K_1}^{(n-2)}) \in P_1$ or $g(u_1^{i-1},s_1,u_{i+1}^n) \in \delta_1(P_1)$. Then we conclude  that $(g_1(s_1,u_i,1_{K_1}^{(n-2)}),g_2(s_2,v_i,1_{K_2}^{(n-2)})) \in P_1 \times K_2$ or  $(g_1(u_1^{i-1},s_1,u_{i+1}^n),g_2(u_1^{i-1},s_2,u_{i+1}^n)) \in \delta_1(P_1) \times \delta_2(K_2)$. In the first case, we have 
$g_1 \times g_2((s_1,s_2),(u_i,v_i),(1_{K_1},1_{K_2})^{(n-2)}) \in P_1 \times K_2$. In the second case, we get  $g_1 \times g_2((u_1,v_1),\cdots,(u_{i-1},v_{i-1}),(s_1,s_2),(u_{i+1},v_{i+1}),\cdots,(u_n,v_n)) \in \hat{\delta}(P_1 \times K_2)$. This means that $P_1 \times K_2$ is an $n$-ary $\hat{\phi}$-$\hat{\delta}$-$S_1 \times S_2$-primary hyperideal  of $K_1 \times K_2$ associated to $(s_1,s_2) \in S_1 \times S_2$. Now, suppose that $P_1 \times K_2$ is  an $n$-ary $\hat{\delta}$-$S_1 \times S_2$-primary hyperideal of $K_1 \times K_2$ associated to $(s_1,s_2)$. This implies that  $P_1$ is an $n$-ary  $\delta_1$-$S_1$-primary hyperideal of $K_1$ associated to $s_1$  which is impossible. Thus, $P_1 \times K_2$ is an $n$-ary $\hat{\phi}$-$\hat{\delta}$-$S_1 \times S_2$-primary hyperideal  of $K_1 \times K_2$ associated to $(s_1,s_2)$ that is not $\hat{\delta}$-$S_1 \times S_2$-primary.
\end{proof}
 \begin{theorem} \label{cart1}
Let $S_1 $ and $ S_2$ be $n$-ary multiplicative subsets of $K_1$ and $K_2$, respectively. Assume that $\phi_j$ is a reduction function of hyperideals of $K_j$ and $\delta_j$ is an expansion  function of hyperideal of $K_j$ for each $j \in \{1,2\}$. 
Suppose that  $P_j$ is a  hyperideals of $K_j$ satisfying $\phi_j (P_j) \neq P_j$  for each $j \in \{1,2\}$ such that if $P_j \neq K_j$, then $\delta_j(P_j) \neq K_j$. Then the following assertions  are equivalent:
 \begin{itemize} 
\item[\rm{(i)}]~ $P_1 \times P_2$ is an $n$-ary $\hat{\phi}$-$\hat{\delta}$-$S_1 \times S_2$-primary hyperideal of $K_1 \times K_2$ associated to $(s_1,s_2) \in S_1 \times S_2$.
\item[\rm{(ii)}]~   $P_2$ is an $n$-ary $\delta_2$-$S_2$-primary hyperideal of $K_2$  associated to $s_2 $ and $P_1=K_1$ or $P_2$ is an $n$-ary $\delta_2$-$S_2$-primary hyperideal of $K_2$  associated to $s_2 $ and $s_1 \in P_1 \cap S_1$ or $P_1$ is an $n$-ary $\delta_1$-$S_1$-primary hyperideal of $K_1$  associated to $s_1 $ and $P_2=K_2$ or $P_1$ is an $n$-ary $\delta_1$-$S_1$-primary hyperideal of $K_1$  associated to $s_1$ and $s_2 \in P_2 \cap S_2$.
\item[\rm{(iii)}]~ $P_1 \times P_2$ is an $n$-ary $\hat{\delta}$-$S_1 \times S_2$-primary hyperideal of $K_1 \times K_2$ associated to $(s_1,s_2) \in S_1 \times S_2$.
\end{itemize} 
\end{theorem}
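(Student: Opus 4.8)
The plan is to prove the equivalences as the cycle (ii) $\Rightarrow$ (iii) $\Rightarrow$ (i) $\Rightarrow$ (ii). The middle implication (iii) $\Rightarrow$ (i) is immediate, since every $\hat{\delta}$-$S_1\times S_2$-primary hyperideal is $\hat{\phi}$-$\hat{\delta}$-$S_1\times S_2$-primary (the defining condition for the latter is only tested on the smaller set $(P_1\times P_2)\setminus\hat{\phi}(P_1\times P_2)$). For (ii) $\Rightarrow$ (iii) I would treat the four disjuncts uniformly via the observation that a \emph{transparent} coordinate satisfies \emph{both} alternatives at every index. For instance, in the case $s_1\in P_1\cap S_1$ with $P_2$ being $\delta_2$-$S_2$-primary: given $(g_1(u_1^n),g_2(v_1^n))\in P_1\times P_2$, I apply the $\delta_2$-$S_2$-primariness of $P_2$ to get an index $i$ with $g_2(s_2,v_i,1_{K_2}^{(n-2)})\in P_2$ or $g_2(v_1^{i-1},s_2,v_{i+1}^n)\in\delta_2(P_2)$; since $s_1\in P_1$, both $g_1(s_1,u_i,1_{K_1}^{(n-2)})\in P_1$ and $g_1(u_1^{i-1},s_1,u_{i+1}^n)\in\delta_1(P_1)$ hold automatically, so at this same $i$ the pair satisfies whichever $\hat{\delta}$-alternative the second coordinate used. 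The cases $P_1=K_1$, $P_2=K_2$, and $s_2\in P_2\cap S_2$ are identical after swapping coordinates.

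For (i) $\Rightarrow$ (ii) I would first fix escape elements $a\in P_1\setminus\phi_1(P_1)$ and $b\in P_2\setminus\phi_2(P_2)$, which exist because $\phi_j(P_j)\neq P_j$. The key computational step is the following extraction: if $s_2\in P_2$, then for an arbitrary tuple $u_1^n\in K_1$ with $g_1(u_1^n)\in P_1$ I test the $\hat{\phi}$-$\hat{\delta}$-$S$-primary hypothesis on $\bigl((u_1,b),(u_2,1_{K_2}),\dots,(u_n,1_{K_2})\bigr)$. Its image $(g_1(u_1^n),b)$ lies in $(P_1\times P_2)\setminus\hat{\phi}(P_1\times P_2)$ precisely because $b\notin\phi_2(P_2)$, and since $s_2\in P_2\subseteq\delta_2(P_2)$ the second coordinate of both alternatives is absorbed at every index; hence the property forces an $i$ with $g_1(s_1,u_i,1_{K_1}^{(n-2)})\in P_1$ or $g_1(u_1^{i-1},s_1,u_{i+1}^n)\in\delta_1(P_1)$, i.e. $P_1$ is $\delta_1$-$S_1$-primary. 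This yields the disjunct ``$P_1$ is $\delta_1$-$S_1$-primary and $s_2\in P_2\cap S_2$''. Symmetrically $s_1\in P_1$ gives ``$P_2$ is $\delta_2$-$S_2$-primary and $s_1\in P_1\cap S_1$'', while $P_1=K_1$ (resp. $P_2=K_2$), where the transparent coordinate is escaped using $a$ (resp. $b$), yields the remaining two disjuncts.

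It therefore remains to show that at least one transparency condition among $\{P_1=K_1,\ P_2=K_2,\ s_1\in P_1,\ s_2\in P_2\}$ must hold, and \emph{this is the step I expect to be the main obstacle}. The intended route is to assume the contrary, namely that both $P_1,P_2$ are proper and both $s_1\notin P_1$, $s_2\notin P_2$, and to show that $P_1\times P_2$ cannot be $\hat{\delta}$-$S_1\times S_2$-primary; Theorem \ref{8} would then force $g_1\times g_2\bigl((P_1\times P_2)^{(n)}\bigr)\subseteq\hat{\phi}(P_1\times P_2)$, i.e. $g_j(P_j^{(n)})\subseteq\phi_j(P_j)$ for each $j$, and this containment is to be played against the escape elements $a,b$ together with the hypothesis ``$P_j\neq K_j\Rightarrow\delta_j(P_j)\neq K_j$''. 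The delicate difficulty is that the defining condition on the product couples the two coordinates through a \emph{common index and a common alternative}: both coordinates must simultaneously satisfy the clause ``$g(\cdot,s,1^{(n-2)})\in P$'', or both the clause ``$\in\delta(P)$''. The slack introduced by $\delta_j(P_j)\supseteq P_j$ — so that any product retaining a factor in $P_j$ lands automatically in $\delta_j(P_j)$ — is exactly what must be defeated when building the witnessing tuple, and managing this coupling, rather than any individual hyperoperation computation, is the crux of the argument.
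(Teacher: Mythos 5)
The parts you completed are correct and coincide with the paper's own proof: (iii) $\Rightarrow$ (i) is immediate, your (ii) $\Rightarrow$ (iii) is exactly the paper's transparent-coordinate computation, and the second stage of your (i) $\Rightarrow$ (ii) (once a transparency condition is available, the other factor is $\delta$-$S$-primary) is the paper's argument in repaired form: the paper escapes $\hat{\phi}$ using $s_2$ itself and asserts, without justification, that $s_2 \notin \phi_2(P_2)$, whereas your escape element $b \in P_2 \setminus \phi_2(P_2)$ makes that step sound. The gap, however, is exactly where you flagged it, and it cannot be closed; your intended route fails twice over. First, Theorem \ref{8} is stated for \emph{strongly} $n$-ary $\phi$-$\delta$-$S$-primary hyperideals, while (i) provides only the element-wise property, so it cannot be invoked (the paper commits this same misuse inside Theorems \ref{16} and \ref{17}). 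Second, and decisively, the needed dichotomy is not a consequence of (i) or (iii): a test element such as $\bigl((a,1_{K_2}),(1_{K_1},1_{K_2})^{(n-2)},(1_{K_1},b)\bigr)$ can yield at best ``$s_1 \in \delta_1(P_1)$ or $s_2 \in \delta_2(P_2)$'', never membership in $P_1$ or $P_2$; and for $n \geq 3$ not even that, because at any middle index the $\hat{\delta}$-alternative holds vacuously, both remaining factors already lying in $\delta_1(P_1)$ and $\delta_2(P_2)$ respectively.

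In fact (i) $\Rightarrow$ (ii) is false as stated, so no argument can complete your plan. Take $m=n=2$, $K_1=K_2=\mathbb{Z}$ (a commutative ring is a commutative Krasner $(2,2)$-hyperring, in the spirit of Examples \ref{ignor} and \ref{madar}), $P_1=P_2=4\mathbb{Z}$, $\phi_j(I)=\{0\}$, $\delta_j(I)=rad(I)$, $S_1=\{2^k \mid k\geq 0\}$, $S_2=\{1\}$, $(s_1,s_2)=(2,1)$. All hypotheses hold: $\phi_j(P_j)=\{0\}\neq P_j$, $\delta_j(P_j)=2\mathbb{Z}\neq \mathbb{Z}$, and $(P_1\times P_2)\cap(S_1\times S_2)=\varnothing$ since $1\notin 4\mathbb{Z}$. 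Then $P_1\times P_2$ is $\hat{\delta}$-$S_1\times S_2$-primary associated to $(2,1)$: if $(u_1u_2,v_1v_2)\in 4\mathbb{Z}\times 4\mathbb{Z}$, some $v_j$ must be even, and at the other index the $\hat{\delta}$-alternative holds because $(2u_j,v_j)\in 2\mathbb{Z}\times 2\mathbb{Z}=\hat{\delta}(P_1\times P_2)$. So (iii), hence (i), holds, while (ii) fails: both $P_j$ are proper, $s_1=2\notin P_1$, $s_2=1\notin P_2$. This also pinpoints where the paper's own proof breaks: from its test element it keeps only the $i=1$ outcomes and concludes ``$s_2\in P_2\cap S_2$ or $s_1\in\delta_1(P_1)\cap S_1=P_1\cap S_1$'', but the asserted equality $\delta_1(P_1)\cap S_1=P_1\cap S_1$ has no justification and fails here ($2\in\delta_1(P_1)\cap S_1$ yet $2\notin P_1$). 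So your instinct that the index-and-alternative coupling is the crux was right, and it is a genuine obstruction, not a missing trick: what is actually provable (for $n=2$, by your own transparent-coordinate argument, which works already under $s_1\in\delta_1(P_1)$) is only the weaker variant of (ii) with $s_j\in\delta_j(P_j)\cap S_j$ in place of $s_j\in P_j\cap S_j$, and for $n\geq 3$ even that weaker dichotomy fails (e.g. $P_1=P_2=4\mathbb{Z}$, $S_1=S_2=\{1\}$ in the $3$-ary setting).
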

\begin{proof}
(i) $\Longrightarrow$ (ii) 
If $P_j=K_j$ for $j \in \{1,2\}$ , then $P_j$ is an $n$-ary $\delta_j$-$S_j$-primary hyperideal of $K_j$  associated to $s_j$, by Theorem \ref{16}. Let $P_j \neq K_j$ for each $j \in \{1,2\}$. Let $u \in P_1$ and $v \in P_2 \backslash \phi_2(P_2)$. Then $g_1 \times g_2((u,1_{K_2}),(1_{K_1},1_{K_2})^{(n-2)},(1_{K_1},v))=(g_1(u,1_{K_2}^{(n-1)}),g_2(1_{K_1}^{(n-1)},v))
\in P_1 \times P_2 \backslash \hat{\phi}(P_1 \times P_2)$. Since $P_1 \times P_2$ is an $n$-ary $\hat{\phi}$-$\hat{\delta}$-$S_1 \times S_2$-primary hyperideal of $K_1 \times K_2$ associated to $(s_1,s_2)$, we obtain $g_1 \times g_2((s_1,s_2),(u,1_{K_2}),(1_{K_1},1_{K_2})^{(n-2)})=(g_1(s_1,u,1_{K_1}^{(n-2)}),g_2(s_2,1_{K_2}^{(n-1)})) \in P_1 \times P_2$  or $g_1 \times g_2((s_1,s_2),(1_{K_1},v),(1_{K_1},1_{K_2})^{(n-2)})=(g_1(s_1,1_{K_1}^{(n-2)}),g_2(s_2,v,1_{K_2}^{(n-2)})) \in \hat{\delta}(P_1 \times P_2)=\delta_1(P_1) \times \delta_2(P_2)$.
Then we get the result that  $s_2 \in P_2 \cap S_2$ or $s_1 \in \delta_1(P_1) \cap S_1=P_1 \cap S_1$. In the first case, we conclude that $P_1 \cap S_1=\varnothing$ as $P_1 \times P_2 \cap S_1 \times S_2 =\varnothing$. Assume that $g_1(u_1^n) \in P_1$ for $u_1^n \in K_1$. Since $s_2 \in P_2 \cap S_2$ and $s_2 \notin \phi_2(P_2)$, we have $g_1 \times g_2((u_1,s_2),(u_2,1_{K_2}),\cdots(u_n,1_{K_n}))=(g_1(u_1^n),g(s_2,1_{K_2}^{(n-2)})) \in P_1 \times P_2 \backslash \hat{\phi}(P_1 \times P_2)$.  Since $P_1 \times P_2$ is an $n$-ary $\hat{\phi}$-$\hat{\delta}$-$S_1 \times S_2$-primary hyperideal of $K_1 \times K_2$ associated to $(s_1,s_2)$,  we conclude that $g_1 \times g_2((s_1,s_2)(u_1,s_2),(1_{K_1},1_{K_2})^{(n-2)}) \in P_1 \times P_2$  or $g_1 \times g_2((s_1,s_2)(u_2,1_{K_2}),\cdots,(u_n,1_{K_2}) \in \hat{\delta}(P_1 \times P_2)$   or  for some $i \in \{2,\cdots,n\}$ we get $g_1 \times g_2((s_1,s_2)(u_i,1_{K_2}),(1_{K_1},1_{K_2})^{(n-2)}) \in P_1 \times P_2$ or $g_1 \times g_2((s_1,s_2)(u_1,s_2),(u_2,1_{K_2}),\cdots,\widehat{(u_i,1_{K_2})},\cdots,(u_n,1_{K_2})) \in \hat{\delta}(P_1 \times P_2)$. It follows that $(g_1(s_1,u_1,1_{K_1}^{(n-2)}),g_2(s_2^{(2)},1_{K_2}^{(n-2)}))  \in P_1 \times P_2$ or $(g_1(s_1,u_2^n),g_2(s_2,1_{K_2}^{(n-1)}) \in \hat{\delta}(P_1 \times P_2)$ or $(g_1(s_1,u_i,1_{K_1}^{(n-2)}),g_2(s_2,1_{K_2}^{(n-2)}) \in P_1 \times P_2$ for some $i \in \{2,\cdots,n\}$ or $(g_1(u_1^{i-1},s_1,u_{i+1}^n),g_2(s_2^{(2)},1_{K_2}^{(n-2)}) \in \hat{\delta}(P_1 \times P_2)$. Then we get the result that $g_1(s_1,u_i,1_{K_1}^{(n-2)}) \in P_1$ or $g_2(u_1^{i-1},s_1,u_{i+1}^n) \in \delta_1(P_1)$. This shows that $P_1$ is an $n$-ary $\delta_1$-$S_1$-primary hyperideal of $K_1$  associated to $s_1 $. In the second case, by using a similar  argument, one can easily prove that $P_2$ is an $n$-ary $\delta_2$-$S_2$-primary hyperideal of $K_2$  associated to $s_2 $.

(ii) $\Longrightarrow$ (iii) Let  $P_j$ be an $n$-ary $\delta_j$-$S_j$-primary hyperideal of $K_j$  associated to $s_j$ and $P_j=K_j$ for $j \in \{1,2\}$. Then $P_1 \times P_2$ is an $n$-ary $\hat{\delta}$-$S_1 \times S_2$-primary hyperideal of $K_1 \times K_2$ associated to $(s_1,s_2)$ by Theorem \ref{homo1}. Now, assume that $P_1$ is an $n$-ary $\delta_1$-$S_1$-primary hyperideal of $K_1$  associated to $s_1$ and $s_2 \in S_2 \cap P_2 $. Let $u_1^n \in K_1$ and $v_1^n \in K_2$ such that $g((u_1,v_1),\cdots,(u_n,v_n))=(g_1(u_1^n),g_2(v_1^n)) \in P_1 \times P_2$. Since $g_1(u_1^n) \in P_1$ and $P_1$ is an $n$-ary $\delta_1$-$S_1$-primary hyperideal of $K_1$ associated to $s_1$, we obtain  $g_1(s_1,u_i,1_{K_1}^{(n-2)}) \in P_1$ or $g_1(u_1^{i-1},s_1,u_{i+1}^n) \in \delta(P_1)$ for some $i \in \{1,\cdots,n\}$. Therefore we get $(g_1(s_1,u_i,1_{K_2}^{(n-2)}),g_2(s_2,v_i,1_{K_1}^{(n-2)})) \in P_1 \times P_2$ or $(g_1(u_1^{i-1},s_1,u_{i+1}^n),g_2(v_1^{i-1},s_2,v_{i+1}^n)) \in \delta_1(P_1) \times P_2 $. This implies that  $g_1 \times g_2((s_1,s_2),(u_i,v_i),(1_{K_1},1_{K_2})^{(n-2)}) \in P_1 \times P_2$ for some $i \in \{1,\cdots,n\}$ or  $g_1 \times g_2((s_1,s_2),(u_1,v_1),\cdots,\widehat{(u_i,v_i)},\cdots,(u_n,v_n)) \in \delta_1(P_1) \times \delta_2(P_2)=\hat{\delta}(P_1 \times P_2) $. Consequently,  $P_1 \times P_2$ is an $n$-ary $\hat{\delta}$-$S_1 \times S_2$-primary hyperideal of $K_1 \times K_2$. Similarly, one can show that $P_1 \times P_2$ is an $n$-ary $\hat{\delta}$-$S_1 \times S_2$-primary hyperideal of $K_1 \times K_2$ where $P_2$ is an $n$-ary $\delta_2$-$S_2$-primary hyperideal of $K_2$  and $s_1 \in S_1 \cap Q_1$.

(iii) $\Longrightarrow$ (i) It is obvious.
\end{proof}
\section{conclusion}
Since primary hyperideals are a crucial concept in the study of hyperstructures, extending the traditional notion of primary ideals to the context of  hyperrings, numerous researchers have investigated their extensions, yielding diverse findings. Some of the generalizations addressed in the introduction offer insight on our work. Our
purpose was to devise a broader notion encompassing $S$-primary hyperideals. To accomplish this, we constructed a hyperideal  disjoin with an $n$-ary multiplicative set and use reduction and expansion functions. This hyperideal was called $n$-ary $\phi$-$\delta$-$S$-primary. The properties of   the $n$-ary $\phi$-$\delta$-$S$-primary hyperideals  were thoroughly examined and their relations with other classes of hayperideals were explored. Furthermore, the
behavior of $n$-ary $\phi$-$\delta$-$S$-primary hyperideals is investigated on direct product of Krasner $(m,n)$-hyperrings.

As a new research subject, we suggest the notion of $\phi$-$\delta$-$S$-primary subhypermodules on Krasner $(m,n)$-hyperrins.



\begin{thebibliography}{99}


\bibitem{sorc1}
R. Ameri, M. Norouzi, Prime and primary hyperideals in Krasner $(m,n)$-hyperrings, {\it European Journal of Combinatorics}, (2013) 379-390.

\bibitem{amer2}
R. Ameri, A. Kordi, S. Sarka-Mayerova, Multiplicative hyperring of fractions and coprime hyperideals, {\it An. Stiint. Univ. Ovidius Constanta Ser. Mat.,} {\bf 25} (1) (2017) 5-23. 




\bibitem{mah3}
M. Anbarloei, Unifing the prime and primary hyperideals under one frame in a Krasner $(m,n)$-hyperring, {\it Comm. Algebra}, {\bf 49} (2021) 3432-3446.

\bibitem{mah4} 
M. Anbarloei, A study on a generalization of the $n$-ary prime hyperideals
in Krasner $(m,n)$-hyperrings, {\it Afrika Matematika,} {\bf 33} (2021) 1021-1032. 

\bibitem{mah5} 
M. Anbarloei, Krasner (m, n)-hyperring of fractions, {\it Jordan Journal of Mathematics and Statistic,} In press.


\bibitem{mah6} 
M. Anbarloei, Extensions of $n$-ary prime hyperideals via an $n$-ary multiplicative subset  in a Krasner $(m,n)$-hyperring, {\it Filomat,} {\bf 37}(12) (2023) 3857-3869.


\bibitem{mah8}
M. Anbarloei, On weakly $S$-primary hyperideals, (2024) Submitted.

\bibitem{Anvariyeh} 
 S.M. Anvariyeh, S. Mirvakili, B. Davvaz, Fundamental relation on $(m, n)$-hypermodules over $(m,n)$-hyperrings, {\it Ars Combin.}, {\bf 94} (2010) 273-288.



\bibitem{asadi}
A. Asadi, R. Ameri, Direct limit of Krasner (m,n)-hyperrings, {\it Journal of Sciences}, {\bf 31} (1) (2020) 75-83.







\bibitem{s3}
S. Corsini, V. Leoreanu, Applications of hyperstructure theory, {\it Advances in Mathematics}, vol. 5, Kluwer Academic Publishers, (2003). 

\bibitem{davvaz1}
B. Davvaz, V. Leoreanu-Fotea, Hyperring Theory and Applications, {\it International Academic Press, Palm Harbor, USA}, (2007).


\bibitem{s9}
B. Davvaz, T. Vougiouklis, $n$-ary hypergroups, {\it Iran. J. Sci. Technol.,} {\bf 30} (A2) (2006) 165-174. 

\bibitem{davvazz}
B. Davvaz, G. Ulucak, U. Tekir, Weakly $(k, n)$-absorbing (Primary) hyperideals of a Krasner $(m,n)$-hyperring, {\it Hacet. J. Math. Stat.,} {\bf 52}(5) (2023) 1229-1238.




(2017) 193-198. 



\bibitem{rev2}
K. Hila, K. Naka, B. Davvaz, On $(k,n)$-absorbing
hyperideals in Krasner $(m,n)$-hyperrings, {\it Quarterly Journal of
Mathematics}, {\bf 69}
(2018) 1035-1046.






\bibitem{l2}
V. Leoreanu-Fotea, B. Davvaz, $n$-hypergroups and binary relations, {\it European J. Combin.,} {\bf 29} (2008) 1027-1218.

\bibitem{l3}
V. Leoreanu-Fotea, B. Davvaz, Roughness in $n$-ary hypergroups, {\it Inform. Sci.,} {\bf 178} (2008) 4114-4124. 


\bibitem{ma}
X. Ma, J. Zhan, B. Davvaz, Applications of rough soft sets to Krasner $(m,n)$-hyperrings and corresponding decision making methods, {\it Filomat}, {\bf 32} (2018) 6599-6614.


\bibitem{s1} 
F. Marty, Sur une generalization de la notion de groupe, {\it $8^{th}$ Congress Math. Scandenaves, Stockholm,} (1934) 45-49.



\bibitem{d1}
S. Mirvakili, B. Davvaz, Relations on Krasner $(m,n)$-hyperrings, {\it European J. Combin.,} {\bf 31}(2010) 790-802.

\bibitem{cons}
S. Mirvakili, B. Davvaz, Constructions of $(m,n)$-hyperrings, {\it Matematicki Vesnik,} {\bf 67} (1) (2015) 1-16.


\bibitem{nour}
M. Norouzi, R.Ameri, V. Leoreanu-Fotea, Normal hyperideals in Krasner $(m,n)$-hyperrings, {\it An. St. Univ. Ovidius Constanta} {\bf 26} (3) (2018) 197-211.

\bibitem{davvaz2}
S. Omidi, B. Davvaz, Contribution to study special kinds of hyperideals in ordered semihyperrings, {\it J. Taibah Univ. Sci.}, {\bf 11} (2017) 1083-1094.

\bibitem{rev1}
S. Ostadhadi-Dehkordi, B. Davvaz, A Note on
Isomorphism Theorems of Krasner $(m,n)$- hyperrings, {\it Arabian Journal of
Mathematics,} {\bf 5} (2016) 103-115.






\bibitem{s4}
T. Vougiouklis, Hyperstructures and their representations, {\it Hadronic Press Inc., Florida, }(1994).




\bibitem{s10}
M.M. Zahedi, R. Ameri, On the prime, primary and maximal subhypermodules, {\it Ital. J. Pure Appl. Math.,} {\bf 5} (1999) 61-80.

\bibitem{jian}
J. Zhan, B. Davvaz, K.P. Shum, Generalized fuzzy hyperideals of hyperrings, {\it Computers and Mathematics with Applications}, {\bf 56} (2008) 1732-1740.
\end{thebibliography}
\end{document}